\documentclass[10pt, oneside, dvipsnames]{article}      
\usepackage[a4paper, margin=1in]{geometry}                       
\usepackage{stmaryrd}
\SetSymbolFont{stmry}{bold}{U}{stmry}{m}{n}
\usepackage{titlesec}
\usepackage[parfill]{parskip}           
\usepackage{graphicx}
\usepackage{amssymb}
\usepackage{amsmath}
\usepackage{amsthm}
\usepackage{amsfonts}
\usepackage[shortlabels]{enumitem}
\usepackage{xcolor}
\usepackage[noadjust]{cite}
\usepackage{stackengine}
\usepackage{mathrsfs}
\usepackage{stmaryrd}
\usepackage{mathtools}
\usepackage{bm}
\usepackage{bbm}
\usepackage{bold-extra}
\usepackage[normalem]{ulem}
\usepackage{enumitem}
\usepackage{hyperref}
\hypersetup{
    colorlinks=true,
    linkcolor=blue,
    filecolor=magenta,
    urlcolor=cyan,
    citecolor=RubineRed,
    pdftitle={Fluctuations in QUE at the Spectral Edge},
    pdfpagemode=FullScreen,
    }

\urlstyle{same}
\allowdisplaybreaks
\newcommand{\defeq}{=} 
\linespread{1.3}
\numberwithin{equation}{section}
\usepackage{comment}
\usepackage{cleveref}

\setlist[itemize]{leftmargin=*}
\setlist[enumerate]{leftmargin=*}

\newtheorem{theorem}{Theorem}[section]
\newtheorem{lemma}[theorem]{Lemma}
\newtheorem{proposition}[theorem]{Proposition}

\newtheorem{corollary}[theorem]{Corollary}

\newtheorem{definition}[theorem]{Definition}
\newtheorem{remark}[theorem]{Remark}

\def\I{\mathcal I}

\def\Im{\operatorname{Im}}
\def\Re{\operatorname{Re}}
\newcommand{\Tr}{\operatorname{Tr}}
\newcommand{\matn}{\operatorname{Mat}_N}

\def\e{\bm e}
\def\d{\mathrm d}
\def\m{m_\mathrm{sc}}
\def\q{\bm q}
\def\x{\bm x}
\def\y{\bm y}
\def\E{\mathbb E}
\def\Od{O_\prec(N^{-D})}
\def\R{\mathbb R}
\def\N{\mathbb N}
\def\P{\mathbb P}

\def\leq{\leqslant}
\def\le{\leqslant}
\def\geq{\geqslant}
\def\ge{\geqslant}
\def\err{\mathrm{err}}
\def\even{\mathsf{even}}
\def\odd{\mathsf{odd}}

\def\tilde{\widetilde}
\def\mathbf{\bm}
\def\bar{\overline}
\def\ra{R^{(a)}}
\def\rab{\bar R^{(a)}}

\def\rr{\left(\ra\rab\right)}
\def\rar{\left(\ra A\rab\right)}



\def\phi{\varphi}
\def\epsilon{\varepsilon}
\def\trans{\mathsf{T}}
\def\Tr{\operatorname{Tr}}

\newcommand{\one}{{\boldsymbol{1}}}

\newcommand{\iu}{\mathrm{i}}
\newcommand{\Oparity}[2]{O_{\prec,\mathsf{#1}}(#2)}

\def\eeb{\normalcolor}

\newcommand{\coloneqq}{=}

\title{\scshape{Fluctuations in Quantum Unique Ergodicity at the Spectral Edge}}
\author{L. \textsc{Benigni}\\[-.7ex]\vspace{-0.25cm}\footnotesize{\it{Universit\'e de Montr\'eal}}\\\footnotesize{\it{lucas.benigni@umontreal.ca}}
\and N. \textsc{Chen}\\[-.7ex]\vspace{-0.25cm}\footnotesize{\it{University of Chicago}}\\\footnotesize{\it{nixiachen@uchicago.edu}} 
\and P. \textsc{Lopatto}\\[-.7ex]\vspace{-0.25cm}\footnotesize{\it{Brown University}}\\\footnotesize{\it{\text{patrick\textunderscore lopatto@brown.edu}}}
\and X. \textsc{Xie}\\[-.7ex]\vspace{-0.25cm}\footnotesize{\it{Brown University}}\\\footnotesize{\it{xiaoyu\textunderscore xie@brown.edu}}
}
 \date{}                           
\titleformat{\paragraph}[runin]{\itshape\normalsize}{\theparagraph}{}{}
\titleformat{\subparagraph}[runin]{\itshape\normalsize}{\theparagraph}{0em}{}
\titleformat{\section}[block]{\normalfont\filcenter}{\Large\thesection .}{.7em}{\Large\scshape}
\titleformat{\subsection}[runin]{\normalfont}{\large \bf \thesubsection .}{.5em}{\large\bf}[.]
\titleformat{\subsubsection}[runin]{\normalfont}{\bf \thesubsubsection .}{.5em}{\bf}[.]

\begin{document}
\maketitle

\begin{abstract}
    We study the eigenvector mass distribution of an $N\times N$ Wigner matrix on a set of coordinates $\mathcal{I}$ satisfying $\vert \mathcal{I}\vert \geqslant cN$ for some constant $c>0$. For eigenvectors corresponding to eigenvalues at the spectral edge, we show that the sum of the mass on these coordinates converges to a Gaussian in the $N\to\infty$ limit, after a suitable rescaling and centering. 
    More generally, we establish a central limit theorem for observables of the form $\langle \bm u,A\bm u\rangle$, where $\bm u$ is an edge eigenvector and $A$ is a deterministic matrix with $\Tr(A^2) \ge c N$. 
    The proof proceeds by a two moment matching argument. We directly compare edge eigenvector observables of an arbitrary Wigner matrix to those of a Gaussian matrix, which may be computed explicitly.
\end{abstract}

\section{Introduction}

Quantum Unique Ergodicity (QUE) refers to the observation that for the quantization of a 
chaotic dynamical system, the eigenstates of the Hamiltonian become uniformly distributed in phase space in the high-energy limit.  This phenomenon has been intensely studied by both physicists and mathematicians, and we refer the reader to \cite{sarnak2012recent} for a survey.
Recently, a number of works have investigated QUE, and other closely related principles, in the context of Wigner random matrices \cite{bourgade2017eigenvector,cipolloni2022normal, adhikari2023eigenstate,bao2021equipartition,cipolloni2023gaussian,BenLop22QUE,benigni2022fluctuations, cipolloni2022thermalisation, cipolloni2022rank}. Because such matrices are the simplest class of chaotic quantum Hamiltonians, they form a natural testbed for the study of these ideas. 

We recall that a Wigner matrix is a symmetric matrix $H=\{h_{ij} \}_{1 \le i, j \le N}$ of real random variables with mean zero and variance $N^{-1}$, such that the upper triangular elements $\{h_{ij}\}_{1\le i\leq j \le N}$ are independent. 
The eigenvectors of Wigner matrices are \emph{delocalized}, meaning that their mass is spread approximately uniformly among their entries. The simplest manifestation of delocalization is the high-probability bound 
\begin{equation}\label{intro:deloc}
  \sup_{\alpha \in \llbracket 1, N \rrbracket} N \langle  \bm{q}_\alpha, \bm{u} \rangle^2 \le  N^{ \epsilon},
\end{equation}
which holds for any $\epsilon>0$, eigenvector $\bm{u}$, and orthonormal basis $( \bm{q}_\alpha )_{\alpha =1}^N$, for sufficiently large $N$ \cite{alex2014isotropic}.\footnote{All eigenvectors in this work are normalized so that $\| \bm{u} \|_2 =1$.} 

QUE for Wigner matrices asserts a more refined form of delocalization, concerning the equidistribution of the eigenvector coordinates. 
Let $\mathcal I \subset \llbracket 1, N \rrbracket$ be any deterministic subset of indices. Then for any eigenvector $\bm u$, we have the high-probability bound
\begin{equation}\label{intro:que}
\left|\sum_{\alpha\in\I}\left\langle \q_\alpha,\bm u\right\rangle^2-\frac{|\I|}{N}\right| \le 
\frac{N^\epsilon\sqrt{|\mathcal{I}|}}{N}.
\end{equation}
A weaker version of this claim was first established in \cite{bourgade2017eigenvector}, and the optimal error term stated in \eqref{intro:que} was shown in \cite{BenLop22QUE, cipolloni2022rank}. 

In this article, we consider the fluctuations around the leading-order term identified in \eqref{intro:que}. Based on explicit calculations with Gaussian random matrices \cite[Theorem~2.4]{o2016eigenvectors}, we expect that 
\begin{align}\label{i:CLT}
\begin{split}
    \sqrt{\frac{ N^3}{2|\I|\big(N-|\I|\big)}}\left(\sum_{\alpha \in \I}\left\langle \q_\alpha,\bm u \right\rangle^2 -\frac{|\I|}{N}\right) \rightarrow \mathcal{N}(0,1),
\end{split}
\end{align}
with convergence in distribution, for all Wigner matrices, whenever $| \mathcal I | \gg 1$. We observe that when $|\mathcal \I|\ll N$, the summands act as independent Gaussians, while correlations arising from the condition that $\|\bm{u} \|_2 =1$  are present when $| \mathcal{I} |$ is of order $N$. It has been shown in the recent work 
\cite{cipolloni2022rank} that \eqref{i:CLT} is true for eigenvectors $\bm{u}$ corresponding to eigenvalues in the bulk of the spectrum in the following sense. Label the eigenvalues of $H$ in increasing order, $\lambda_1 \le \lambda_2 \le \dots \le \lambda_N$, and let $i = i_N$ be a sequence of indices such that $\min(i, N-i) > cN$, for some constant $c>0$ and all $N \in \mathbb{N}$. Then \eqref{i:CLT} holds for the eigenvectors $\bm{u}_i$.


At the edge of the spectrum, previous results are less complete. In \cite{BenLop22QUE}, it was shown that \eqref{i:CLT} holds for \emph{any} eigenvector $\bm{u}$, if $N^\tau \le | \mathcal I | \le N^{1-\tau}$ for some $\tau > 0$. However, this leaves open the case with $| \mathcal {I} |$ proportional to $N$, where correlations between eigenvector entries arise. This case is of particular interest since it  parallels the original QUE conjecture, which concerned the mass of eigenstates on subsets containing a constant fraction of phase space. In this article, we address this regime and show that \eqref{i:CLT} holds for any $\mathcal I$ such that $|\mathcal I | \ge  N^{1-c}$ and any eigenvector $\bm{u}_i$ such that $\min(i, N-i) \le N^{1-\tau}$, where $\tau>0$ is an arbitrary constant, and $c(\tau)>0$ is a small constant depending on $\tau$. This completes the characterization of fluctuations in QUE for Wigner matrices at the spectral edge. 


While our primary interest is the quantum unique ergodicity observable in \eqref{i:CLT}, our main result goes further and establishes a central limit theorem for observables of the form $\langle \bm u,A\bm u\rangle$, where $\bm u$ is an edge eigenvector and $A$ satisfies $\Tr(A) = 0$ and $\Tr (A^2) \ge N^{1- c}$. The statement \eqref{i:CLT} follows from this more general claim after taking $A$ to be a projection onto the set $\{\bm q_{\alpha}\}_{\alpha \in \mathcal I}$.

\subsection{Main Results}
We first define Wigner matrices. 
\begin{definition}[Wigner matrix]\label{def:wigner}
A Wigner matrix $H=H_N = \{ h_{ij}\}_{1\le i,j \le N}$ is a real symmetric or complex Hermitian $N \times N$ matrix whose upper triangular elements $\left\{h_{i j}\right\}_{1\le i \leqslant j \le N}$ are independent random variables that satisfy 
\begin{align}\label{hij}
\begin{split}
    \E [h_{ij}]=0,\qquad \E \big[|h_{ij}|^2\big]=\frac{1+\delta_{ij}}{N}\,.
\end{split}
\end{align}
In the complex case, we additionally suppose that $\E[h^2_{ij}] = 0$. 
Further, we suppose that the normalized entries have finite moments, uniformly in $N$, $i$, and $j$, in the sense that for all $p \in \mathbb{N}$ there exists a constant $\mu_p$ such that
\begin{align}\label{finite-moment}
\begin{split}
    \mathbb{E}\left[\left|\sqrt{N} h_{i j}\right|^p \right]\leqslant \mu_p 
\end{split}
\end{align}
for all $N, i$, and $j$.
\end{definition}
\begin{remark}
In \Cref{def:wigner}, we assumed that the diagonal entries have variance $ 2N^{-1}$. This assumption is made for convenience, and our results still hold if the diagonal variances are replaced by any constant multiple of $N^{-1}$. 
More precisely, the second condition in \eqref{hij} could be relaxed to require only that 
\begin{equation} \mathbb{E}\big[|h_{i j}|^2 \big] = \frac{1 + (d-1) \delta_{ij} }{N}
\end{equation}
for some constant $d > 0$. The modifications to the proofs in this case are straightforward, and we omit them for brevity.
\end{remark}

Our main theorem is the following central limit theorem for Wigner matrix eigenvectors. A matrix $A$ is said to be traceless if $\Tr(A)=0$.

\begin{theorem}[Central Limit Theorem]\label{thm:CLT}
Let $H$ be a Wigner matrix and fix $\tau\in(0,1)$. Then there exists $\delta=\delta(\tau)\in(0,1)$ such that the following holds.
Let $A=A_N\in\mathbb R^{N\times N}$ be a deterministic sequence of traceless matrices such that $A =A^*$, $\|A\|\leq1$, and $\Tr(A^2)\geq N^{1-\delta}$.
Let $\ell=\ell_N \in \llbracket 1, N^{1-\tau} \rrbracket\cup\llbracket N-N^{1-\tau},N\rrbracket$ be a deterministic sequence of indices, and let $\bm{u}=\bm{u}_{\ell}^{(N)}=(u(1),\ldots,u(N))$ be the corresponding sequence of $\ell^2$-normalized eigenvectors of $H$. 
Then
\begin{align}\label{CLT}
\begin{split}
    \sqrt{\frac{\beta N^2}{2\Tr(A^2)}}\left\langle \bm u,A\bm u\right\rangle\rightarrow \mathcal N(0,1),
\end{split}
\end{align}
with convergence in distribution. Here $\mathcal{N}(0,1)$ is a standard real Gaussian random variable; we take $\beta=1$ if $H$ is real symmetric, or $\beta=2$ if it is complex Hermitian.
\end{theorem}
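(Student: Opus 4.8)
The plan is to establish convergence of moments of the normalized linear statistic
$\mathcal{X}_N \defeq \sqrt{\beta N^3/(2|\I|(N-|\I|))}\big(\sum_{\alpha\in\I}\langle\q_\alpha,\bm u_{\ell_N}\rangle^2 - |\I|/N\big)$
to those of a standard Gaussian, i.e.\ $\E[\mathcal{X}_N^{2k}]\to(2k-1)!!$ and $\E[\mathcal{X}_N^{2k+1}]\to 0$ for every fixed $k$. Since the projection $P = \sum_{\alpha\in\I}\q_\alpha\q_\alpha^\trans$ is a deterministic projection of rank $|\I|$, we have $\sum_{\alpha\in\I}\langle\q_\alpha,\bm u\rangle^2 = \langle\bm u, P\bm u\rangle$, and the centered quantity is $\langle\bm u, (P - \frac{|\I|}{N})\bm u\rangle$. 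The first step is therefore to reduce the analysis to the Green's function: write $\langle\bm u_{\ell_N}, A\bm u_{\ell_N}\rangle$ in terms of the resolvent $G(z) = (H-z)^{-1}$ via contour integration around the relevant eigenvalue $\lambda_{\ell_N}$, so that moments of $\mathcal{X}_N$ become multi-contour integrals of expectations of products of $\Im\langle\bm q, G(z)\bm q\rangle$-type quantities (equivalently, of $\langle\bm x, G\bm y\rangle$ for controlled test vectors), with spectral parameters $z_j = E_j + \iu\eta_j$ where $\eta_j$ is taken down to a scale slightly above $N^{-1}$. This is the standard eigenvector-to-resolvent dictionary used in \cite{bourgade2017eigenvector, BenLop22QUE, cipolloni2022rank}.

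The core of the argument is a computation of the joint moments of the resolvent observables. The two established routes are (i) the eigenstate moment flow / Dyson Brownian motion method of \cite{bourgade2017eigenvector}, coupled with a Gaussian comparison to remove the added Gaussian component, and (ii) the resolvent-based approach via the characteristic-function/flow method of \cite{cipolloni2022rank}. I would follow the second: set up an interpolating Ornstein--Uhlenbeck flow $H_t$ with a small Gaussian component added, prove that along the flow the relevant $2k$-th moment functional satisfies an approximate closed evolution equation whose stable fixed point is the Gaussian moment $(2k-1)!!$, and use a separate continuity/comparison estimate (a Lindeberg-type swap on the entries, using the finite-moment assumption \eqref{finite-moment}) to transfer the result from $H_t$ back to a general Wigner matrix $H$. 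The key structural inputs are (a) the local law for $G$ valid at the edge down to scale $\eta \gg N^{-1}$, including the behavior of $\m(z)$ and the isotropic/averaged error bounds near $E = \pm 2$; (b) rigidity of the edge eigenvalue $\lambda_{\ell_N}$, which localizes the contour integral; and (c) eigenvector delocalization \eqref{intro:deloc}, which controls the contributions of spectral scales below $\eta$. The variance normalization $|\I|(N-|\I|)$ and the $\beta$-dependence will emerge from the leading term of the two-resolvent expectation $\E\langle\bm q_\alpha, \Im G\, \bm q_\alpha\rangle\langle\bm q_{\alpha'},\Im G\,\bm q_{\alpha'}\rangle$; the $-|\I|/N$ centering cancels the diagonal $\alpha=\alpha'$ leading term and what remains, after summing over $\alpha,\alpha'\in\I$ and using $\sum_\alpha\q_\alpha\q_\alpha^\trans = P$, $\Tr P = |\I|$, $\Tr P^2 = |\I|$, produces exactly $2|\I|(N-|\I|)/(\beta N^3)$.

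The main obstacle will be that we are working at the \emph{spectral edge} rather than the bulk, so the density of states vanishes like a square root and the local eigenvalue spacing is $N^{-2/3}$ rather than $N^{-1}$. This changes the effective microscopic scale, degrades the local law error terms (factors of $\eta^{-1}$ become more dangerous near the edge since $\Im\m$ is small there), and means the contour integral for $\langle\bm u_{\ell_N}, A\bm u_{\ell_N}\rangle$ must be taken at a scale matched to $N^{-2/3}$ rather than $N^{-1+\epsilon}$; one cannot simply quote the bulk estimates of \cite{cipolloni2022rank}. Handling the regime $\ell_N \le N^{1-\tau}$ requires edge rigidity with the sharper $\ell_N^{-1/3}N^{-2/3}$-type fluctuation scaling and edge versions of the two-resolvent local law; this is presumably the source of the restriction $|\I|\ge N^{1-\delta}$ with $\delta = \delta(\tau)$ small, since the error terms from the edge local law must be beaten by the $\sqrt{|\I|}/N$ size of the fluctuation, forcing $|\I|$ to be nearly macroscopic. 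A secondary technical point is that the test vectors $\q_\alpha$ are not the coordinate vectors, so one needs the fully isotropic multi-resolvent local laws at the edge (not merely the averaged ones), which requires more care than in the bulk. Controlling the accumulation of these edge errors through all $2k$ resolvent factors, uniformly in $k$, is the heart of the matter.
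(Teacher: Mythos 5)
Your proposal takes the dynamical route (Ornstein--Uhlenbeck/eigenvector moment flow plus a Lindeberg swap to remove the Gaussian component), which is precisely the route the paper identifies as blocked in the regime this theorem addresses. For $|\mathcal I|$ proportional to $N$ one needs the \emph{colored} eigenvector moment flow of Marcinek--Yau, whose analysis exists only in the bulk; the edge-adapted flow of \cite{BenLop22QUE} handles only $|\mathcal I|\le N^{1-\tau}$. So reducing to ``follow \cite{cipolloni2022rank} and fix the edge estimates'' leaves the hardest step unaddressed: you would have to redo the entire colored flow analysis at the edge, accounting for the square-root vanishing of the density, and your proposal does not supply that analysis. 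Acknowledging that ``one cannot simply quote the bulk estimates'' is not the same as closing the gap.

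The paper's actual argument has no dynamical component at all. It is a pure four-moment (Lindeberg) comparison in the style of Knowles--Yin, swapping entries one at a time against the GOE, where the CLT is known explicitly (Theorem~\ref{thm:GOE-CLT}), so no flow and no removal of a Gaussian component is needed. The ingredients that make this work, and that are absent from your proposal, are: (a) a smoothed observable $v_\ell$ built from $\frac{\eta_\ell}{\pi}\int_{I_\ell}(GA\bar G)_{\q_\alpha\q_\alpha}\,\mathrm d E$ with $\eta_\ell=\Delta_\ell N^{-\delta_1}$ and a smoothed eigenvalue-counting indicator, where the edge condition $\ell\le N^{1-\tau}$ is used crucially to choose $\eta$ with $1\ll\Delta_\ell\eta^{-1}\ll(N\eta)^{1/4}$ --- impossible in the bulk, and the reason the argument is an \emph{edge} argument; (b) new multi-resolvent local laws with improved edge error, e.g.\ $|(GA\bar GG)_{cd}|\prec N^{3/2}\Psi^{9/4}$ for traceless $A$ (Lemma~\ref{lem:three-resolvent-local-law}), proved by a cumulant expansion; and (c) a polynomialization step (Section~\ref{sec:poly}) showing that the third-derivative terms in the Taylor expansion are \emph{odd} graded polynomials in a row of the matrix, which yields the extra factor $N^{-1/2}$ needed to make the third-moment term in the swap summable over all $O(N^2)$ entries. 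Without (c) one only gets $\E[\partial^3_{ab}f_n]=O(1)$, which is exactly the bound that was already available and is insufficient. Your approach, as written, would be an alternative proof only if the colored-flow edge analysis were carried out; as it stands it restates the known obstruction rather than resolving it.
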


As noted below in Remark~\ref{r:momcvg}, the convergence in distribution can be improved to convergence in moments. 

\subsection{Related Works}\label{s:background}

Delocalization estimates have received significant attention from the random matrix community over the past decade and a half. The estimate \eqref{intro:deloc} has a long history, and increasingly strong versions of this statement were proved in \cite{erdos2009semicircle, erdos2009local, erdos2010wegner, tao2011random, tao2010random,erdos2012bulk, erdos2012rigidity,vu2015random, aggarwal2019bulk, gotze2018local, gotze2019local}. The optimal high-probability upper bound of  $\sqrt{(2+\epsilon)\log N}$ was recently established in \cite{BL22LInfinity}. Going beyond Wigner matrices, similar estimates have been shown for band matrices \cite{xu2022bulk,bourgade2018random,erdos2013local}, heavy-tailed random matrices \cite{bordenave2013localization,bordenave2017delocalization,aggarwal2021goe,aggarwal2022mobility}, and adjacency matrices of sparse random graphs \cite{bauerschmidt2016local,erdos2013spectral}. 
Fluctuations of individual eigenvector entries of Wigner matrices were first studied in \cite{bourgade2017eigenvector}, where they were shown to be Gaussian (see also \cite[Corollary B.18]{BL22LInfinity} and \cite{benigni2021fermionic}). Arbitrary finite collections of bulk eigenvector entries were shown to be jointly Gaussian in \cite{marcinek2022high}. Fluctuations for eigenvector entries of non-Hermitian matrices were studied in \cite{dubova2024gaussian}.

As noted above, the first QUE estimate for Wigner matrices was shown in \cite{bourgade2017eigenvector}. Estimates of the form \eqref{intro:que} have also been shown for deformed Wigner matrices \cite{benigni2020eigenvectors}, band matrices \cite{xu2022bulk,bourgade2018random}, sparse random matrices \cite{bauerschmidt2016local,bourgade2017sparse,anantharaman2019quantum,anantharaman2015quantum,anantharaman2008entropy}, and heavy-tailed random matrices \cite{aggarwal2021eigenvector}. Further, a more general version of \eqref{intro:que}, known as \emph{eigenvector thermalization}, has appeared recently (motivated by the phenomena surveyed in \cite{srednicki1994chaos,d2016quantum,deutsch1991quantum}). Let $A$ be a deterministic $N\times N$ matrix such that $\| A \| \le 1$, where $\| A \|$ denotes the spectral norm of $A$. Then for any eigenvector $\bm{u}$ of a Wigner matrix, we have the high-probability bound 
\begin{equation}\label{ETH}
\left| \langle \bm{u}, A \bm{u} \rangle -  \frac{1}{N} \Tr A \right| \le \frac{N^\epsilon}{\sqrt{N}},
\end{equation}
for any $\epsilon >0$ and sufficiently large $N$ \cite{cipolloni2021eigenstate}. Subsequently, fluctuations around the leading order term in \eqref{ETH} were identified in \cite{cipolloni2022normal}, and an optimal-order error term was established in \cite{cipolloni2022rank}. A generalization of \eqref{ETH} to generalized Wigner matrices is provided in \cite{riabov2024eigenstate}.

Our expect that our proof strategy extends straightforwardly to yield the joint fluctuations for any finite set of edge eigenvectors, i.e. 
\begin{equation}
    \sqrt{\frac{N^3}{2|\I|(N-|\I|)}}\left(\sum_{\alpha \in \I}\left\langle \q_\alpha,\bm u_{\ell_1} \right\rangle^2 -\frac{|\I|}{N},\cdots,\sum_{\alpha \in \I}\left\langle \q_\alpha,\bm u_{\ell_k} \right\rangle^2 -\frac{|\I|}{N}\right)\rightarrow (Z_1,\cdots,Z_k),
\end{equation}
with convergence in distribution, where $\ell_1<\cdots<\ell_k\leq N^{1-\tau}$ and $Z_1,\ldots,Z_k$ are independent Gaussian random variables with zero mean and unit variance.
We briefly remark on this extension in \Cref{a:joint}. 

Our work does not address the intermediate spectral regime where $ i/N$ tends to $0$ slower than any negative power of $N$. We expect that this regime can be handled by a straightforward (but tedious) modification of the arguments in \cite{cipolloni2022rank}. 
However, we leave this as an open question for future work.

\subsection{Proof Strategy}
Previous works determining the fluctuations in QUE have all followed the dynamical approach to random matrix universality (surveyed in \cite{erdHos2017dynamical}). This approach uses the following three steps.

\begin{enumerate} 

\item Establish various \emph{a priori} estimates on the eigenvalues and eigenvectors of Wigner matrices, such as \eqref{intro:deloc}, which are used as input in the following steps. 

\item Determine the fluctuations in QUE for random matrices of the form $H + \sqrt{ t} W$, where $H$ is an arbitrary Wigner matrix, $W$ is a Gaussian Wigner matrix, and $t \approx N^{-c}$ for some $c>0$. This is done by recognizing $H + \sqrt{ t} W$ as the evolution of a matrix Brownian motion with initial data $H$ until time $t$. Under this stochastic process, the moments of the QUE observable in \eqref{i:CLT} evolve according to a parabolic differential equation known as the \emph{eigenvector moment flow}. A detailed analysis of this evolution shows that these moment observables converge to their equilibrium states, the Gaussian moments, after time $t$. This convergence in moments establishes \eqref{i:CLT} for the matrix $H + \sqrt{ t} W$.

\item Transfer the conclusion from the previous step to all Wigner matrices. Given an arbitrary Wigner matrix $H$, there exists a Wigner matrix $H'$ such that the first three moments of $H$ and $H' + \sqrt{t} W$ match exactly, and the difference of the fourth moments is order $t$. By a moment matching argument similar to the one used in Lindeberg's proof of the central limit theorem (see \cite[Section 11]{benaych2016lectures}), one can show that this moment condition is enough to establish that $H$ has the same fluctuations in QUE as $H' + \sqrt{t} W$, completing the proof. 
\end{enumerate}

Thus far, obstacles related to Step 2 of the dynamical approach have blocked a proof of \eqref{i:CLT} for $|\mathcal I|$ proportional to $N$ at the spectral edge. The analysis of the eigenvector moment flow in \cite{BenLop22QUE} was applicable throughout the entire spectrum, but is only effective for index sets $\mathcal I$ with cardinality $| \mathcal I | \ll N$.  The works \cite{cipolloni2022normal,cipolloni2022rank} analyzed a variation of the eigenvector moment flow introduced in \cite{marcinek2022high}, called the \emph{colored eigenvector moment flow}, which allow them to access $\mathcal I$ with $|\mathcal I|$ proportional to $N$. However, these works depend on an intricate analysis of the colored evolution dynamics presented in \cite{marcinek2022high}, which was only given in the bulk. In principle, such an analysis could also be carried out at the edge. However, given the length and sophistication of \cite{marcinek2022high}, and additional complications that arise at the edge due to the curvature of the spectral density (the semicircle law, given in \eqref{e:semicircle} below), this extension seems far from straightforward. 

Instead, we adopt an argument that has no dynamical component, and uses only moment matching. 
We draw inspiration from \cite{KnoYin13} and \cite{BloKnoYauYin16}, which characterize the joint eigenvector--eigenvalue distribution of Wigner matrices at the edge (see \cite[Remark 8.5]{BloKnoYauYin16}). Specifically, the authors show that given any finite collection of edge eigenvalues and entries of the corresponding eigenvectors, their joint distribution is asymptotically the same as the one for a Gaussian ensemble. In particular, any finite collection of such eigenvector entries is asymptotically distributed as independent Gaussians. The proof proceeds by a ``two-moment matching'' argument, which shows that two random matrix ensembles whose entries are independent, centered, and have the same variance matrix also have the same eigenvector--eigenvalue statistics at the edge. As an immediate consequence, the edge statistics of any Wigner matrix match those of a Gaussian Wigner matrix, which may be computed explicitly. The decay of spectral density at the edge is crucial to the proof, and renders it inapplicable to the bulk, where the full dynamical approach is necessary. 

We now give an overview of our proof. Let $H$ be a Wigner matrix. Our first step is to regularize the QUE observable in \eqref{i:CLT}. Let $f_n(H)$ denote a smooth approximation to the $n$-th moment of the observable on the left side of \eqref{i:CLT}, 
corresponding to some eigenvector $\bm u$,
which is differentiable in the matrix entries.\footnote{The observable itself is not differentiable in the matrix entries, which necessitates the smoothing.} 
We wish to proceed as follows. Fix indices $a,b\in\llbracket 1,N\rrbracket$.
Let $W$ denote the matrix such that $w_{ij} = h_{ij}$ for all $i,j \in \llbracket 1, N \rrbracket$ such that $(i,j)\notin\{ (a,b), (b,a)\}$, and such that $w_{ab} = w_{ba} = g$, where $g$ is a Gaussian variable with mean $0$ and variance $(1 + \delta_{ab})N^{-1}$. We observe that the first two moments of $g$ match those of $h_{ab}$. 
Finally, let $Q$ denote the matrix such that $q_{ij} = h_{ij}$ for all $i,j \in \llbracket 1, N \rrbracket$ such that  $(i,j)\notin\{ (a,b), (b,a)\}$, where $q_{ab} = q_{ba} = 0$. Then by Taylor expansion, we have 
\begin{equation*}
f_n(H) = f_n(Q) + \partial_{ab}f_n (Q)h_{ab} + \frac{1}{2}\partial_{ab}^2f_n (Q)h_{ab}^2 + \frac{1}{6}\partial_{ab}^3f_n (Q)h_{ab}^3 + \frac{1}{24}\partial_{ab}^4f_n(Q)h_{ab}^4 + X_H,
\end{equation*}
where $X_H$ is the error term in expansion. Subtracting the analogous expansion for $f_n(W)$, and taking expectations, we obtain
\begin{align*}
\E\big[f_n(H) -& f_n(W)\big] = \E\big[\partial_{ab}f_n (Q) (h_{ab} - w_{ab})\big] + \frac{1}{2}\E\big[\partial_{ab}^2f_n (Q)( h_{ab}^2 - w_{ab}^2)\big]\\
 +& \frac{1}{6}\E\big[\partial_{ab}^3f_n (Q)( h_{ab}^3 - w_{ab}^3)\big] +\frac{1}{24}\E\left[\partial_{ab}^4f_n (Q)(h_{ab}^4-w_{ab}^4)\right] + \E\big[(X_H - X_W)\big]
\\ =& \frac{1}{6}\E\big[\partial_{ab}^3f_n (Q)\big] \E\big[h_{ab}^3 - w_{ab}^3\big] +\frac{1}{24}\E\left[\partial_{ab}^4f_n (Q)\right]\E\left[h_{ab}^4-w_{ab}^4\right] + \E\big[(X_H - X_W)\big].
\end{align*}
In the previous equation, we observed that $Q$ is independent from $h_{ab}$ and $w_{ab}$, and used \[\E\big[\partial_{ab}f_n(Q)(h_{ab} - w_{ab})\big] = 0,\] which follows from $\E[h_{ab}] = \E[w_{ab}]$. We also used the analogous reasoning for the second-moment term. 

We consider the third-moment and fourth-moment terms, and neglect the error term for now. From the definition of a Wigner matrix, we have $\E\big[h_{ab}^3 - w_{ab}^3\big] = O(N^{-3/2})$ and $\E\left[h_{ab}^4-w_{ab}^4\right]=O(N^{-2})$. If we had the estimates 
\begin{align}\label{eqn:desired-estimates}
\begin{split}
    \E\big[\partial_{ab}^3f_n (Q)\big] \ll N^{-1/2},\quad \E\big[\partial_{ab}^4f_n(Q)\big]\ll1,
\end{split}
\end{align}
then we could conclude that $\E\big[f_n(H) - f_n(W)\big] \ll N^{-2}$. This estimates the error accrued when exchanging one entry of $W$ for a Gaussian. Since we need to exchange $O(N^2)$ entries, the total error will be $o(1)$, and the moments $\E[f_n(H)]$ will match those of a Gaussian random matrix in the large $N$ limit. Because \eqref{i:CLT} can directly be established for Gaussian matrices, this would complete the proof. 

The crux of the problem is then to produce a suitable regularization $f_n$ and demonstrate that its derivatives decay suitably in $N$ near the edge of the spectrum. While regularizations of \eqref{i:CLT} have appeared before, the necessary decay at the edge has not been established. For example, the regularized QUE observable in \cite{BenLop22QUE} was only shown to satisfy $\E\big[\partial_{ab}^3f_n(Q)\big] = O(1)$. 
To illustrate how our regularization works, and how we achieve the additional gain at the edge, we begin by describing the regularization of a single eigenvector entry, as accomplished in \cite{KnoYin13,BloKnoYauYin16}.

Let $\bm{u}_\ell$ be an eigenvector with associated eigenvalue $\lambda_\ell$, and let $\eta >0$ be chosen so that $\eta\ll\Delta_\ell$, where $\Delta_\ell$ is the typical size of the eigenvalue gap $\lambda_{\ell +1} - \lambda_{\ell}$. Recall the Poisson kernel identity
\begin{equation*}\frac{\eta}{\pi} \int_{\mathbb R} \frac{\d E}{(E - \lambda_\ell )^2 + \eta^2} = 1.
\end{equation*}
Fix $k\in\llbracket1,N\rrbracket$. We have the high probability estimate 
\begin{equation}\label{introreg}
\bm u_\ell(k)^2 = \frac{\eta}{\pi} \int_{\mathbb R} \frac{ \bm u_\ell(k)^2 \, \d E }{(E - \lambda_\ell)^2 + \eta^2 } \approx \frac{\eta}{\pi} \int_{I} \frac{ \bm u_\ell(k)^2 \, \d E }{(E - \lambda_\ell)^2 + \eta^2 } \approx \frac{\eta}{\pi} \int_{I}  \sum_{i=1}^N \frac{ \bm u_i(k)^2 \, \d E }{(E - \lambda_i)^2 + \eta^2 },
\end{equation}
where $I$ is any interval centered at $\lambda_\ell$ such that $\eta\ll| I |\ll \Delta_{\ell} $, and we used $$ \max( \lambda_{\ell +1} - \lambda_{\ell},\lambda_{\ell} - \lambda_{\ell-1})\gg \eta$$ to neglect the terms with $i\neq \ell$ in the sum. Letting $G = (H - E - \mathrm{i} \eta)^{-1}$ denote the resolvent of $H$, the spectral theorem implies that 
\begin{equation}\label{i:resolventreg}
\frac{\eta}{\pi} \int_{I}  \sum_{i=1}^N \frac{ \bm u_i(k)^2 \, \d E }{(E - \lambda_i)^2 + \eta^2 } =\frac{\eta}{\pi} \int_I (G\bar G)_{kk}\, \d E.
\end{equation}
For illustrative purpose, let us treat $I$ as a deterministic interval. Then, we see that 
\begin{align*}
\begin{split}
   f_n(H) \defeq  \left(\frac{N\eta}{\pi} \int_I (G\bar G)_{kk}\, \d E\right)^{n} \approx \left(\sqrt{N}\bm u_\ell(k)\right)^{2n},
\end{split}
\end{align*}
 is a smooth function of the matrix entries. We multiplied $\bm u_{\ell}(k)$ by $\sqrt{N}$ to make it an $O(1)$ quantity.

Letting $R=(Q-E-\iu\eta)^{-1}$ denote the resolvent of $Q$ and taking derivatives, one readily finds that\footnotemark
\footnotetext{There are multiple terms as a result of applying product rule, so we focus on one representative term for clarity.}
\begin{equation}\label{i:derivative}
    \partial_{ab}^m f_n(Q)\approx n(n-1)\cdots (n-m+1)(\sqrt{N}\bm u_\ell(k))^{2n-m} \int_{I}N\tilde P_m\,\d E+\cdots,
\end{equation}
where $\tilde P_m$ is a polynomial with constant number of terms, and  each term consists of one $(R\bar R)_{**}$ factor and $m$ $R_{**}$'s or $\bar R_{**}$'s.
Here $*\in\{a,b,k\}$ and different appearances of $*$ may take different values.

So far, we have not used the fact that $\lambda_\ell$ is an edge eigenvalue. The crucial use of this fact is that we are able to choose the spectral parameter $\eta$ such that $1\ll \Delta_\ell\eta^{-1}\ll (N\eta)^{1/4}$. Indeed, if $\lambda_\ell$ were in the bulk, we would have $\Delta_\ell=O\left(N^{-1}\right)$, and such choice would not be possible. Combined with the standard local law for resolvents of Wigner matrices (see \eqref{eqn:2-resolvents} below), for any $\epsilon>0$, we have
\begin{align}\label{i:two-res-bound}
\begin{split}
    (R\bar R)_{ij}\leq N^{\epsilon}(N\eta)^{-2}
\end{split}
\end{align}
with high probability for all $i,j \in \llbracket 1, N \rrbracket$. Therefore, we have the bound
\begin{align}\label{i:p-integral}
\begin{split}
    \int_IN\tilde P_m \leq |I|N^{1+\epsilon} (N\eta)^{-2}\leq (N\eta)^{-1/2}\ll 1,
\end{split}
\end{align}
by the choice of $I$ and $\eta$. Inserting this into \eqref{i:derivative}, we have
\begin{align}
\begin{split}
    \big|\partial_{ab}^mf_n(Q)\big|\ll 1,
\end{split}
\end{align}
with high probability. Upon taking expectation, this establishes the second inequality in \eqref{eqn:desired-estimates}.

For the first inequality in \eqref{eqn:desired-estimates}, we need to exploit an additional cancellation that is introduced when taking expectation. To uncover this cancellation, we use the \textit{polynomialization} technique, which first appeared systematically in \cite{erdHos2013averaging}, and was further developed in \cite{yin2014local,BloKnoYauYin16}. The main idea is to write $\tilde P_m$ in the form
\begin{equation}
    \tilde P_m \approx \sum_{i_1,\ldots,i_d\neq a}\tilde P_{m,i_1,\ldots,i_d}^{(a)}h_{ai_1}\cdots h_{ai_d},
\end{equation}
where $\tilde P^{(a)}_{m,i_1,\ldots,i_d}$ is independent of $a$-th row and column of Q. When $d$ is an odd number, we have
\begin{align}\label{i:additional-gain}
\begin{split}
    \left|\E\big[\tilde P_m\big] \right|\lesssim N^{-1/2}\sqrt{\E\big[|\tilde P_m|^2\big]}.
\end{split}
\end{align}
To see why \eqref{i:additional-gain} is true, consider a simple case, where $\mathcal P=\sum_{i_1,i_2,i_3\neq a}h_{ai_1}h_{ai_2}h_{ai_3}$. Taking expectation forces $i_1,i_2,i_3$ to coincide, and therefore
\begin{align*}
\begin{split}
    \big|\E [\mathcal P]\big|=\left|\E\left[\sum_i h_{ai}^3\right]\right|\lesssim N^{-1/2}=N^{-1/2}\sqrt{\E\left[\sum_{i_1,i_2,i_3}h_{ai_1}^2h_{ai_2}^2h_{ai_3}^2\right]}\leq N^{-1/2}\sqrt{\E\left[|\mathcal P|^2\right]}.
\end{split}
\end{align*}
More generally, it can be shown that $\tilde P_m$ can be approximated by an odd polynomial as long as $m$ is odd and $a,b,k$ are distinct indices. Combining \eqref{i:additional-gain} with \eqref{i:derivative} and \eqref{i:p-integral}, we obtain the first inequality in \eqref{eqn:desired-estimates} for all indices $a,b$, except for the $O(n)$ pairs such that $a=b,a=k$ or $b=k$, which is sufficient for our purpose.

Regularizing the QUE observable in \eqref{i:CLT} can be accomplished similarly by replacing each term $\langle \bm q_\alpha, \bm u_\ell \rangle^2$ appearing there by the regularization given in \eqref{i:resolventreg}. However, to appropriately control the size of the resulting moments, we need to detect additional cancellations in the sum; it is not enough to bound each term individually. For this, we use \emph{multi-resolvent local laws}, which bound the quantities $(GA\overline{G})_{cd}$ and $(GA \overline{G} G)_{cd}$ for any choice of $c,d \in \llbracket 1 , N \rrbracket$ and  deterministic $N\times N$ matrix $A$ such that $\| A \| \le 1$ and $\Tr A = 0$; see Lemma~\ref{lem:three-resolvent-local-law} below. While such laws have been established previously \cite{cipolloni2022rank,CipErdSch22optimal,cipolloni2021eigenstate}, we prove a new version with improved error terms at the spectral edge. These improved estimates allow us to obtain sharper bounds in the moment matching argument, which are necessary to complete the proof. 

In summary, our argument involves three interlocking technical components: eigenvector regularization at the edge, two-moment matching, and multi-resolvent local laws. While the first two elements have been applied previously to characterize eigenvalue statistics at the edge \cite{KnoYin13,BloKnoYauYin16}, we deal here with more general statistics $\langle \bm u, A \bm u \rangle$, which present new challenges, including a more complicated set of error terms that must be bounded using the polynomialization technique to enforce the appropriate regularization. As mentioned previously, simpler regularization schemes, such as the one considered in \cite{BenLop22QUE}, do not seem to suffice.

To implement our argument, our new multi-resolvent local law (Lemma~\ref{lem:three-resolvent-local-law}) is a crucial technical input; previous local laws do not provide the necessary bounds at the spectral edge. We remark that after the first version of this paper appeared, a different multi-resolvent local law at the edge was proved in \cite[Theorem 2.4]{cipolloni2023eigenstate}, which implies a strong form of eigenvector thermalization. However, this result does not seem to suffice for our purpose, since it does not reproduce the bounds in Lemma~\ref{lem:three-resolvent-local-law}. 

It is natural to ask whether \Cref{thm:CLT} extends to matrices $A$ such that $\Tr (A^2) \gg 1$. While this broader conclusion is likely true, there appears to be an intrinsic difficulty in extending our two-moment matching approach to prove it. 
We explain this point in Remark~\ref{r:theproblem} below.

\subsection{Outline}
Section~\ref{s:preliminaries} introduces our notational conventions and states several preliminary results from previous works that are used throughout this paper. Section~\ref{sec:regularized-observables} defines the smoothed QUE observables needed for our moment matching argument. Section~\ref{s:conclusion} proves our main result, Theorem~\ref{thm:CLT}, assuming two preliminary lemmas, Lemma~\ref{lem:three-resolvent-local-law}, and Lemma~\ref{lem:third-moment-terms}. Lemma~\ref{lem:third-moment-terms} is proved in Section~\ref{sec:poly}. \Cref{s:goeclt} establishes the analogue of our main result for Gaussian random matrices, and 
Appendix~\ref{sec:local-law} contains the proof of Lemma~\ref{lem:three-resolvent-local-law}. 
We comment on how to extend our main result to the joint distribution of edge eigenvectors in \Cref{a:joint}.

\subsection{Acknowledgments}

The authors thank Antti Knowles for several helpful conversations and Giorgio Cipolloni for many useful comments on \cite{{cipolloni2023eigenstate}}. 
They also grateful to the anonymous referees for the detailed feedback, which substantially improved this article. 
Patrick Lopatto was supported by the NSF postdoctoral
fellowship DMS-2202891. 
Xiaoyu Xie was supported by NSF grants DMS-1954351 and DMS-2246838. 
Lucas Benigni and Patrick Lopatto also wish to acknowledge the NSF
grant DMS-1928930. This grant supported their participation in the Fall 2021 semester program at
MSRI in Berkeley, California titled, ``Universality and Integrability in Random Matrix Theory and
Interacting Particle Systems,'' where this project began.
\section{Preliminaries}\label{s:preliminaries}


\subsection{Conventions}\label{s:conventions}
For the remainder of the paper, we fix an arbitrary constant $\tau\in(0,1)$, a sequence of deterministic traceless matrices $A=A_N\in\mathbb R^{N\times N}$ such that $A= A^*$, $\|A\|\leq 1$,  and $\Tr(A^2)\geq N^{1-\delta} $, where $\delta=\delta(\tau)>0$ will be defined in \Cref{para}, and a sequence of deterministic indices $\ell=\ell_N \in \llbracket 1, N^{1-\tau} \rrbracket\cup\llbracket N-N^{1-\tau},N\rrbracket$.  Without loss of generality, we always assume that $\ell \in \llbracket 1, N^{1-\tau} \rrbracket$.  We also fix a sequence of positive reals $(\mu_p)_{p=1}^\infty$. We assume that all Wigner matrices mentioned below satisfy Definition~\ref{def:wigner} with this sequence of constants. Our claims hold for any choices of $\tau$, $A$, $\ell$, and $(\mu_p)_{p=1}^\infty$.

We also define the spectral domain
\begin{equation}\label{eqn:spectral-domain}
\bm S= \bm S( N)=\left\{z=E+\mathrm{i} \eta \in \mathbb{C}:|E| \leqslant \frac{10}{\tau}, N^{-1+\tau/10} \leqslant |\eta| \leqslant \frac{10}{\tau} \right\}.
\end{equation}


Throughout this article, we typically suppress the dependence of various constants in our results on the choices of $\tau$ and $(\mu_p)_{p=1}^\infty$. These dependencies do not affect our arguments in any substantial way. Additionally, we focus on the case of real symmetric Wigner matrices in our proof of Theorem~\ref{thm:CLT}. The details for the complex Hermitian case are nearly identical, and hence omitted. 

\subsection{Notations and Definitions}

Let $\operatorname{Mat}_N$ be the set of $N \times N$ real symmetric matrices and $\{\e_i\}_{i=1}^N$ be the standard basis of $\R^N$. Let $\| M \|$ denote the spectral norm of $M$. We index the eigenvalues of matrices $M \in \operatorname{Mat}_N$ in increasing order, and denote them $\lambda_1 \leqslant \lambda_2 \leqslant \ldots \leqslant \lambda_N$. For $z \in \mathbb C\backslash\R$, the resolvent of $M \in \operatorname{Mat}_N$ is given by $G(z)=(M-z)^{-1}$. 
The Stieltjes transform of $M$ is
$$
m_N(z)= \frac{1}{N} \operatorname{Tr} G(z)=\frac{1}{N} \sum_i \frac{1}{\lambda_i-z} .
$$
The resolvent has the spectral decomposition
$$
G(z)=\sum_{i=1}^N \frac{\bm{u}_i \bm{u}_i^*}{\lambda_i-z},
$$
where we let $\bm{u}_i$ denote the eigenvector corresponding to the eigenvalue $\lambda_i$ of $M$ such that $\left\|\bm{u}_i\right\|_2=1$. We fix the sign of $\bm{u}_i$ arbitrarily by demanding that $\bm{u}_i(1) \geqslant 0$. For deterministic vectors $\bm x,\bm y$, we abbreviate $\langle\bm x,M\bm y\rangle$ by $M_{\bm x\bm y}$ and we abbreviate $M_{\bm x\bm y}$ further by $M_{i\bm y}$ or $M_{\bm xj}$ if $\bm x=\bm e_i$ or $\bm y=\bm e_j$ respectively.

The semicircle density and its Stieltjes transform are
\begin{equation}\label{e:semicircle}
\rho_{\mathrm{sc}}(E)=\frac{\sqrt{(4-E^2)_{+}}}{2 \pi}\, \mathrm{d} E, \qquad \m(z)=\int_{\mathbb{R}} \frac{\rho_{\mathrm{sc}}(x)}{x-z} \, \mathrm{d} x = \frac{-z + \sqrt{z^2 -4}}{2},
\end{equation}
for $E \in \mathbb{R}$ and $z \in \mathbb{C}\backslash \mathbb{R}$. The square root in $\sqrt{z^2 -4}$ is defined with a branch cut in $[-2,2]$, so that $\Im \m(z) >0$ for $\Im z>0$. 

For $i \in \llbracket 1, N \rrbracket$, we denote the $i$-th $N$-quantile of the semicircle distribution by $\gamma_i$ and define it implicitly by
\begin{equation}
\label{gammaidef}
\frac{i}{N}=\int_{-2}^{\gamma_i} \rho_{\mathrm{sc}}(x) \, \mathrm{d} x .
\end{equation}
We will often differentiate functions of a matrix $M\in \matn$ with respect to some entry $m_{ab}$ of $M$. For example, we will consider quantities such as $\partial_{ab} f(M)$, where $\partial_{ab}$ means that we consider $M$ as a function of its upper-triangular elements $\{ m_{ij} \}_{1 \le i \le j \le N}$ and differentiate with respect to $m_{ab}$ when $a \le b$, or with respect to $m_{ba}$ when $b \le a$. Most commonly, we take $f$ to be the resolvent $f(M) = (M -z)^{-1}$, or some product of resolvents.

Finally, we adopt the convention that $\mathbb{N} = \{1,2,3,\dots\}$.
\subsection{Local law for resolvent and multi-resolvent}\label{sec:multi-resolvent-local-law}

We require the isotropic local law proved in \cite{alex2014isotropic} and the multi-resolvent local law proved in \cite{CipErdSch22optimal}. We begin by recalling the notion of stochastic domination (which was introduced in \cite{erdos2013local}).

\begin{definition}[Stochastic domination]\label{def:stochasticDomination}
Let
$$
X=\left(X^{(N)}(u): N \in \mathbb{N}, u \in U^{(N)}\right), \quad Y=\left(Y^{(N)}(u): N \in \mathbb{N}, u \in U^{(N)}\right)
$$
be two families of nonnegative random variables, where $U^{(N)}$ is a possibly $N$-dependent parameter set. We say that $X$ is stochastically dominated by $Y$, uniformly in $u$, if for all (small) $\varepsilon>0$ and (large) $D>0$ there exists $N_0(\varepsilon, D) >0$ such that
$$
\sup _{u \in U^{(N)}} \mathbb{P}\left[X^{(N)}(u)>N^{\varepsilon} Y^{(N)}(u)\right] \leqslant N^{-D}
$$
for all $N \geqslant N_0(\varepsilon, D)$. Unless stated otherwise, throughout this paper the stochastic domination will always be uniform in all parameters apart from  $\delta$, $\tau$, and the constants $\mu_p$ (which were fixed in Section~\ref{s:conventions});
thus, $N_0(\varepsilon, D)$ also depends on $\mu_p,\tau,\delta$. If $X$ is stochastically dominated by $Y$, uniformly in $u$, we use the notation $X \prec Y$. Moreover, if for some complex family $X$ we have $|X| \prec Y$ we also write $X=O_{\prec}(Y)$. The notion of stochastic domination can be trivially extended to deterministic quantities $A=A^{(N)}$ and $B=B^{(N)}$ with the understanding that $A\prec B$ implies that for all $\epsilon>0$, we have $A\leq N^\epsilon B$ for  all $N\geq N_0(\epsilon)$. In this case, we also write $A=O_{\prec}(B)$ if $|A|\prec B$.
\end{definition}

We first introduce the isotropic local law for a single resolvent. 
\begin{theorem}[Isotropic local law]\label{thm:iso-single}
    Let $H$ be a Wigner matrix, and let $G=(H-z)^{-1}$ be its resolvent. Then 
    \begin{equation}\label{isotropic}
    \sup_{z\in \bm S}
\big|\langle\bm x, G(z) \mathbf{y}\rangle-\langle\bm x, \mathbf{y}\rangle \m(z)\big| \prec \sqrt{\frac{|\operatorname{Im} \m(z)|}{N \eta}}+\frac{1}{N \eta}
\end{equation}
for any choice of deterministic vectors $\bm x,\bm y\in\mathbb S^{N-1}$, where $\eta = |\Im z|$. 
\end{theorem}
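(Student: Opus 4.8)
The plan is to derive the isotropic law from the \emph{entrywise} local law for Wigner matrices, which is by now classical: uniformly for $z\in\bm S$ one has
\[
\max_{i,j\in\llbracket 1,N\rrbracket}\bigl|G_{ij}(z)-\delta_{ij}\m(z)\bigr|\prec\Psi(z),\qquad\bigl|m_N(z)-\m(z)\bigr|\prec(N\eta)^{-1},
\]
where $\Psi(z)=\sqrt{|\Im\m(z)|/(N\eta)}+(N\eta)^{-1}$ (see \cite{erdos2013local} and the references therein; this applies on $\bm S$ since there $|E|\leq 10/\tau$ and $|\eta|\geq N^{-1+\tau/10}$). Three reductions come first. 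By the reflection $z\mapsto\bar z$, together with $G(\bar z)=\overline{G(z)}$ and $\Psi(\bar z)=\Psi(z)$, we may assume $\eta>0$; by polarization it suffices to treat $\bm y=\bm x$ (handling $\bm x=\pm\bm y$ directly and otherwise normalizing $\bm x\pm\bm y$); and since $\partial_zG=G^2$ has operator norm at most $\eta^{-2}\leq N^2$, the quantity $\langle\bm x,G(z)\bm x\rangle-\m(z)$ is Lipschitz in $z$ with polynomial constant, so it suffices to establish the bound for each $z$ in an $N^{-10}$-net of $\bm S$ and take a union bound. We may thus fix $z$.

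Set $Z=\langle\bm x,G\bm x\rangle-\m$. The engine is the identity $HG=I+zG$, which yields $\langle\bm x,HG\bm x\rangle=1+z\langle\bm x,G\bm x\rangle$. For each fixed $p\in\N$ I would prove a recursive moment bound for $\E|Z|^{2p}$ by applying the cumulant expansion in the independent entries $\{h_{ab}\}_{a\leq b}$ to an expression of the form $\E\bigl[\langle\bm x,HG\bm x\rangle\,\overline Z^{\,p}Z^{\,p-1}\bigr]$. The first-cumulant contribution vanishes because $\E h_{ab}=0$; the second-cumulant contribution, via $\E h_{ab}^2=N^{-1}(1+\delta_{ab})$ and $\partial_{ab}G=-G(\bm e_a\bm e_b^*+\bm e_b\bm e_a^*)G$, produces the self-consistent main term $-m_N\langle\bm x,G\bm x\rangle\,\overline Z^{\,p}Z^{\,p-1}$ together with lower-order pieces that are controlled, through the entrywise law and the Ward identity $G^*G=\eta^{-1}\Im G$, by $\Psi^2$ or by self-referential quantities such as $\Psi\,|Z|$; the higher-cumulant contributions are similarly negligible by power counting. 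Using $m_N=\m+O_\prec((N\eta)^{-1})$ and $\m^2+z\m+1=0$ to rewrite the main term, and using that $z+\m=-\m^{-1}$ is bounded away from $0$ on $\bm S$, one arrives at $\E|Z|^{2p}\leq N^{\epsilon}\bigl(\Psi^{2p}+\Psi\,\E|Z|^{2p-1}+\cdots\bigr)$ for every $\epsilon>0$. After a preliminary crude bound $Z=O_\prec(1)$ (obtained by the standard continuity argument, decreasing $\eta$ from $10/\tau$ in small steps), iterating this inequality gives $\E|Z|^{2p}\leq N^{\epsilon}\Psi^{2p}$, whence $|Z|\prec\Psi$ by Markov's inequality and Definition~\ref{def:stochasticDomination}. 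No property of $\bm x$ beyond $\|\bm x\|_2=1$ is used, so the estimate is uniform in $\bm x,\bm y$.

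I expect the main obstacle to be the bookkeeping of the cumulant (equivalently, self-consistent-equation) expansion \emph{at the spectral edge}. The stability of the self-consistent equation is governed by $1-\m(z)^2$, which degenerates like $\sqrt{\kappa+\eta}$, with $\kappa=\bigl||E|-2\bigr|$, as $z\to\pm2$; one must verify that this degeneracy is exactly compensated by the factor $|\Im\m(z)|$ appearing inside $\Psi(z)$, and that the Ward identity is invoked at every point where a naive bound of order $(N\eta)^{-1}$ would otherwise be lost. This is the standard difficulty in edge local laws, and it is why the right-hand side of \eqref{isotropic} is $\Psi(z)$ rather than a plain power of $(N\eta)^{-1}$. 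Since the statement is essentially the isotropic local law of \cite{alex2014isotropic} (there proved for $\Im z>0$, extended to $\Im z<0$ by conjugation and specialized to the domain $\bm S$), in practice one may simply invoke that reference.
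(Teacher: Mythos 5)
The paper offers no proof of this statement: Theorem~\ref{thm:iso-single} is imported verbatim from \cite{alex2014isotropic}, exactly as the final sentence of your proposal anticipates, so in that sense your bottom line coincides with what the paper actually does. Your sketch of an independent proof is nonetheless sound and worth a comment on how it differs from the cited source. The route you outline — polarization to reduce to $\langle\bm x,G\bm x\rangle$, a lattice/Lipschitz argument to fix $z$, and then a recursive moment estimate for $Z=\langle\bm x,G\bm x\rangle-\m$ obtained by cumulant-expanding $\E\bigl[\langle\bm x,HG\bm x\rangle\overline{Z}^{\,p}Z^{\,p-1}\bigr]$, with the Ward identity supplying the $\Im\m/(N\eta)$ gain and the stability factor $z+\m=-\m^{-1}$ closing the self-consistent equation — is the modern cumulant-expansion proof of the isotropic law (in the style used elsewhere in this very paper, e.g.\ in Appendix~\ref{sec:local-law}), and it does yield the theorem, including at the edge, provided one carries out the bookkeeping you flag. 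The original reference instead proves the result by expanding $\langle\bm x,G\bm x\rangle$ directly in the matrix entries and controlling the resulting high moments through an intricate graphical resummation built on top of the entrywise law; your approach trades that combinatorics for the recursive inequality $\E|Z|^{2p}\le N^{\epsilon}(\Psi^{2p}+\Psi\,\E|Z|^{2p-1}+\cdots)$, which is shorter to close but requires the a priori bound $Z=O_\prec(1)$ and the stochastic-continuity bootstrap in $\eta$ that you correctly include. Two minor cautions if you were to execute the sketch: the polarization step must track the norms of $\bm x\pm\bm y$ (harmless, since the error scales quadratically in the norm), and in the second-cumulant term the derivative also produces $\tfrac1N(GG^{\trans})_{\bm x\bm x}$-type contributions that must be absorbed via the Ward identity rather than bounded naively by $(N\eta)^{-1}\cdot\eta^{-1}$. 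Neither issue is a gap in the plan; both are standard.
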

\begin{remark}\label{rmk:stochastic-continuation}
    In this and all following local laws, the high probability bound may be strengthened to hold simultaneously for all $z$ in the specified domain. For instance, \eqref{isotropic} may be strengthened to 
    \begin{align}
        \mathbb{P}\left[\bigcap_{z \in \mathbf{S}}\left\{\left|\langle\mathbf{x}, G(z) \mathbf{y}\rangle-\m(z)\langle\mathbf{x}, \mathbf{y}\rangle\right| \leqslant N^{\varepsilon}\left(\sqrt{\frac{\operatorname{Im} \m(z)}{N \eta}}+\frac{1}{N \eta}\right)\right\}\right] \geqslant 1-N^{-D},
    \end{align}
    for all $\epsilon>0,D>0$ and $N\geq N_0(\epsilon,D)$. It follows from a straightforward lattice argument combined with the Lipschitz continuity of $G,\m$ on $\bm S$. See \cite[Remark~2.7]{benaych2016lectures} for details.
\end{remark}

We next present the multi-resolvent local law. Observe that \Cref{thm:iso-single} establishes the deterministic approximation $G(z) \approx \m(z) I$, where $I \in \matn$ is the identity matrix. The multi-resolvent law identifies deterministic approximations to the more general quantities 
\begin{equation}\label{theproduct}
G(z_1) A_1 G(z_2) A_2 \cdots G_k(z_k) A_k G_{k+1}(z_{k+1}),
\end{equation}
where $z_1, \dots, z_{k+1} \in \bm S$ may be distinct and $A_1, \dots, A_k \in \matn$ are deterministic matrices. These deterministic approximations are defined using the notion of \emph{free cumulants} from free probability. We take a combinatorial approach to their definition and refer the reader to \cite[Section 4]{speicher1994multiplicative} for more on their origin in free probability.

Recall that for any random variable $X$, its moments $\mu^{(r)}(X)$ and cumulants $\kappa^{(r)}(X)$ satisfy the relation
\[
\mu^{(n)} = \sum_{\pi \in \Pi_n} \prod_{B \in \pi} \kappa^{(|B|)}
\]
for all $n \in \N$, where $\Pi_n$ is the set of all partitions of $\{1,2\dots, n\}$, the product is over all blocks $B$ of the partition $\pi$, and $|B|$ denotes the number of elements in $B$. For example, the partition $(145)(26)(3)$ has three blocks.
The free cumulants are represented similarly in terms of  \emph{non-crossing partitions}, which we now define. We follow the notation of \cite{GW16}; see also \cite{kreweras1972partitions}.

\begin{definition}
For all $k \in \mathbb{N}$, let $[k]$ denote the set $\{1,2,\dots, k\}$. A \emph{set partition} of $[k]$ is a set $\pi$ of disjoint subsets of $[k]$ whose union is $[k]$. The elements of $\pi$ are called \emph{blocks}. Given a set partition $\pi$, a \emph{bump} is an ordered pair $(i_1, i_2)$ such that $i_1$ and $i_2$ lie in the same block of $\pi$,  $i_1 < i_2$, and there is no $j$ in the same block with $i_1 < j < i_2$. We say that $\pi$ is a \emph{noncrossing partition} if for every pair of bumps $(i_1, i_2)$ and $(j_1, j_2)$ in $\pi$, it is not the case that $i_1 < j_1 < i_2 < j_2$. We let $\operatorname{NC}[k]$ denote the set of non-crossing partitions of $[k]$. 
\end{definition}

We also need the notion of the Kreweras complement of a partition.
It relies on the following geometric description of non-crossing partitions: a partition $\pi$ of $\{1,\dots,n \}$ is non-crossing if and only if when the elements of $\{1,\dots,n \}$ are arranged in order on a circle, so that they divide the circle into equal arcs, the set of polygons $\{P_B\}$ given by the convex hulls of the points in each block $B$ are pairwise disjoint.

\begin{definition}
Arrange the points in $[k]$ equidistantly on the boundary of the unit disk $\mathbb D$, with labels increasing counterclockwise. Label the arcs between adjacent points so that arc $i$ connects point $i$ to its neighbor in the counterclockwise direction. Given $\pi \in \operatorname{NC}[k]$, we define the \emph{Kreweras complement} $K(\pi)\in \operatorname{NC}[k]$ of $\pi$ to be the partition such that two points $x,y \in [k]$ belong to the same block of $K(\pi)$ if and only if the arcs $x,y$ are in the same connected component of $\mathbb D \setminus \cup_{B \in \pi} P_B$, where $P_B$ denotes the convex hull of the vertices in the block $B$.
\end{definition}


Further, for all $\pi \in \operatorname{NC}[k]$, and matrices $A_1, \dots, A_{k-1} \in \operatorname{Mat}_N$, we define the partial trace $\operatorname{pTr}_\pi$ associated to partition $\pi$ to be the element of $\matn$ given by
\begin{align}
\begin{split}
    \operatorname{pTr}_\pi\left(A_1, \ldots, A_{k-1}\right)=\frac{1}{N} \left( \prod_{j \in B(k) \backslash\{k\}} A_j\right) \prod_{B \in \pi \backslash B(k)}\left[ \Tr\left(\prod_{j \in B} A_j\right) \right],
\end{split}
\end{align}
where $B(k)\in\pi$ denotes the unique block containing $k$. We recall that by convention, an empty product is equal to $1$.

For any subset $B\subset [k]$ we define 
\begin{align}
\begin{split}
    m[B]\defeq m_{\operatorname{sc}}\big[\{z_i\mid i\in B\}\big]=\int_{-2}^2\rho_{\operatorname{sc}}(x)\prod_{i\in B}\frac{1}{x-z_i}\, \mathrm{d}x.
\end{split}
\end{align}
For every $k \in \mathbb{N}$, let $m_\circ[\cdot]\colon 2^{[k]} \rightarrow \mathbb{C}$ denote the free-cumulant transform of $m[\cdot]$, which is defined implicitly by requiring that the relation
\begin{align}
\begin{split}
    m[B]=\sum_{\pi \in \operatorname{NC}(B)} \prod_{B^{\prime} \in \pi} m_{\circ}\left[B^{\prime}\right], \quad \forall B \subset[k]
\end{split}
\end{align}
holds for all $k$. 
For example, when $k=1$, we have $m_\circ[i] =m[i]$, and for $k=2$ we have $m_\circ[i,j] = m[i,j] -m[i]m[j]$. 
For further details, see the discussion following \cite[Definition 2.3]{cipolloni2022thermalisation}. We now define the deterministic equivalent for \eqref{theproduct}.

\begin{definition}
For arbitrary deterministic matrices $A_1,\ldots,A_{k-1}\in \operatorname{Mat}_N$ and spectral parameters $z_1,\ldots,z_k \in \mathbb{C} \backslash \mathbb{R}$, define
\begin{align}\label{Mdefinition}
\begin{split}
    M(z_1, A_1, \ldots, A_{k-1}, z_k):=\sum_{\pi \in \operatorname{NC}[k]} \operatorname{pTr}_{K(\pi)}\left(A_1, \ldots, A_{k-1}\right) \prod_{B \in \pi} m_{\circ}[B].
\end{split}
\end{align}
\end{definition}
We are now ready to state the multi-resolvent local laws necessary for our work.
\begin{lemma}[{\cite[Lemma 2.4]{CipErdSch22optimal}}]
Fix $k, m\in \mathbb{N}$ with $m\le k$ and a constant $C_0 >0$. Let $A_1,\dots, A_k \in \operatorname{Mat}_N$ be deterministic matrices such that $\left\|A_i\right\| \le C_0$ for all $1\le i \le k$, and suppose that 
$\Tr A_j=0$ holds for at least $m$ distinct indices $j$. Then there exists a constant $C=C(C_0,k)>0$ such that
\begin{equation}
\left| \Tr\left(M\left(z_1, A_1, \ldots, z_{k-1}, A_{k-1}, z_k\right) A_k\right)\right| \le \begin{cases} CN \eta^{-(k-1-\lceil m / 2\rceil)} & d \leq 1 \\
CN d^{-k} & d \geq 1\end{cases}\label{eqn:center-avg}
\end{equation}
and
\begin{equation}
\left\|M\left(z_1, A_1 \ldots, z_k, A_k, z_{k+1}\right)\right\| \le \begin{cases}C\eta^{-(k-\lceil m / 2\rceil)} 
& d \leq 1 \\
C d^{-k-1}
& d \geq 1,\end{cases} \label{eqn:center-iso}
\end{equation}
where $\eta:=\min _j\left|\Im z_j\right|$ and $d:=\min _j \operatorname{dist}\left(z_j,[-2,2]\right)$.
\end{lemma}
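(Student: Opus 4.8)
The plan is to expand $M$ through its definition \eqref{Mdefinition} as a finite sum over non-crossing partitions and bound each summand; the point is that the traceless hypotheses on the $A_j$ make most summands vanish and force a favourable block structure on the ones that survive. Throughout, $\lesssim$ hides a constant depending only on $k$ and $C_0$, and the powers of $N$ and the trace normalizations appearing in \eqref{Mdefinition} are tracked through the computation without further comment. \emph{Step 1 (semicircle inputs).} First I would record the needed deterministic estimates on $m_\circ[\cdot]$. By partial fractions, $m[B]$ is the (confluent) divided difference of $\m$ at the nodes $\{z_i : i\in B\}$, so the Hermite--Genocchi formula gives $m[B] = \int_{\Delta_{|B|-1}} \m^{(|B|-1)}\big(\sum_i t_i z_i\big)\,\d t$ over the $(|B|-1)$-simplex, provided the $z_i$ lie on one side of $\mathbb{R}$ so that their convex hull avoids $[-2,2]$. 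Since $|\m(z)|\le 1$ on $\mathbb{C}\setminus\mathbb{R}$ and $|\m(z)| \le \operatorname{dist}(z,[-2,2])^{-1}$, Cauchy's estimate $|\m^{(j)}(\xi)| \le j!\,|\Im\xi|^{-j}$ yields $|m[B]| \lesssim \eta^{-(|B|-1)}$ when $d\le 1$ and $|m[B]| \lesssim d^{-|B|}$ when $d\ge 1$, with $\eta=\min_j|\Im z_j|$, $d=\min_j\operatorname{dist}(z_j,[-2,2])$. Möbius inversion over $\operatorname{NC}(B)$ writes $m_\circ[B]$ as a $k$-dependent bounded combination of products $\prod_{B'}m[B']$ with $\sum_{B'}|B'|=|B|$, so the same bounds pass to $m_\circ$; in particular $|m_\circ[B]|\le 1$ when $|B|=1$ and $d\le 1$. (If the $z_j$ have imaginary parts of mixed signs I would instead pull $|B|-1$ factors out in sup-norm and integrate the last against $\rho_{\mathrm{sc}}$, at the cost of a harmless logarithm, or split the nodes by sign.)

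\emph{Step 2 (combinatorics and vanishing).} I would use the standard identity $|\pi|+|K(\pi)|=n+1$ for $\pi\in\operatorname{NC}[n]$, i.e.\ $\sum_{B\in\pi}(|B|-1)+\sum_{B'\in K(\pi)}(|B'|-1)=n-1$. Unfolding \eqref{Mdefinition} with $n=k$, the quantity $\Tr\big(M(z_1,A_1,\ldots,z_{k-1},A_{k-1},z_k)A_k\big)$ is, up to the factor $N$, a sum over $\pi\in\operatorname{NC}[k]$ of $\big(\prod_{B\in\pi}m_\circ[B]\big)\prod_{B'\in K(\pi)}\Tr\big(\prod_{j\in B'}A_j\big)$, where $A_k$ is inserted into the trace over the block of $K(\pi)$ containing $k$. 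Any summand in which a singleton block $B'=\{j\}$ of $K(\pi)$ meets a traceless $A_j$ vanishes; hence every surviving $\pi$ places all $m$ traceless indices in blocks of $K(\pi)$ of size at least $2$. A short covering argument — the cheapest way to cover $m$ marked points by size-$\ge 2$ blocks is to pair them up, producing total excess $\sum(|B'|-1)=\lceil m/2\rceil$ — then shows $\sum_{B'\in K(\pi)}(|B'|-1)\ge\lceil m/2\rceil$ for surviving $\pi$, whence $\sum_{B\in\pi}(|B|-1)\le k-1-\lceil m/2\rceil$ by the identity.

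\emph{Step 3 (conclusion).} For $d\le 1$, combining $\big|\Tr(\prod_{j\in B'}A_j)\big|\lesssim C_0^{|B'|}$ with $\prod_{B\in\pi}|m_\circ[B]|\lesssim \eta^{-\sum_{B}(|B|-1)}\le \eta^{-(k-1-\lceil m/2\rceil)}$ bounds each summand by $\lesssim N\eta^{-(k-1-\lceil m/2\rceil)}$, and summing the finitely many partitions gives the averaged estimate; for $d\ge 1$ the crude bound $\prod_B|m_\circ[B]|\lesssim d^{-\sum_B|B|}=d^{-k}$ gives the second case. The isotropic bound is the same argument run over $\operatorname{NC}[k+1]$: here $\operatorname{pTr}_{K(\pi)}(A_1,\ldots,A_k)$ is a matrix of operator norm $\lesssim 1$ which still vanishes when a traceless $A_j$ sits alone in a block of $K(\pi)$ other than the distinguished block $B(k+1)$, while a traceless index that lies in $B(k+1)$ is automatically in a block of size $\ge 2$ (since $k+1$ is also there); so again $\sum_{B'\in K(\pi)}(|B'|-1)\ge\lceil m/2\rceil$, and $\sum_{B\in\pi}(|B|-1)+\sum_{B'\in K(\pi)}(|B'|-1)=k$ gives $\|M\|\lesssim \eta^{-(k-\lceil m/2\rceil)}$ when $d\le 1$ and $\|M\|\lesssim d^{-(k+1)}$ when $d\ge 1$.

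\emph{Main obstacle.} The delicate step is Step 2: one must confirm that the covering argument yields exactly the exponent $\lceil m/2\rceil$, i.e.\ that pairing the marked indices into size-$2$ blocks is optimal among all placements into size-$\ge 2$ blocks and that this worst case is realized by a genuine non-crossing partition, so that the exponent cannot be improved for free. The remaining difficulties are routine: keeping the powers of $N$ and the (normalized versus unnormalized) trace conventions straight across the $\operatorname{pTr}$-expansion, and the mixed-sign detour noted in Step 1.
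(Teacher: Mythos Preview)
The paper does not prove this lemma; it is quoted verbatim from \cite[Lemma~2.4]{CipErdSch22optimal} and used as a black box, so there is no ``paper's own proof'' to compare against. Your sketch is essentially the argument given in that reference: expand $M$ over $\operatorname{NC}[k]$ (resp.\ $\operatorname{NC}[k+1]$), kill every summand in which a traceless $A_j$ sits in a singleton block of the Kreweras complement, invoke $|\pi|+|K(\pi)|=n+1$ together with the elementary covering bound $\sum_{B'\in K(\pi)}(|B'|-1)\ge\lceil m/2\rceil$, and feed in the analytic estimates $|m_\circ[B]|\lesssim \eta^{-(|B|-1)}$ (or $d^{-|B|}$). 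Two small remarks: first, your ``main obstacle'' is not one---for the \emph{upper} bound on $M$ you only need the \emph{lower} bound $\sum_{B'}(|B'|-1)\ge\lceil m/2\rceil$, and the one-line inequality $s-1\ge s/2$ for $s\ge 2$ already gives it; realizability of the extremal partition is only relevant for sharpness. Second, for Step~1 with mixed-sign $z_j$ the divided-difference route is indeed awkward, but the direct estimate $|m[B]|\le \eta^{-(|B|-2)}\int\rho_{\mathrm{sc}}(x)|x-z_i|^{-1}|x-z_j|^{-1}\,\mathrm{d}x$ followed by Cauchy--Schwarz and $\int\rho_{\mathrm{sc}}|x-z|^{-2}=\Im\m(z)/\Im z$ gives $|m[B]|\lesssim\eta^{-(|B|-1)}$ without any sign restriction, which is cleaner than the workaround you sketch. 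Be aware that the $\tfrac{1}{N}$ in the paper's displayed definition of $\operatorname{pTr}_\pi$ should be read as a normalized trace in each factor of the product (as in \cite{CipErdSch22optimal}); otherwise the $N$-powers do not match the stated bounds.
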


\begin{theorem}[{\cite[Theorem 2.5]{CipErdSch22optimal}}]
Let $H$ be an $N\times N$ Wigner matrix and let $G=(H-z)^{-1}$ be its resolvent. Fix $m, k\in \mathbb{N}$ with $m \le k$ and $z_1, \ldots, z_{k+1} \in \bm S$. Fix $C_0>0$, and let $A_1, \ldots, A_k$ be deterministic matrices such that $\left\|A_j\right\| \le C_0$ for all $1 \le j \le k$, and $\Tr A_j=0$ for at least $m$ distinct indices $j$. 
 Then 
\begin{align}\label{eqn:avg}
\left| \Tr\left(G_1 A_1 \cdots G_k A_k-M\left(z_1, A_1, \ldots, A_{k-1}, z_k\right) A_k\right)\right| \prec \begin{cases}
 \eta^{-(k-m / 2)}
& d \leq 1 \\ 
d^{-(k+1)}
& d \geq 1,\end{cases}
\end{align}
and for any deterministic vectors $\boldsymbol{x}, \boldsymbol{y} \in \mathbb{R}^N$ such that $\|\boldsymbol{x}\|+\|\boldsymbol{y}\| \le C_0$, we have 
\begin{align}\label{eqn:iso}
\left|\left\langle\boldsymbol{x},\left(G_1 A_1 \cdots G_k A_k G_{k+1}-M\left(z_1, A_1, \ldots, A_k, z_{k+1}\right)\right) \boldsymbol{y}\right\rangle\right| \prec \begin{cases}
N^{-1/2} \eta^{-(k-m / 2+1 / 2)}
& d \leq 1 \\ 
N^{-1/2} d^{-(k+2)}
& d \geq 1.\end{cases}
\end{align}
Here $G_j:=G\left(z_j\right)$, $\eta:=\min _j\left|\Im z_j\right|$, and $d:=\min_j\operatorname{dist}\left(z_j,[-2,2]\right)$. In \eqref{eqn:avg} and \eqref{eqn:iso}, the numbers $N_0$ in the definition of the $\prec$ notation (recall  Definition~\ref{def:stochasticDomination}) may depend on $k$ and $C_0$. 
\end{theorem}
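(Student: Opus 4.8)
The plan is to establish the two bounds \eqref{eqn:avg} and \eqref{eqn:iso} simultaneously by induction on the number $k$ of resolvents, taking the single-resolvent local laws of Theorem~\ref{thm:iso-single} (together with the classical averaged law $\Tr(G(z)-\m(z))\prec (N\eta)^{-1}$) as the base case $k=1$. To reach the \emph{optimal} $\eta$-exponents near the edge I would not work with $H$ directly but along the Ornstein--Uhlenbeck (``characteristic'') flow $H_t = e^{-t/2}H + \sqrt{1-e^{-t}}\,U$, with $U$ an independent Gaussian Wigner matrix: run each spectral parameter $z_j$ along the characteristics $\dot z_j(t) = -\tfrac12 z_j(t) - \m(z_j(t))$, along which $|\Im z_j(t)|$ grows and thereby regularizes the chain; propagate the local law up to a time $t = N^{-1+\omega}$ for a small $\omega>0$; and finally remove the added Gaussian component by a Green function comparison argument, which is cheap because the moments of $H_t$ and $H$ differ by $O(t)$ and the a priori bounds just obtained for $H_t$ render the resulting error negligible after summing over the $O(N^2)$ entries.

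The heart of the inductive step is a self-consistent equation for the chain $G_1 A_1 \cdots G_k A_k$ (and its isotropic version). Expanding $\E[\langle \bm x, G_1 A_1 \cdots G_k A_k G_{k+1}\bm y\rangle]$ in the entries of $H$ by a cumulant expansion (or, in the dynamical formulation, applying It\^o's formula along the flow), the second-order term produces the action of the \emph{stability operator} of the matrix Dyson equation---whose action subtracts an $\m(z_j)\m(z_{j'})$-multiple of the normalized trace of its argument---on the chain itself, while the leading inhomogeneous term, once traces of single resolvents have been replaced by $\m(z_j)$, is precisely the expression whose resummation over non-crossing partitions and their Kreweras complements reproduces the deterministic approximation $M$ of \eqref{Mdefinition}. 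Higher cumulants are lower order in $N$; chains with fewer resolvents or with extra off-diagonal structure are controlled by the inductive hypothesis together with the algebraic bounds \eqref{eqn:center-avg} and \eqref{eqn:center-iso} on $M$; and the remaining genuinely random ``fluctuation'' term is estimated by a high-moment bound (a martingale inequality in the dynamical version). Inverting the stability operator then closes the equation.

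The improvement by $\eta^{\lceil m/2\rceil}$ when $\Tr A_j = 0$ for $m$ of the indices $j$ is the key edge phenomenon and the reason the edge behaves differently from the bulk. The stability operator has a single near-unstable direction, spanned by the identity, whose spectral gap degenerates only like $\sqrt{\operatorname{dist}(E,\{\pm 2\})+\eta}$ at the edge; a traceless $A_j$ lies in the complementary subspace, on which the operator is boundedly invertible, so each such insertion removes one near-singular factor. The non-crossing-partition bookkeeping of $M$ shows that traceless matrices can only be paired off to yield these gains, which is the origin of the ceiling $\lceil m/2\rceil$. This analysis is carried out uniformly in the regime $d\le 1$ and then matched at $d\asymp 1$ to the off-spectrum regime $d\ge 1$, where all resolvents are bounded and a direct perturbative expansion gives the $d^{-k}$-type bounds.

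I expect the main obstacle to be closing this bootstrap at the optimal scale precisely at the spectral edge, where $\eta$ can be as small as $N^{-1+\tau/10}$, $d$ can be $\ll 1$, and the stability operator is maximally singular: through the self-consistent equation the $k$-resolvent error couples to the $(k\pm1)$-resolvent errors, so one has to set up a hierarchy of norms and a self-improving estimate that upgrades a crude a priori bound to the claimed one without losing any power of $\eta$ or $N$, all while keeping the martingale term along the characteristic flow under control and correctly handling the transition between the $d\le1$ and $d\ge1$ regimes. By comparison the Green function comparison in the final step, and the combinatorial identification of the leading term with $M$, are routine.
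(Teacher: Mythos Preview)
This theorem is not proved in the paper at all: it is stated as a direct citation of \cite[Theorem~2.5]{CipErdSch22optimal} and used as a black box input (together with the deterministic bounds \eqref{eqn:center-avg}--\eqref{eqn:center-iso}) in Section~\ref{s:conclusion} and Appendix~\ref{sec:local-law}. There is therefore no ``paper's own proof'' to compare your proposal against.

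That said, your sketch is a faithful high-level summary of the argument actually given in \cite{CipErdSch22optimal}: the characteristic-flow regularization of the spectral parameters, the hierarchy of self-consistent equations for resolvent chains with the stability operator of the matrix Dyson equation, the Kreweras/non-crossing identification of the deterministic limit $M$, the $\eta^{1/2}$ gain per traceless insertion coming from the orthogonality to the unstable direction at the edge, and the final Green function comparison to remove the Gaussian component. Your assessment of the main difficulty (closing the bootstrap at the optimal $\eta$-scale near the edge without losing powers) is also accurate. So as a description of the external proof your proposal is essentially correct, but for the purposes of the present paper no proof is needed or given.
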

\subsection{Central Limit Theorem for GOE}
We require the following central limit theorem for eigenvector statistics of Gaussian random matrices. It is proved in Appendix~\ref{s:goeclt}. We recall that the Gaussian Orthogonal Ensemble (GOE) is a Wigner matrix with Gaussian entries (with variance matrix as in \Cref{def:wigner}) .
\begin{theorem}[Central Limit Theorem for GOE]\label{thm:GOE-CLT}
Let $H$ be a GOE matrix and fix $\delta\in(0,1)$. 
Let $A=A_N\in\R^{N\times N}$ be a deterministic sequence of traceless matrices such that $A=A^*$, $\|A\|\leq 1$ and $\Tr(A^2)\geq N^{1-\delta}$. 
Let $\ell=\ell_N\in\llbracket1,N\rrbracket$ be a deterministic sequence of indices, and let $\bm u=\bm u_{\ell}^{(N)}$ be the corresponding sequence of $\ell^2$-normalized eigenvectors of $H$. 
Then
\begin{equation}
\sqrt{\frac{ N^2}{2\Tr(A^2)}}\left\langle \bm u,A\bm u\right\rangle \rightarrow \mathcal{N}(0,1),
\end{equation}
with convergence in distribution.
\end{theorem}
\subsection{Eigenvector Thermalization}
We also recall the following eigenvector thermalization bound from \cite[Theorem~2.2]{cipolloni2021eigenstate}.
\begin{theorem}
Let $H$ be a Wigner matrix. 
Let $A=A_N\in\R^{N\times N}$ be a deterministic sequence of traceless matrices such that $\|A\|\leq 1$, let $\ell=\ell_N\subset \llbracket 1, N \rrbracket$ be a deterministic sequence of indices, and let $\bm u=\bm u_{\ell}^{(N)}$ be the corresponding sequence of eigenvectors of $H$. 
Then
\begin{align}\label{eqn:que}
\begin{split}
    \left|\langle \bm u,A\bm u\rangle \right|\prec N^{-1/2}.
\end{split}
\end{align}
\end{theorem}

\section{Regularized observables}\label{sec:regularized-observables}

We retain the conventions stated in \Cref{s:conventions}.

\subsection{Definitions}
We begin by defining notation for the self-overlaps of eigenvectors and typical eigenvalue spacings.
\begin{definition}[Self-overlaps and spacings]\label{def:selfoverlap}
Let $H$ be an $N\times N$ Wigner matrix and let $A\in\mathbb R^{N\times N}$ be a deterministic traceless matrix. 
Define the self-overlap 
of the eigenvector $\bm u_\ell$ by
\begin{align}
\begin{split}
    p_\ell=p_\ell(A)\coloneqq \langle \bm u_\ell,A\bm u_\ell\rangle.
\end{split}
\end{align}
Denote the normalized overlap $p_\ell$ by
\begin{align}\label{eq:notpl}
    \widehat{p}_\ell = \sqrt{\frac{N^2}{2\Tr(A^2)}}\cdot p_\ell.
\end{align}
Denote the typical size of the $\ell$-th eigenvalue gap by
\begin{align}
\begin{split}
    \Delta_\ell=N^{-2/3}\ell^{-1/3}.
\end{split}
\end{align}
\end{definition}
We now prepare to define $v_\ell$, which serves as a smooth regularization of $\widehat p_\ell$. 
\begin{definition}[Smoothed indicator function]
For any $E_1,E_2 \in\R$ with $E_1<E_2$, and $\eta >0$,
let $f_{E_1,E_2, \eta}$ denote 
a function such that
$f_{E_1,E_2,\eta}=1$ on $[E_1,E_2]$, $f_{E_1,E_2,\eta}=0$ on $\R\backslash[E_1-\eta,E_2+\eta]$, and $|f_{E_1,E_2,\eta}'|\leq C\eta^{-1}, |f_{E_1,E_2,\eta}''|\leq C\eta^{-2}$ on $\mathbb{R}$.
\end{definition}
For the next definition, recall that $\gamma_\ell$ denotes the typical location of the $\ell$-th smallest eigenvalue and was defined in \eqref{gammaidef}.
\begin{definition}[Regularized self-overlap]\label{def:regularized}

Let $\delta_i>0$ for $1 \le i \le 5$ be parameters, and let $H$ be a Wigner matrix. Define 
\begin{equation}\label{regularizednotation}
\begin{gathered}
 \eta_\ell\coloneqq\Delta_\ell N^{-\delta_1},\quad I_\ell\coloneqq\left[\gamma_\ell-\Delta_\ell N^{\delta_2}, \gamma_\ell+\Delta_\ell N^{\delta_2}\right],\quad E^{\pm}\coloneqq E \pm \Delta_\ell N^{-\delta_3}, \\
 \nu \coloneqq \Delta_\ell N^{-\delta_4}, \quad\tilde{\eta}_\ell\coloneqq\Delta_\ell N^{-\delta_5}.
\end{gathered}
\end{equation}
Also, set
\[
\varpi \coloneqq  \frac{1}{2} \left(\frac{\ell}{N}\right)^{2/3}, \quad 
\tilde f\coloneqq f_{-\varpi,\varpi,\varpi},  \quad q\coloneqq f_{\ell-1/3 ,\ell+1/3,1/3},\]
and
\begin{equation}\label{vartheta}
\begin{aligned}
    \vartheta\coloneqq -2-N^{-2/3+\delta_1},\quad f_E\coloneqq f_{\vartheta, E^{+},\nu} .
\end{aligned}
\end{equation}
Define 
\begin{equation}\label{eqn:regularized-x}
\begin{aligned}
x(E) \equiv x_\ell(E) =\frac{\eta_\ell}{\pi} \sum_{i} \frac{\widehat{p}_i}{\left(\lambda_i-E\right)^2+\eta_\ell^2}  = \frac{\eta_\ell}{\pi}\sqrt{\frac{N^2}{2\Tr(A^2)}} \Tr(GA\bar G)
\end{aligned}
\end{equation}
and
\begin{equation}\label{eqn:regularized-y}
\begin{aligned}
y(E) \equiv y_\ell(E)&=  \frac{1}{2 \pi} \int_{\mathbb{R}^2} \mathrm{i} \sigma f_E^{\prime \prime}(e) \tilde f(\sigma) \operatorname{Tr} G(e+\mathrm{i} \sigma) \bm{1}\left(|\sigma|>\tilde{\eta}_\ell\right) \mathrm{d} e \, \mathrm{d} \sigma \\
&\quad+\frac{1}{2 \pi} \int_{\mathbb{R}^2}\left(\mathrm{i} f_E(e) \tilde f^{\prime}(\sigma)-\sigma f_E^{\prime}(e) \tilde f^{\prime}(\sigma)\right) \operatorname{Tr} G(e+\mathrm{i} \sigma)\, \mathrm{d} e \, \mathrm{d} \sigma.
\end{aligned}
\end{equation}
Finally, set $\bm \delta = (\delta_1, \dots, \delta_5)$ and define the regularized observable 
\begin{equation}\label{eqn:regularized-observable}
v_\ell\equiv v_\ell(\bm \delta, A)=\int_{I_\ell} x(E) q\left(y_E\right) \mathrm{d} E.
\end{equation}
\end{definition}
\begin{remark}
The definition of $x(E)$ is analogous to the regularization \eqref{introreg} given in the introduction, with the eigenvector entry $\bm u_\ell(k)^2$ there replaced here by the self-overlap.  The definition of $y(E)$ is more subtle, and comes from using the Helffer--Sj\H{o}strand formula to provide a smooth approximation 
to  $\Tr f_E(H)$.  
We refer the reader to the proof of \Cref{lem:reg-ob-3} to see how this specific form of $y(E)$ arises. 

Below, we choose the parameters $\delta_i$ so that
\[
\delta_2 < \delta_3 < \delta_1 < \delta_4 < \delta_5. 
\]
In particular, $f_E$ is a step function regularized on scale smaller than $\eta_\ell$, and $|I_\ell| \gg \Delta_\ell \gg \eta_\ell$. 
\end{remark}

Before stating the main lemma in this section, we need the following several results.
\begin{theorem}[Eigenvalue rigidity {\cite[Theorem 2.2]{erdos2012rigidity}}] Let $H$ be a Wigner matrix. For all $i\in\llbracket1,N\rrbracket$, we have
\begin{equation}\label{eqn:rig}
|\lambda_i-\gamma_i|  \prec\Delta_i.
\end{equation}
\end{theorem}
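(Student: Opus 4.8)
The estimate \eqref{eqn:rig} is the classical eigenvalue rigidity bound of Erd\H{o}s--Yau--Yin, and the plan is to recall its standard derivation from a local semicircle law. The only substantive probabilistic input is an averaged local law for $m_N(z) = \frac{1}{N}\Tr G(z)$, which I would extract from Theorem~\ref{thm:iso-single}: taking $\bm x = \bm y = \bm e_j$ in \eqref{isotropic}, averaging over $j \in \llbracket 1, N \rrbracket$, and improving the first error term via the fluctuation averaging mechanism of \cite{erdos2013local, alex2014isotropic}, one obtains
\begin{equation}\label{plan:ll}
\sup_{z \in \bm S}\big| m_N(z) - \m(z) \big| \prec \frac{1}{N\eta}, \qquad \eta = |\Im z|.
\end{equation}
Even the weaker bound $\sqrt{\Im \m(z)/(N\eta)} + (N\eta)^{-1}$ that \eqref{isotropic} yields directly is enough for what follows, at the cost of more careful bookkeeping near the edge.

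Next I would convert \eqref{plan:ll} into an estimate for the integrated density of states $n(E) = N^{-1}\#\{i : \lambda_i \le E\}$, compared to $n_{\mathrm{sc}}(E) = \int_{-\infty}^{E}\rho_{\mathrm{sc}}(x)\,\d x$. Representing the indicator of $(-\infty, E]$ by an almost-analytic extension with a smooth $O(1)$-scale cutoff supported near $[-2,2]$, the Helffer--Sj\"ostrand formula writes $n(E) - n_{\mathrm{sc}}(E)$ as a planar integral of $\Im(m_N - \m)$ and its derivatives against smooth weights; inserting \eqref{plan:ll}, and controlling the sub-microscopic strip $|\Im z| \le N^{-1+\epsilon}$ using only monotonicity of $n$ and boundedness of $\rho_{\mathrm{sc}}$, one arrives at the uniform counting function bound
\begin{equation}\label{plan:cf}
\sup_{E \in \mathbb{R}}\big| n(E) - n_{\mathrm{sc}}(E)\big| \prec \frac{1}{N}.
\end{equation}

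Finally I would invert \eqref{plan:cf} near the edge, exploiting the square-root vanishing of $\rho_{\mathrm{sc}}$. Fix $i$ and assume $i \le N/2$ (the complementary range is symmetric). Then $\gamma_i + 2 \asymp (i/N)^{2/3}$ and $\rho_{\mathrm{sc}}(x) \asymp (i/N)^{1/3}$ on an interval of length $\asymp (i/N)^{2/3}$ centered at $\gamma_i$. Applying \eqref{plan:ll} just outside $[-2,2]$ shows $\lambda_i$ cannot fall more than $O_\prec(N^{-2/3})$ below $-2$; for the extreme indices $i = O(N^{\epsilon})$ this already gives $|\lambda_i - \gamma_i| \prec N^{-2/3} \asymp \Delta_i$, while for larger $i$ it places $\lambda_i$ inside the interval above, where the density stays $\gtrsim (i/N)^{1/3}$ along the segment joining $\lambda_i$ and $\gamma_i$. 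Since $n(\lambda_i) = i/N + O(1/N)$ and $n_{\mathrm{sc}}(\gamma_i) = i/N$, \eqref{plan:cf} gives $|n_{\mathrm{sc}}(\lambda_i) - n_{\mathrm{sc}}(\gamma_i)| \prec 1/N$; dividing by the density lower bound yields $|\lambda_i - \gamma_i| \prec N^{-1}(N/i)^{1/3} = N^{-2/3}i^{-1/3} = \Delta_i$.

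I expect the difficulty to be concentrated entirely at the spectral edge. If one wanted a self-contained treatment, the genuinely hard part is the local law \eqref{plan:ll} itself --- the self-consistent equation for $m_N$, the stability analysis of its deterministic version near $\pm 2$ where the relevant Lipschitz constant degenerates, and the fluctuation averaging needed for the sharp averaged error. Granting the local law via Theorem~\ref{thm:iso-single}, the remaining care lies in propagating the square-root behavior of $\rho_{\mathrm{sc}}$ through the Helffer--Sj\"ostrand step, so as to obtain the \emph{uniform} $N^{-1}$ bound in \eqref{plan:cf} rather than an edge-degraded one, and through the final inversion; away from the edge the argument is entirely routine.
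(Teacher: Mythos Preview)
Your outline is correct and is precisely the standard route to rigidity taken in the cited reference \cite{erdos2012rigidity}. Note, however, that the paper itself does not give a proof of this theorem: it is quoted directly from the literature, so there is no ``paper's own proof'' to compare against beyond the citation. Your sketch---averaged local law, Helffer--Sj\"ostrand conversion to a uniform $O_\prec(1/N)$ counting-function estimate, and inversion using the square-root edge profile of $\rho_{\mathrm{sc}}$---is exactly the argument of Erd\H{o}s--Yau--Yin.
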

\begin{proposition}[Level repulsion at the edge {\cite[Proposition 5.7]{BL22LInfinity}}] \label{prop:rigidity}
Let $H$ be a Wigner matrix. Then there exists $\epsilon_0>0$ such that for all $\epsilon\in(0,\epsilon_0)$, there exists a constant $C=C(\epsilon)$ such that for all $i\in\llbracket1,\lfloor N/2\rfloor\rrbracket$,
\begin{align}\label{eqn:level-repulsion}
\begin{split}
    \P\left(\lambda_{i+1}-\lambda_i<N^{-2/3-\epsilon}i^{-1/3}\right)\leq CN^{-\epsilon}.
\end{split}
\end{align}
\end{proposition}
\begin{lemma}[{{\cite[Lemma~4.9]{BL22LInfinity}}}]With the definitions in Definition \ref{def:regularized}, for all $\epsilon>0$, we have\footnotemark
\begin{align}\label{eqn:small-remaining-terms}
\sum_{i:|i-\ell| \geqslant N^{\epsilon}} \frac{1}{\left(\lambda_i-\lambda_\ell\right)^2}  \prec N^{4 / 3-\epsilon} \ell^{2 / 3}.
\end{align}
\footnotetext{There is a misprint in \cite[Lemma~4.9]{BL22LInfinity}. The sign of the $\omega$ on the right-hand side of the inequality should be negative.}
\end{lemma}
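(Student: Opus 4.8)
The plan is to replace the random eigenvalues by the deterministic semicircle quantiles via eigenvalue rigidity, and then to estimate the resulting deterministic sum by an integral comparison, exploiting the square-root vanishing of $\rho_{\mathrm{sc}}$ at the edge. We may assume $\ell \le N/2$ (the case $\ell > N/2$ is symmetric), so that $\lambda_\ell$ lies near $-2$ and $\gamma_i + 2 \asymp (i/N)^{2/3}$ for $i \le N/2$. Fix a small $\epsilon' \in (0, \epsilon/10)$. By the rigidity estimate \eqref{eqn:rig}, together with a union bound over $i$, there is an event of probability at least $1 - N^{-D}$ on which $|\lambda_i - \gamma_i| \le N^{\epsilon'} \Delta_i$ holds for all $i \in \llbracket 1, N\rrbracket$ simultaneously.

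Working on this event, I would first show that $|\lambda_i - \lambda_\ell| \ge \tfrac12 |\gamma_i - \gamma_\ell|$ for every $i$ with $|i - \ell| \ge N^\epsilon$. By the triangle inequality this reduces to the deterministic inequality $|\gamma_i - \gamma_\ell| \ge 2 N^{\epsilon'}(\Delta_i + \Delta_\ell)$, which follows from the quantile asymptotics: if $i$ lies within a factor $2$ of $\ell$, then $|\gamma_i - \gamma_\ell| \asymp \Delta_\ell |i - \ell| \ge \Delta_\ell N^\epsilon$ while $\Delta_i + \Delta_\ell \asymp \Delta_\ell$; otherwise $|\gamma_i - \gamma_\ell| \gtrsim N^{-2/3}\max(i,\ell)^{2/3}$, which dominates $N^{\epsilon'}(\Delta_i + \Delta_\ell) \le 2 N^{\epsilon'} N^{-2/3}$ since $\max(i,\ell) \ge N^\epsilon$ and $\epsilon' < \epsilon/10$. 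This is precisely where the hypothesis $|i - \ell| \ge N^\epsilon$ enters: the index gap must beat the polynomial rigidity fluctuation, and it is also what produces the $N^{-\epsilon}$ gain in the final bound.

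It then remains to bound the deterministic sum $\sum_{|i - \ell| \ge N^\epsilon} (\gamma_i - \gamma_\ell)^{-2}$, which I would split by the location of $i$. For $N^\epsilon \le |i - \ell| \le \ell/2$ the mean value theorem applied to $x \mapsto x^{2/3}$ gives $|\gamma_i - \gamma_\ell| \asymp \Delta_\ell |i - \ell|$, so this range contributes at most $\Delta_\ell^{-2} \sum_{k \ge N^\epsilon} k^{-2} \lesssim N^{-\epsilon} \Delta_\ell^{-2} = N^{4/3 - \epsilon} \ell^{2/3}$, which is the dominant term. For $i \le \ell/2$ one has $|\gamma_i - \gamma_\ell| \asymp N^{-2/3}\ell^{2/3}$, so the at most $\ell/2$ such terms contribute $\lesssim N^{4/3}\ell^{-1/3}$. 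For $i \ge 3\ell/2$ one splits further into the lower-edge part $i \le N/2$, where $|\gamma_i - \gamma_\ell| \gtrsim N^{-2/3} i^{2/3}$ and an integral comparison yields $\lesssim N^{4/3} \sum_{i \ge \ell} i^{-4/3} \lesssim N^{4/3}\ell^{-1/3}$, and the part $i > N/2$, where $|\gamma_i - \gamma_\ell| \gtrsim 1$ and the contribution is $\lesssim N$. Summing these pieces gives the claimed bound, the first contribution being dominant in the relevant range of $\ell$. (One may also avoid the explicit quantile asymptotics by working from the telescoping identity $\gamma_i - \gamma_\ell = \sum_j (\gamma_{j+1} - \gamma_j)$ together with $\gamma_{j+1} - \gamma_j \asymp \Delta_j$, which streamlines the casework.)

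I expect the main obstacle to be the passage from the $\lambda$'s to the $\gamma$'s: one must verify that the quantile gap $|\gamma_i - \gamma_\ell|$ dominates the $N^{o(1)}\Delta_i$ rigidity fluctuation uniformly over the entire summation range, which requires tracking the edge behavior of the quantiles in every regime of $i$ relative to $\ell$ — from $i$ a fixed small index up to $i$ of order $N$, and for $\ell$ ranging from an $O(1)$ index to a power of $N$. Once this reduction is in place, the remaining estimate is a routine integral comparison. Note that level repulsion at the edge, \eqref{eqn:level-repulsion}, is not needed here, precisely because the singular terms with $|i - \ell| < N^\epsilon$ have been excluded by hypothesis; it would be required only if one wished to include them.
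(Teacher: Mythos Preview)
The paper does not prove this lemma; it is quoted verbatim from \cite[Lemma~4.9]{BL22LInfinity}, so there is no in-paper argument to compare against. Your approach --- reduce to the deterministic quantiles via rigidity, then control $\sum (\gamma_i-\gamma_\ell)^{-2}$ by splitting according to the ratio $i/\ell$ and using the edge asymptotics $\gamma_j+2\asymp (j/N)^{2/3}$ --- is exactly the standard one and is what the cited proof does.

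One point to tighten in your final assembly: you assert that the near-diagonal piece $\Delta_\ell^{-2}\sum_{k\ge N^\epsilon}k^{-2}\asymp N^{4/3-\epsilon}\ell^{2/3}$ dominates the far pieces $N^{4/3}\ell^{-1/3}$. This is only true when $\ell\ge N^{\epsilon}$. For $\ell<N^{\epsilon}$ the near-diagonal and $i\le\ell/2$ ranges are empty, and the $i\ge 3\ell/2$ range starts at $i\asymp N^{\epsilon}$ rather than at $i\asymp\ell$, giving $\sum_{i\ge N^{\epsilon}}N^{4/3}i^{-4/3}\asymp N^{4/3-\epsilon/3}$. This exceeds the stated right-hand side $N^{4/3-\epsilon}\ell^{2/3}$ by a factor $N^{2\epsilon/3}\ell^{-2/3}$ when $\ell$ is $O(1)$. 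In other words your computation is correct but reveals that the quoted bound should really read $N^{4/3-\epsilon}\big(\ell\vee N^{\epsilon}\big)^{2/3}$, or equivalently $N^{4/3-\epsilon/3}\ell^{2/3}$ after absorbing constants; this is consistent with the footnote's remark that the source contains a misprint. For every application in the present paper only a bound of the form $\Delta_\ell^{-2}N^{-c\epsilon}$ for some $c>0$ is needed, so the discrepancy is harmless, but you should flag it rather than claim the pieces fit under the bound as written.
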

We now fix the parameters used in the definition of \eqref{eqn:regularized-observable} for the rest of the paper. 
\begin{definition}[Parameters]\label{para}
Recall the parameters $\tau\in(0,1)$ in Theorem \ref{thm:CLT}. Suppose that $\lambda_\ell$ satisfies level repulsion estimate \eqref{eqn:level-repulsion} with 
\begin{equation*}
\epsilon=\epsilon_1\defeq\min\left\{\frac{\epsilon_0}{2},10^{-9}\tau\right\}.
\end{equation*}
Fix the parameters appearing in Definition \ref{regularizednotation} to be
\begin{align}\label{deltachoices}
    \delta_1=2\epsilon_1,\quad \delta_2=10^{-2}\epsilon_1,\quad \delta_3=\frac{\epsilon_1}{2},\quad\delta_4=6\epsilon_1,\quad\delta_5=8\epsilon_1,
\end{align}
and fix the parameter $\delta$ in \Cref{thm:CLT} to be 
\begin{align*}
\begin{split}
    \delta=10^{-2}\epsilon_1.
\end{split}
\end{align*}
\end{definition}
\begin{lemma}\label{lem:bound-x-integral}
Under the assumptions of Theorem \ref{thm:CLT}, we have
\begin{align}\label{eqn:bound-x-integral}
    \int_{I_\ell}|x(E)|\chi(E)\,\d E\prec N^{\delta_2+\delta/2},\quad 
\end{align}
where $\chi(E)=\bm 1(\lambda_{\ell}\leq E^+\leq \lambda_{\ell+1})$.
\end{lemma}
\begin{proof}By the QUE bound \eqref{eqn:que} and the assumption $\Tr(A^2)\geq N^{1-\delta}$, it suffices to show
\begin{align}\label{eqn:bound-integral}
\begin{split}
    \sum_{i}\int_{I_\ell}\frac{\eta_\ell}{\pi}\frac{1}{(\lambda_i-E)^2+\eta_\ell^2}\chi(E)\,\d E\prec N^{\delta_2}.
\end{split}
\end{align}
We break the sum \eqref{eqn:bound-integral} into two parts and find that it equals
\begin{align}\label{eqn:integral-break}
\begin{split}
    \sum_{i:|i-\ell|< N^{\delta_2}}\int_{I_\ell}\frac{\eta_\ell}{\pi}\frac{1}{(\lambda_i-E)^2+\eta_\ell^2}\chi(E)\, \d E+\sum_{i:|i-\ell|\geq N^{\delta_2}}\int_{I_\ell}\frac{\eta_\ell}{\pi}\frac{1}{(\lambda_i-E)^2+\eta_\ell^2}\chi(E)\, \d E.
\end{split}
\end{align}
For the first term in \eqref{eqn:integral-break}, using the integral
\begin{align}\label{poisson-identity}
\begin{split}
    \int_{\mathbb R}\frac{\eta_\ell}{E^2+\eta_\ell^2}\, \d E=\pi,
\end{split}
\end{align}
we bound it by 
\begin{align}\label{eqn:bound-integral-break-1}
\begin{split}
    \sum_{i:|i-\ell|< N^{\delta_2}}\int_{I_\ell}\frac{\eta_\ell}{\pi}\frac{1}{(\lambda_i-E)^2+\eta_\ell^2}\chi(E)\, \d E<2N^{\delta_2}.
\end{split}
\end{align}
For the second term in \eqref{eqn:integral-break}, using \eqref{eqn:small-remaining-terms}, rigidity \eqref{eqn:rig}, and the definition of $\chi(E)$, it follows that
\begin{align}\label{eqn:bound-integral-break-2}
\begin{split}
    \sum_{i:|i-\ell|\geq N^{\delta_2}}\int_{I_\ell}\frac{\eta_\ell}{\pi}\frac{1}{(\lambda_i-E)^2+\eta_\ell^2}\chi(E)\, \d E\prec N^{-\delta_2-\delta_1}.
\end{split}
\end{align}
Combining \eqref{eqn:bound-integral-break-1} and \eqref{eqn:bound-integral-break-2} completes the proof of the first bound in \eqref{eqn:bound-x-integral}. 
\end{proof}
The following lemma is our main comparison result for the smoothed observable $v_\ell$. 
\begin{lemma}\label{lem:regularized-observable}
Let $H$ be a Wigner matrix, and let the parameters $\epsilon_1>0$ and $\delta_1,\ldots,\delta_5$ be chosen as in Definition \ref{para}.
Let $g:\R\rightarrow\R$ be a compactly supported smooth function.
Then there exists a constant $c(\tau,g)>0$ such that 
\begin{align*}
\begin{split}
    \left| \E \Big[g\big(\widehat{p}_\ell(A)\big)\Big]-\E \big[g\big(v_\ell(\bm \delta, A)\big) \Big] \right|\le c^{-1} N^{-c}.
\end{split}
\end{align*}
\end{lemma}
Lemma \ref{lem:regularized-observable} is an immediate consequence of Lemmas \ref{lem:reg-ob-1}, \ref{lem:reg-ob-2} and \ref{lem:reg-ob-3} below, which we now state and prove. Analogously to our definition of $E^+$ and $E^-$ in Definition~\ref{def:regularized}, we define 
\[
\lambda_i^+ = \lambda_i + \Delta_i N^{-\delta_3}, \qquad \lambda_i^- = \lambda_i  - \Delta_i N^{-\delta_3}.
\]
We also recall the integral 
\begin{equation}\label{poissonintegral}
\int_{-\infty}^u \frac{ y\, \d x}{x^2 + y^2} = \frac{\pi}{2} + \arctan \left( \frac{u}{y} \right),
\end{equation}
along with the facts
\[
\arctan(x) + \arctan( x^{-1} ) = \operatorname{sgn}(x) \frac{\pi}{2} , \qquad \big| \arctan(x) \big| \le 2|x|.
\]



\begin{lemma}\label{lem:reg-ob-1}
    Maintain the notation and assumptions of Lemma \ref{lem:regularized-observable}.
    Recalling Definition \ref{def:regularized}, we have
\begin{align*}
\begin{split}
    \E \big[g(\widehat{p}_\ell)\big]-\E \left[g\left(\int_{I_\ell} x(E) \chi(E)\right)\right]=O(N^{-\epsilon_1/4}),
\end{split}
\end{align*}
where $\chi(E):= \bm1\left(\lambda_\ell\leq E^+ \leq \lambda_{\ell+1}\right)$.
\end{lemma}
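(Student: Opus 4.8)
The plan is to show that, on a high-probability event, $X:=\int_{I_\ell}x(E)\chi(E)\,\d E$ differs from $\widehat{p}_\ell$ by $O_\prec(N^{-c})$ for some $c>0$, and then to transfer this to the expectations of $g$ using the polynomial growth bound \eqref{eqn:poly-growth} together with crude deterministic size bounds valid off that event.

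First I would fix the good event. Let $\Omega$ be the intersection of the events behind eigenvalue rigidity \eqref{eqn:rig}, the tail bound \eqref{eqn:small-remaining-terms}, and quantum unique ergodicity \eqref{eqn:que} --- the last one applied both to $\I$ and, after extending $(\q_\alpha)_{\alpha\in\I}$ to an orthonormal basis of $\R^N$, to $\I^c$; combining the two QUE bounds with $\max(|\I|,N-|\I|)\ge N/2$ yields $\sup_i|\widehat{p}_i|\prec 1$. Taking an $N^{\epsilon_2}$-slack in each estimate ($\epsilon_2>0$ small, to be chosen at the end) and union bounding over $i$, one has $\P(\Omega^c)\le N^{-D}$ for every $D$. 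Let $\mathcal L$ be the event that $\lambda_{\ell+1}-\lambda_\ell$ and, when $\ell\ge 2$, $\lambda_\ell-\lambda_{\ell-1}$ are both at least $\Delta_\ell N^{-\epsilon_1/4}$; by the level repulsion estimate \eqref{eqn:level-repulsion} with $\epsilon=\epsilon_1/4<\epsilon_0$ (and, for indices near $N$, applied to $-H$), $\P(\mathcal L^c)\le C N^{-\epsilon_1/4}$.

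The crux is the estimate on $\Omega\cap\mathcal L$. Here $\chi$ is the indicator of $E\in[\lambda_\ell-\Delta_\ell N^{-\delta_3},\,\lambda_{\ell+1}-\Delta_\ell N^{-\delta_3}]$, which by \eqref{eqn:rig} and $\delta_2>\epsilon_2$ lies inside $I_\ell$ on $\Omega$. Splitting off the $i=\ell$ term of $x(E)$ gives $X=\widehat{p}_\ell\,A+B$, where $A=\int\chi(E)\tfrac{\eta_\ell}{\pi}\big((\lambda_\ell-E)^2+\eta_\ell^2\big)^{-1}\d E$ and $B$ is the analogous integral of $\tfrac{\eta_\ell}{\pi}\sum_{i\ne\ell}\widehat{p}_i\big((\lambda_i-E)^2+\eta_\ell^2\big)^{-1}$. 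On $\mathcal L$ the point $\lambda_\ell$ sits at distance $\Delta_\ell N^{-\delta_3}$ from the left endpoint of this interval and at distance $(\lambda_{\ell+1}-\lambda_\ell)-\Delta_\ell N^{-\delta_3}\ge\tfrac12\Delta_\ell N^{-\epsilon_1/4}$ from the right endpoint; since $\delta_3=\epsilon_1/2$ and $\epsilon_1/4$ are both strictly below $\delta_1=2\epsilon_1$, each of these distances exceeds $\eta_\ell=\Delta_\ell N^{-\delta_1}$ by a power of $N$, so evaluating the Poisson integral over the complementary region gives $A=1+O(N^{-3\epsilon_1/2})$. For $B$: the terms $i=\ell\pm1$ are controlled because on $\mathcal L$ the eigenvalues $\lambda_{\ell\pm1}$ lie at distance $\gtrsim\Delta_\ell N^{-\delta_3}$ from the interval; the terms $2\le|i-\ell|<N^{\epsilon_2}$ by the same distance lower bound, together with there being only $O(N^{\epsilon_2})$ of them; and the terms $|i-\ell|\ge N^{\epsilon_2}$ by bounding the distance below by $\tfrac12|\lambda_i-\lambda_\ell|$ via \eqref{eqn:rig} and invoking \eqref{eqn:small-remaining-terms}. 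With $\sup_i|\widehat{p}_i|\prec 1$ this yields $|B|\prec N^{-c_0}$ for a suitable $c_0>0$, hence $|X-\widehat{p}_\ell|\prec N^{-c}$ on $\Omega\cap\mathcal L$ with, say, $c=\epsilon_1/2$.

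To conclude I would combine this with a priori bounds. On $\Omega$ alone one still has $|X|\le N^{2\epsilon_2}$: each $i$ with $\lambda_i$ within $\Delta_\ell N^{\epsilon_2}$ of the $\chi$-interval contributes at most $|\widehat{p}_i|\int_\R\tfrac{\eta_\ell}{\pi}\big((\lambda_i-E)^2+\eta_\ell^2\big)^{-1}\d E=|\widehat{p}_i|\le N^{\epsilon_2}$, there are $O(N^{\epsilon_2})$ such $i$ by \eqref{eqn:rig}, and the rest contribute $o(1)$ as in the estimate for $B$ (alternatively, \eqref{eqn:bound-x-integral} and \eqref{eqn:que} give a cruder bound); moreover $|\widehat{p}_\ell|$ and $|X|$ are bounded deterministically by $N^{O(1)}$, since $|p_\ell|\le 2$, $\eta_\ell\ge N^{-O(1)}$, and $|I_\ell|\le N^{O(1)}$. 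Now set $\Phi=|g(\widehat{p}_\ell)-g(X)|$ and split $\E\Phi\le\E[\bm 1_{\Omega\cap\mathcal L}\Phi]+\E[\bm 1_{\Omega\cap\mathcal L^c}\Phi]+\E[\bm 1_{\Omega^c}\Phi]$. The first term is $\le C_0(1+N^{2\epsilon_2})^{C_0}N^{-c}$ by \eqref{eqn:poly-growth} and the mean value theorem, since on $\Omega\cap\mathcal L$ both arguments of $g$ lie in $[-N^{2\epsilon_2},N^{2\epsilon_2}]$ and differ by $O(N^{-c})$. The second is $\le CN^{-\epsilon_1/4}\cdot N^{O(\epsilon_2C_0)}$, using the $N^{2\epsilon_2}$ bounds on $\Omega$ and $\P(\mathcal L^c)\le CN^{-\epsilon_1/4}$. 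The third is $\le N^{-1}$ by Cauchy--Schwarz, using the deterministic $N^{O(1)}$ bound on $|g|$ of either argument and $\P(\Omega^c)\le N^{-D}$ for $D$ large. Choosing $\epsilon_2$ small enough that $2C_0\epsilon_2<c/2$ and $c/2\ge\epsilon_1/4$, these sum to $O(N^{-\epsilon_1/4})$, proving the lemma. The main obstacle is the estimate for $B$ near the spectral edge: because the spacings $\Delta_i$ vary there, passing from the index gap $|i-\ell|$ to the eigenvalue gap $|\lambda_i-\lambda_\ell|$ is delicate, which is exactly what \eqref{eqn:small-remaining-terms} and the edge level repulsion estimate \eqref{eqn:level-repulsion} are built for; the balance between $\delta_3=\epsilon_1/2$, $\eta_\ell=\Delta_\ell N^{-\delta_1}$, and the level repulsion exponent $\epsilon_1/4$ is what forces the rate $N^{-\epsilon_1/4}$.
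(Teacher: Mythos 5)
Your argument is correct and follows essentially the same route as the paper's: both isolate the $i=\ell$ Poisson term (the paper restricts the full-line Poisson identity to $[\lambda_\ell^-,\lambda_{\ell+1}^-]$ and works in expectation, while you work pathwise on a good event and show $A=1+o(1)$), and both control the remaining $i\neq\ell$ terms by splitting at $|i-\ell|\sim N^{\epsilon}$ and combining rigidity, level repulsion, QUE, and \eqref{eqn:small-remaining-terms}. One small adjustment: taking the level-repulsion exponent to be exactly $\epsilon_1/4$ leaves no room to absorb the $N^{O(\epsilon_2 C_0)}$ factor on $\Omega\cap\mathcal L^c$, so as written your final bound is $N^{-\epsilon_1/4+o(1)}$ rather than $O(N^{-\epsilon_1/4})$; choosing the exponent strictly between $\epsilon_1/4$ and $\delta_3=\epsilon_1/2$ (say $\epsilon_1/3$) keeps all your distance comparisons valid and restores the stated rate, mirroring the paper's use of $\P\left(\lambda_{\ell+1}^-\le\lambda_\ell^+\right)\le N^{-\epsilon_1/2}$.
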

\begin{proof}
We first write 
\begin{equation}\label{e0}
   \widehat{p}_\ell=\frac{\eta_\ell}{\pi}\int_{\mathbb R}\frac{\widehat{p}_{\ell}}{(E-\lambda_\ell)^2+\eta_\ell^2}\;\mathrm{d} E. 
\end{equation}

We suppose without loss of generality that $\ell \le N^{1-\tau}$. By the assumption on $g$, \eqref{poissonintegral}, rigidity \eqref{eqn:rig}, and the bound \eqref{eqn:que}, we write
\begin{align}\label{e0.1}
\begin{split}
    \E\big[g(\widehat{p}_\ell)\big]&=\E\left[g\left(\frac{\eta_\ell}{\pi}\int_{E_1}^{E_2}\frac{\widehat{p}_\ell}{(E-\lambda_\ell)^2+\eta_\ell^2}\d E\right)\right]+O_\prec(N^{-\delta_1+\delta_3 +\delta/2})\\
    &=\E\left[g\left(\frac{\eta_\ell}{\pi}\int_{E_1}^{E_2}\frac{\widehat{p}_\ell}{(E-\lambda_\ell)^2+\eta_\ell^2}\d E\right)\right]+O(N^{-\epsilon_1/2}),
\end{split}
\end{align}
where 
\begin{align*}
\begin{split}
    E_1=\lambda_\ell^-,\quad E_2= \max\left\{\lambda_\ell^{+},\lambda_{\ell+1}^-\right\}.
\end{split}
\end{align*}
We now show that the integral over $[E_1,E_2]$ can be approximated by integrating over $[\lambda_\ell^-, \lambda_{\ell+1}^-]$. From \eqref{eqn:level-repulsion} and the parameter choice $\delta_3=\epsilon_1/2$, we have
\begin{align}\label{eqn:level-repulsion-specific}
\begin{split}
\P\left(\lambda_{\ell+1}^-\leq \lambda_\ell^+\right)\leq C N^{-\epsilon_1/2}.
\end{split}
\end{align}
Decomposing the integral
\begin{align*}
\begin{split}
    \int_{E_1}^{E_2}=\int_{\lambda_{\ell}^-}^{\lambda_{\ell+1}^{-}}+\bm 1\left(\lambda_{\ell+1}^-\leq \lambda_\ell^+\right)\int_{\lambda_{\ell+1}^-}^{\lambda_{\ell}^+},
\end{split}
\end{align*}
we have from \eqref{e0.1} that
\begin{align*}
\begin{split}
    \E\big[g(\widehat{p}_\ell)\big]&=\E\left[g\left(\frac{\eta_\ell}{\pi}\int_{\lambda_\ell^-}^{\lambda_{\ell+1}^-}\frac{\widehat{p}_\ell}{(E-\lambda_\ell)^2+\eta_\ell^2}\, \d E\right)\right]+ N^{\delta/2} \cdot O_\prec \big(N^{\delta/2}\P(\lambda_{\ell+1}^-\leq \lambda_\ell^+)\big)+O(N^{-\epsilon_1/2})\\
    &=\E\left[g\left(\frac{\eta_\ell}{\pi}\int_{\lambda_\ell^-}^{\lambda_{\ell+1}^-}\frac{\widehat{p}_\ell}{(E-\lambda_\ell)^2+\eta_\ell^2}\, \d E\right)\right]+O(N^{-\epsilon_1/4}),
\end{split}
\end{align*}
where we used  \eqref{eqn:que} and $\Tr(A^2)\geq N^{1-\delta}$ in the first step and \eqref{eqn:level-repulsion-specific} in the second step.
By rigidity \eqref{eqn:rig},
\begin{align*}
\begin{split}
    |\lambda_\ell-\gamma_\ell|\prec \Delta_\ell,\quad |\lambda_{\ell+1}-\gamma_{\ell+1}|\prec \Delta_\ell,
\end{split}
\end{align*}
which implies using the definitions of $I_\ell$ and $\chi(E)$ that 
\begin{align}\label{e0.2}
\begin{split}
    \E\left[g(\widehat{p}_\ell)\right]=\E\left[g\left(\frac{\eta_\ell}{\pi}\int_{I_\ell}\frac{\widehat{p}_\ell}{(E-\lambda_\ell)^2+\eta_\ell^2}\chi(E)\, \d E\right)\right]+O(N^{-\epsilon_1/4}).
\end{split}
\end{align}
Our next goal is to replace the first term on the right-hand side of \eqref{e0.2} by
\begin{align}
    &\E \left[g\left(\int_{I_{\ell}} x(E) \chi(E)\right)\right] \\ & \quad =\E \left[g \left(\frac{\eta_\ell}{\pi}\int_{I_\ell}\frac{\widehat{p}_\ell}{(E-\lambda_\ell)^2+\eta_\ell^2}\chi(E) \;\mathrm{d} E+\frac{\eta_\ell}{\pi}\sum_{i\neq\ell} \int_{I_{\ell}} \frac{\widehat{p}_i}{\left(\lambda_i-E\right)^2+\eta_\ell^2}\chi(E) \;\mathrm{d} E \right)\right]. \notag
\end{align}
Using mean value theorem, \eqref{eqn:que}, and \eqref{e0.2} in the first step, and \eqref{eqn:que} and \eqref{eqn:small-remaining-terms} in the second step, we have
\begin{align}
    &\left|\E\left[g(\widehat{p}_\ell)\right]-\E\left[g\left(\int_{I_\ell}x(E)\chi(E)\, \d E\right)\right]\right|\notag \\
    &\leq O_\prec\left(N^{\delta/2}\right)\E\left[\sum_{i\neq \ell}\int_{I_\ell}\frac{\eta_\ell}{\pi}\frac{1}{(\lambda_i-E)^2+\eta_\ell^2}\chi(E)\, \d E\right]+O(N^{-\epsilon_1/4})\notag \\
    &= O_\prec\left(N^{\delta/2}\right)\E\left[\sum_{i:1\leq|i-\ell|<N^{\delta_2}}\int_{I_\ell}\frac{\eta_\ell}{\pi}\frac{1}{(\lambda_i-E)^2+\eta_\ell^2}\chi(E)\, \d E\right]+O_\prec(N^{-\delta_1 - \delta_2 - \delta_3 + \delta/2})+O(N^{-\epsilon_1/4}) \notag \\
    &=O_\prec\left(N^{\delta/2}\right)\E\left[\sum_{i:1\leq|i-\ell|<N^{\delta_2}}\int_{I_\ell}\frac{\eta_\ell}{\pi}\frac{1}{(\lambda_i-E)^2+\eta_\ell^2}\chi(E)\, \d E\right]+O(N^{-\epsilon_1/4}).\label{e1}
\end{align} 
Next, we would like to bound the expectation term in  \eqref{e1}. 
We decompose it
into two parts. 

Firstly, for $i>\ell$, we have
\begin{align}
\E\left[\sum_{i:1\leq i-\ell<N^{\delta_2}}\int_{I_\ell}\frac{\eta_\ell}{\pi}\frac{1}{(\lambda_i-E)^2+\eta_\ell^2}\chi(E)\, \d E\right]& \leq N^{\delta_2}\E\left[\int_{-\infty}^{\lambda_{\ell+1}^-}\frac{\eta_\ell}{\pi}\frac{1}{(\lambda_{\ell+1}-E)^2+\eta_\ell^2}\, \d E\right]\notag \\ 
&\prec N^{\delta_2-\delta_1+\delta_3}.\label{eqn:small-terms-1}
\end{align}
Suppose now that $i<\ell$.
On the event $\mathcal B\defeq\{\lambda_\ell-\lambda_{\ell-1}>4\Delta_{\ell}N^{-\delta_3}\}$, we have
\begin{align*}
\begin{split}
    \chi(E)(E-\lambda_i)^2\geq (\lambda_{\ell}-\lambda_{i})^2-2\Delta_\ell N^{-\delta_3}(\lambda_\ell-\lambda_i)\geq \frac{1}{2}(\lambda_\ell-\lambda_i)^2\geq\frac{1}{2}(\lambda_\ell-\lambda_{\ell-1})^2.
\end{split}
\end{align*}
Therefore, 
\begin{align*}
\begin{split}
    \bm 1(\mathcal B)\chi(E)\frac{1}{(E-\lambda_i)^2+\eta_{\ell}^2}\leq \bm 1(\mathcal B)\chi(E)\frac{2}{(\lambda_\ell-\lambda_{\ell-1})^2+\eta_\ell^2}\leq \chi(E)\frac{N^{2\delta_3}}{8\Delta_\ell^2}.
\end{split}
\end{align*}
Now we have
\begin{align}\label{eqn:small-terms-2}
\begin{split}
    \E\left[\sum_{i:1\leq \ell-i<N^{\delta_2}}\int_{I_\ell}\frac{\eta_\ell}{\pi}\frac{1}{(\lambda_i-E)^2+\eta_\ell^2}\chi(E)\, \d E\right]\prec\, N^{\delta_2}\P\left(\mathcal B^c\right)+N^{\delta_2}\eta_\ell\Delta_\ell\frac{N^{2\delta_3}}{\Delta_\ell^2}\leq\,N^{\delta_2-\delta_3},
\end{split}
\end{align}
where we used \eqref{poisson-identity}  and $\lambda_{\ell+1}-\lambda_{\ell}\prec \Delta_\ell$ in the first inequality (to control the length of the interval in the definition of $\chi(E)$), and \eqref{eqn:level-repulsion} in the second inequality.
Combining \eqref{eqn:small-terms-1} and \eqref{eqn:small-terms-2}, we have
\begin{align}\label{eqn:small-terms-all}
\begin{split}
    \E\left[\sum_{i:1\leq|i-\ell|<N^{\delta_2}}\int_{I_\ell}\frac{\eta_\ell}{\pi}\frac{1}{(\lambda_i-E)^2+\eta_\ell^2}\chi(E)\, \d E\right]\prec N^{\delta_2-\delta_3}.
\end{split}
\end{align}

Inserting \eqref{eqn:small-terms-all} into \eqref{e1}, we have
\begin{align*}
\begin{split}
    \left|\E\left[g(\widehat{p}_\ell)\right]-\E\left[g\left(\int_{I_\ell}x(E)\chi(E)\, \d E\right)\right]\right|=O(N^{-\epsilon_1/4}).
\end{split}
\end{align*}
\end{proof}

\begin{lemma}\label{lem:reg-ob-2}
    Maintain the assumptions of Lemma \ref{lem:regularized-observable} and recall Definition \ref{def:regularized}. We have 
\begin{equation}
    \E \left[g\left(\int_{I_\ell} x(E) \chi(E)\; \mathrm{d} E\right)\right] - \E \left[g\left(\int_{I_\ell} x(E) q\left(\Tr f_E(H)\right)\, \mathrm{d} E\right)\right]=O(N^{-\epsilon_1/2}), \label{e4}
\end{equation}
where $\chi(E):= \bm1\left(\lambda_\ell\leq E^+ \leq \lambda_{\ell+1}\right)$.
\end{lemma}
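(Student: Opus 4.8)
\emph{Proof sketch.} Write $X_\chi=\int_{I_\ell}x(E)\chi(E)\,\d E$ and $X_q=\int_{I_\ell}x(E)\,q\big(\Tr f_E(H)\big)\,\d E$. By the mean value theorem and the growth bound \eqref{eqn:poly-growth},
\[
\big|g(X_\chi)-g(X_q)\big|\le C_0\big(1+|X_\chi|+|X_q|\big)^{C_0}\,|X_\chi-X_q|.
\]
Since $x(E)$ is bounded by a fixed power of $N$ deterministically and $0\le\chi,\,q(\Tr f_E(H))\le1$, the quantities $X_\chi$, $X_q$, $X_\chi-X_q$ all obey deterministic polynomial bounds; combined with \eqref{eqn:bound-x-integral} (which gives $|X_\chi|\prec N^{\delta_2}$, and, via $|X_q|\le|X_\chi|+|X_\chi-X_q|$, also $|X_q|\prec N^{\delta_2}$ once the estimate below is in hand) and Cauchy--Schwarz, it suffices to prove $\E|X_\chi-X_q|^2\prec N^{-6\epsilon_1}$: since $C_0\delta_2\le \epsilon_1 C_0/(C_0+1)<\epsilon_1$, the resulting bound on $\big|\E[g(X_\chi)]-\E[g(X_q)]\big|$ is then $N^{C_0\delta_2}\cdot N^{-3\epsilon_1+o(1)}=O(N^{-\epsilon_1/2})$ (in fact $O(N^{-\epsilon_1})$). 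As $|X_\chi-X_q|\le\int_{I_\ell}|x(E)|\,\big|\chi(E)-q(\Tr f_E(H))\big|\,\d E$, which again has a deterministic polynomial bound, the whole lemma reduces to the high-probability estimate $\int_{I_\ell}|x(E)|\,\big|\chi(E)-q(\Tr f_E(H))\big|\,\d E\prec N^{-3\epsilon_1}$.

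Next I locate the set where the two weights can differ. The function $f_E$ is a smoothing of $\mathbf 1_{[-3,E^+]}$ at scale $\Delta_\ell N^{-\delta_4}$, so, writing $\mathcal N(a):=\#\{i:\lambda_i\le a\}$ and using that all eigenvalues exceed $-3$ with overwhelming probability (rigidity \eqref{eqn:rig}), we have $\mathcal N(E^+)\le\Tr f_E(H)\le\mathcal N\big(E^++\Delta_\ell N^{-\delta_4}\big)$. Consequently: if $E^+\in[\lambda_\ell+\Delta_\ell N^{-\delta_4},\,\lambda_{\ell+1}-\Delta_\ell N^{-\delta_4}]$ then $\Tr f_E(H)=\ell$, so $q(\Tr f_E(H))=1=\chi(E)$; while if $E^+\le\lambda_\ell-\Delta_\ell N^{-\delta_4}$ or $E^+\ge\lambda_{\ell+1}$ then $\Tr f_E(H)\le\ell-1$ or $\Tr f_E(H)\ge\ell+1$, so $q(\Tr f_E(H))=0=\chi(E)$. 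Hence $\chi(E)=q(\Tr f_E(H))$ for every $E$ outside
\[
\mathcal T:=\big\{E\in I_\ell:|E^+-\lambda_\ell|\le\Delta_\ell N^{-\delta_4}\big\}\cup\big\{E\in I_\ell:\lambda_{\ell+1}-\Delta_\ell N^{-\delta_4}\le E^+\le\lambda_{\ell+1}\big\},
\]
which has Lebesgue measure $|\mathcal T|\le 3\Delta_\ell N^{-\delta_4}$ and lies within distance $2\Delta_\ell N^{-\delta_3}$ of $\{\lambda_\ell,\lambda_{\ell+1}\}$.

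It remains to estimate $\int_{\mathcal T}|x(E)|\,\d E$. From $x(E)=\tfrac{\eta_\ell}{\pi}\sum_i\tfrac{\widehat p_i}{(\lambda_i-E)^2+\eta_\ell^2}$ and the QUE bound \eqref{eqn:que} — applied to each eigenvector $\bm u_i$ when $|\I|\le N/2$, and to an orthonormal basis of the orthogonal complement of $\operatorname{span}\{\q_\alpha:\alpha\in\I\}$ when $|\I|>N/2$ — one gets $\sup_i|\widehat p_i|\prec1$, hence $\int_{\mathcal T}|x(E)|\,\d E\prec\tfrac{\eta_\ell}{\pi}\sum_i\int_{\mathcal T}\tfrac{\d E}{(\lambda_i-E)^2+\eta_\ell^2}$. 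For $|i-\ell|<N^{\epsilon_1}$ bound the inner integral by $|\mathcal T|/\eta_\ell^2$; since $\delta_4>\delta_1$ we have $|\mathcal T|\le 3\Delta_\ell N^{-\delta_4}\ll\Delta_\ell N^{-\delta_1}=\eta_\ell$, so this bound is effective even if some $\lambda_i$ falls inside $\mathcal T$ (which may happen, as level repulsion for the $\ell$-th gap holds only with probability $1-O(N^{-\epsilon_1})$). For $|i-\ell|\ge N^{\epsilon_1}$, rigidity \eqref{eqn:rig} gives $\operatorname{dist}(\lambda_i,\mathcal T)\ge\tfrac12|\lambda_i-\lambda_\ell|$, so the inner integral is $\le 4|\mathcal T|/|\lambda_i-\lambda_\ell|^2$, and summing over such $i$ via \eqref{eqn:small-remaining-terms} contributes $\prec|\mathcal T|\,N^{4/3-\epsilon_1}\ell^{2/3}=|\mathcal T|\,N^{-\epsilon_1}\Delta_\ell^{-2}$. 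Altogether, using $|\mathcal T|\le3\Delta_\ell N^{-\delta_4}$, $\eta_\ell=\Delta_\ell N^{-\delta_1}$, $\delta_1=2\epsilon_1$ and $\delta_4=6\epsilon_1$,
\[
\int_{\mathcal T}|x(E)|\,\d E\prec\eta_\ell\Big(N^{\epsilon_1}\frac{|\mathcal T|}{\eta_\ell^2}+N^{-\epsilon_1}\frac{|\mathcal T|}{\Delta_\ell^2}\Big)\prec N^{\,\epsilon_1+\delta_1-\delta_4+o(1)}\prec N^{-3\epsilon_1}.
\]
On the complement of the (overwhelmingly likely) event where \eqref{eqn:que}, \eqref{eqn:rig}, and \eqref{eqn:small-remaining-terms} all hold, the deterministic polynomial bounds absorb the contribution. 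This yields the required estimate and hence \eqref{e4}.

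The crux is the last step. The naive pointwise bound $|x(E)|\prec N^{1+o(1)}\eta_\ell^{-1}$ on $\mathcal T$ is far too weak, and one cannot assume the eigenvalues neighboring $\lambda_\ell$ are well separated from $\mathcal T$, since level repulsion at the edge is available only with polynomially small failure probability. The resolution uses two features together: $x$ is a Poisson-type average at scale $\eta_\ell$ while $\mathcal T$ has width $\ll\eta_\ell$, so each of the $O(N^{\epsilon_1})$ nearby eigenvalues contributes only $|\mathcal T|/\eta_\ell^2$; and the a priori summability \eqref{eqn:small-remaining-terms} controls the overwhelming majority of far eigenvalues, the QUE cancellation being already built into $\widehat p_i$. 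Arranging the exponent hierarchy $\delta_3\ll\delta_1\ll\delta_4$ together with $C_0\delta_2\ll\epsilon_1$ is precisely what the parameter choices in Lemma~\ref{lem:regularized-observable} are designed to deliver.
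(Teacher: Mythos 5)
Your argument is correct and follows essentially the same route as the paper's proof: both localize the discrepancy between $\chi(E)$ and $q(\Tr f_E(H))$ to the $E$'s for which an eigenvalue lies within the smoothing width $\Delta_\ell N^{-\delta_4}$ of $E^+$ (the paper via the Lipschitz bound $|q(\Tr\mathbf 1_{[-3,E^+]}(H))-q(\Tr f_E(H))|\le C\,\mathcal N(E^+,E^++\Delta_\ell N^{-\delta_4})$ on the event $\lambda_1\ge-3$, you via the explicit exceptional set $\mathcal T$ keyed to $\lambda_\ell,\lambda_{\ell+1}$), and both then control the integral of $|x|$ over that small set using the same inputs — rigidity, QUE for $\widehat p_i$, and \eqref{eqn:small-remaining-terms} — with matching exponent bookkeeping ($\delta_4$ versus $\delta_1$ and $\delta_2$). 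Your version is organized slightly more sharply (integrating $|x|$ over $\mathcal T$ with a near/far split rather than using $|\mathcal T|\cdot\sup_{E\in I_\ell}|x(E)|$), but the mechanism is identical, so no further comment is needed.
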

\begin{proof} Recall that $\vartheta=-2-N^{-2/3+\delta_1}$ from \eqref{vartheta}.
    Let $\theta= \mathbf{1}_{[\vartheta,E^+]}$ and $\mathcal B$ denote the event $\{\lambda_1\geq \vartheta\}$. By the definition of $\chi(E)$, 
    \begin{align*}
    \begin{aligned}
       \bm 1(\mathcal B) \int_{I_\ell} x(E) \chi(E)\; \mathrm{d} E &= \bm 1(\mathcal B)\int_{I_\ell} x(E) \;\textbf{1}(\lambda_{\ell}\leq E^{+} \leq {\lambda_{\ell+1}}) \; \mathrm{d} E\\ &=  \bm 1(\mathcal B)\int_{I_\ell} x(E)\; \textbf{1}({\cal N}(-\infty, E^+) = \ell) \;\mathrm{d} E\\
        &=\bm 1(\mathcal B)\int_{I_\ell} x(E) \; q(\Tr \theta(H)) \;\mathrm{d} E,
    \end{aligned}
    \end{align*}
where $\mathcal N(E_1,E_2)$ denotes the number of eigenvalues in $[E_1,E_2]$.

By the definition of $f_E$ in  \eqref{vartheta},
\begin{equation}\label{e11}
    \bm 1(\mathcal B)\big|\Tr \theta(H) - \Tr f_E(H)\big| \leq {\cal N}(E^+, E^++\Delta_\ell N^{-\delta_4})=\sum_{i}\bm 1\big(|\lambda_i-E^+|\leq\Delta_\ell N^{-\delta_4}\big) .
\end{equation}
Hence we have
\begin{align}\label{eqn:smoothed-chi-1}
\begin{aligned}
&\bm 1(\mathcal B)\left|\int_{I_\ell} x(E)\chi(E)\;\mathrm{d}E-\int_{I_\ell} x(E)\; q(\Tr f_E(H))\; \mathrm{d} E\right| \\
&=  \bm 1(\mathcal B)\left|\int_{I_\ell} x(E)\; \big[q\big(\Tr \theta(H)\big)-q\big(\Tr f_E(H)\big)\big]\; \mathrm{d} E\right|\\
&\leq C\bm 1(\mathcal B)\int_{I_\ell}\big|x(E)\big|\;\big|\Tr \theta(H) - \Tr f_E(H)\big|\; \mathrm{d} E\\
&\leq C\sum_{i}\int_{I_\ell}\big|x(E)\big|\bm 1\big(|\lambda_i-E^+|\leq \Delta_\ell N^{-\delta_4}\big)\,\d E\\
&\prec N^{-\delta_4/2}\Delta_\ell \sup_{E\in I_\ell}\big|x(E)\big|.
\end{aligned}
\end{align}
In the last step, we integrated in $E$ and used the rigidity estimate \eqref{eqn:rig} to show that only $\prec N^{\delta_2}$ eigenvalues contribute to the sum. By \eqref{eqn:que} and \eqref{eqn:small-remaining-terms} with $\epsilon=2\delta_2$, we have
\begin{align}\label{eqn:smoothed-chi-2}
\begin{split}
    \sup_{E\in I_\ell}|x(E)|\prec \Delta_{\ell}^{-1}N^{\delta_1+2\delta_2+\delta/2}.
\end{split}
\end{align}
Combining \eqref{eqn:smoothed-chi-1} and \eqref{eqn:smoothed-chi-2}, we obtain the desired bound on $\mathcal B$. On $\mathcal B^c$, we simply use the rigidity estimate \eqref{eqn:rig} and \eqref{eqn:smoothed-chi-2}. The proof is complete.
\end{proof}

\begin{lemma}\label{lem:reg-ob-3}
 Maintain the same assumptions as in Lemma \ref{lem:regularized-observable}. 
 We have
    \begin{equation*}
        \E \left[g\left(\int_{I_\ell} x(E) q(\Tr f_E(H))\right)\right]-\E \left[g\left(\int_{I_\ell} x(E) q(y_E) \right)\right]=O(N^{-\epsilon_1/2}).
    \end{equation*}
\end{lemma}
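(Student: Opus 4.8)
Here I would argue that $y_E$ is nothing but the Helffer--Sjöstrand representation of $\Tr f_E(H)$ with the slice $\{|\sigma|\le\tilde\eta_\ell\}$ of one term deleted, that this deleted slice is $O_\prec(N^{-2\epsilon_1})$, and that this gain survives the subsequent operations. Throughout I assume $\ell\le N^{1-\tau}$, the other case being handled by reflecting the spectrum. Using the almost-analytic extension $(e+\iu\sigma)\mapsto(f_E(e)+\iu\sigma f_E'(e))\tilde f(\sigma)$, the Helffer--Sjöstrand formula gives
\begin{equation*}
\Tr f_E(H)=\frac1{2\pi}\int_{\R^2}\Big(\iu\sigma f_E''(e)\tilde f(\sigma)+\iu f_E(e)\tilde f'(\sigma)-\sigma f_E'(e)\tilde f'(\sigma)\Big)\Tr G(e+\iu\sigma)\,\d e\,\d\sigma .
\end{equation*}
Since $\tilde\eta_\ell\ll\kappa$ we have $\tilde f\equiv1$ on $[-\tilde\eta_\ell,\tilde\eta_\ell]$, so comparison with \eqref{eqn:regularized-y} yields exactly
\begin{equation*}
\Tr f_E(H)-y_E=\frac1{2\pi}\int_{\R}f_E''(e)\left(\int_{-\tilde\eta_\ell}^{\tilde\eta_\ell}\iu\sigma\,\Tr G(e+\iu\sigma)\,\d\sigma\right)\d e .
\end{equation*}
(Here $f_E,\tilde f$ are only $C^{1,1}$, which suffices for the formula; one may also mollify without affecting the argument.)

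The main step is to bound this deleted slice. The key point is that $\Re\Tr G(e+\iu\sigma)=\sum_i(\lambda_i-e)\big((\lambda_i-e)^2+\sigma^2\big)^{-1}$ is \emph{even} in $\sigma$, so the dangerous term $\iu\sigma\,\Re\Tr G(e+\iu\sigma)$, which has size $\asymp N$ near the spectral edge, integrates to $0$ over $[-\tilde\eta_\ell,\tilde\eta_\ell]$, leaving only
\begin{equation*}
\int_{-\tilde\eta_\ell}^{\tilde\eta_\ell}\iu\sigma\,\Tr G(e+\iu\sigma)\,\d\sigma=-2\sum_i\int_0^{\tilde\eta_\ell}\frac{\sigma^2\,\d\sigma}{(\lambda_i-e)^2+\sigma^2}=O\bigg(\sum_i\min\Big(\tilde\eta_\ell,\ \tilde\eta_\ell^3(\lambda_i-e)^{-2}\Big)\bigg).
\end{equation*}
Splitting the sum at $|\lambda_i-e|=\tilde\eta_\ell$: rigidity \eqref{eqn:rig} bounds the number of eigenvalues within $\tilde\eta_\ell$ of $e$ by $\prec1$, and the diagonal average of the local law \eqref{isotropic} at scale $\tilde\eta_\ell$ (legitimate since $\tilde\eta_\ell\ge N^{-1+\tau/10}$ when $\ell\le N^{1-\tau}$, and uniform in $e$ by Remark \ref{rmk:stochastic-continuation}) gives $\sum_i\big((\lambda_i-e)^2+\tilde\eta_\ell^2\big)^{-1}=N\tilde\eta_\ell^{-1}\Im m_N(e+\iu\tilde\eta_\ell)\prec\tilde\eta_\ell^{-2}$. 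Hence the inner integral is $O_\prec(\tilde\eta_\ell)$ uniformly for $e\in\operatorname{supp}f_E''$, and since $\int_\R|f_E''|\lesssim(\Delta_\ell N^{-\delta_4})^{-1}$ we obtain $\big|\Tr f_E(H)-y_E\big|\prec\tilde\eta_\ell(\Delta_\ell N^{-\delta_4})^{-1}=N^{\delta_4-\delta_5}=N^{-2\epsilon_1}$.

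Finally I would transfer this through $q$, the $E$-integral, and $g$. Because $|\Tr f_E(H)-y_E|\prec N^{-2\epsilon_1}\ll1$ and $q=f_{\ell-1/3,\,\ell+1/3,\,1/3}$, the integrand $x(E)\big(q(\Tr f_E(H))-q(y_E)\big)$ vanishes unless $\Tr f_E(H)\in[\ell-1,\ell+1]$; on the event $\{\lambda_1\ge-3\}$ (whose complement is negligible and is absorbed by crude deterministic bounds) one has $|\Tr f_E(H)-\mathcal N(-\infty,E^+)|\prec1$, which forces $E^+$ to lie within $\prec 1$ eigenvalues of $\lambda_\ell$, i.e. $\widetilde\chi(E):=\bm1\big(\lambda_{\ell-\lceil N^{\epsilon}\rceil}\le E^+\le\lambda_{\ell+\lceil N^{\epsilon}\rceil}\big)=1$ for arbitrarily small $\epsilon>0$. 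The argument proving \eqref{eqn:bound-x-integral} applies verbatim with $\chi$ replaced by $\widetilde\chi$ (it used only $\widetilde\chi\le1$ and that the truncated Poisson kernel integrates to at most $\pi$), giving $\int_{I_\ell}|x(E)|\widetilde\chi(E)\,\d E\prec N^{\delta_2}$. Therefore $\big|\int_{I_\ell}x(E)(q(\Tr f_E(H))-q(y_E))\,\d E\big|\prec N^{-2\epsilon_1+\delta_2}$, and since $q$ is supported where $\widetilde\chi=1$ both $\int_{I_\ell}x(E)q(\Tr f_E(H))\,\d E$ and $\int_{I_\ell}x(E)q(y_E)\,\d E$ are $\prec N^{\delta_2}$. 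Applying the mean value theorem to $g$ with \eqref{eqn:poly-growth} (so $|g'|$ at the relevant arguments is $\prec N^{C_0\delta_2}$), taking expectations, and discarding the negligible bad events gives
\begin{equation*}
\Big|\E\big[g\big(\textstyle\int_{I_\ell}x(E)q(\Tr f_E(H))\,\d E\big)\big]-\E\big[g\big(\int_{I_\ell}x(E)q(y_E)\,\d E\big)\big]\Big|\prec N^{-2\epsilon_1+(C_0+1)\delta_2}\le N^{-\epsilon_1},
\end{equation*}
using $\delta_2=\epsilon_1/\max\{C_0+1,100\}$; as $N^{-\epsilon_1}=O(N^{-\epsilon_1/2})$, this is the claim.

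The hard part is the second step. The naive bound on $\sigma\,\Tr G(e+\iu\sigma)$ is far too lossy near the edge, where $\Re\Tr G$ is of order $N$, and everything hinges on the $\sigma\mapsto-\sigma$ parity that cancels this contribution, followed by a rigidity/local-law estimate of the surviving quadratic-in-$\sigma$ integral precisely at the borderline scale $\tilde\eta_\ell$. A secondary subtlety in the last step is recognizing that, once this cancellation is in place, the $E$-integral is effectively confined to the QUE window controlled by \eqref{eqn:bound-x-integral}, so the gain $N^{-2\epsilon_1}$ is not swamped by $\sup_E|x(E)|$.
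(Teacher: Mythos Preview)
Your proof is correct and follows the same three-step skeleton as the paper: identify $\Tr f_E(H)-y_E$ as the small-$|\sigma|$ slice of the Helffer--Sj\"ostrand integral, bound that slice by $O_\prec(N^{\delta_4-\delta_5})$, then push through $q$, the $E$-integral, and $g$ by the mean value theorem.

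The differences are in execution, and in both places you are more explicit than the paper. For the slice estimate, the paper simply cites \cite[Lemma~5.1]{KnoYin13} for ``$\sigma\Tr G(e+\iu\sigma)=O_\prec(1)$'' and integrates; you instead observe the $\sigma\mapsto-\sigma$ parity that kills the $\Re\Tr G$ contribution and then bound the surviving $\int_0^{\tilde\eta_\ell}\sigma^2\big((\lambda_i-e)^2+\sigma^2\big)^{-1}\,\d\sigma$ by rigidity plus the local law at scale $\tilde\eta_\ell$. This is a clean self-contained argument for exactly the input the paper imports. For the final step, the paper writes only ``by mean value theorem on $g$ and \eqref{eqn:bound-x-integral}'', which is somewhat loose since \eqref{eqn:bound-x-integral} carries the indicator $\chi(E)$ rather than $q(\cdot)$; your introduction of the widened indicator $\widetilde\chi$ (forced by the support of $q$ together with $|\Tr f_E(H)-\mathcal N(-\infty,E^+)|\prec1$) and your observation that the proof of \eqref{eqn:bound-x-integral} uses only $\chi\le1$ make this step rigorous. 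Both refinements buy self-containedness and a bit of extra care, but they do not change the strategy.
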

\begin{proof}
    We first express $f_E(H)$ in terms of Green functions using the Helffer--Sj\"{o}strand functional calculus (see equation (B.12) of \cite{uni-sine-kernel}):
    \begin{equation*}
    f_E (\lambda) \;=\; \frac1{2\pi}\int_{\R^2}\frac{\mathrm{i} \sigma  f_E ''(e)\tilde f(\sigma)+ \mathrm{i}  f_E (e) \tilde f'(\sigma)-\sigma 
    f_E '(e)\tilde f'(\sigma)}{\lambda-e-\mathrm{i} \sigma} \; \mathrm{d} e \, \mathrm{d} \sigma.
\end{equation*}

Let $G(z) = (H-z)^{-1}$ and recall $\tilde \eta_\ell=\Delta_\ell N^{-\delta_5}$. Then we have
 \begin{align*}
 \begin{aligned}
    \Tr f_E (H) &= \frac{1}{2\pi}\int_{\R^2} \left(\mathrm{i} \sigma  f_E ''(e)\tilde f(\sigma)+ \mathrm{i}  f_E (e) \tilde f'(\sigma)-\sigma f_E '(e)\tilde f'(\sigma) \right) \Tr G(e+\mathrm{i} \sigma) \; \mathrm{d} e \, \mathrm{d} \sigma\\
    &= \frac{1}{2\pi}\int_{\R^2} \left(\mathrm{i}  f_E (e) \tilde f'(\sigma)-\sigma f_E '(e)\tilde f'(\sigma) \right) \Tr G(e+\mathrm{i} \sigma)\; \mathrm{d} e \, \mathrm{d} \sigma\\
    &\quad+\frac{1}{2\pi}\int_{|\sigma|>\tilde{\eta}_\ell}\int \mathrm{i} \sigma  f_E ''(e)\tilde f(\sigma) \Tr G(e+\mathrm{i} \sigma) \;\mathrm{d} e \, \mathrm{d} \sigma\\
    &\quad+\frac{1}{2\pi}\int_{|\sigma|<\tilde{\eta}_\ell}\int \mathrm{i} \sigma  f_E ''(e)\tilde f(\sigma) \Tr G(e+\mathrm{i} \sigma)\; \mathrm{d} e \, \mathrm{d} \sigma\\
    &= \frac{1}{2\pi}\int_{\R^2} \left(\mathrm{i}  f_E (e) \tilde f'(\sigma)-\sigma f_E '(e)\tilde f'(\sigma) \right) \Tr G(e+\mathrm{i} \sigma)\; \mathrm{d} e \, \mathrm{d} \sigma\\
    &\quad+\frac{1}{2\pi }\int_{|\sigma|>\tilde{\eta}_\ell}\int \mathrm{i} \sigma  f_E ''(e)\tilde f(\sigma) \Tr G(e+\mathrm{i} \sigma) \;\mathrm{d} e \, \mathrm{d} \sigma\\
    &\quad -\frac{1}{2\pi}\int_{|\sigma|<\tilde{\eta}_\ell}\int \sigma  f_E ''(e)\tilde f(\sigma) \Im\Tr G(e+\mathrm{i} \sigma)\; \mathrm{d} e \, \mathrm{d} \sigma,
 \end{aligned}
 \end{align*}
where in the last step we use the fact that the left-hand side is real.

We show that the last term is negligible. From {\cite[Lemma 5.1]{KnoYin13}}, we know $\sigma \Im\Tr G(e+\mathrm{i} \sigma) = O_\prec(1)$. 
 Since $\int |f_E ''(e) | = O(\Delta_\ell^{-1}N^{\delta_4})$ and $|\tilde f| \le 1$, the last term is bounded by
\begin{equation*}
    \left|\frac{1}{2 \pi}\int_{|\sigma|<\tilde{\eta}_\ell}\int  \sigma  f_E ''(e)\tilde f(\sigma) \Im\Tr G(e+\mathrm{i} \sigma)\; \mathrm{d} e \, \mathrm{d} \sigma\right| = O_\prec(N^{\delta_4-\delta_5}).
\end{equation*}

Hence by the mean value theorem applied to $q$, and the definition of $y_E$ (see \eqref{eqn:regularized-y}), we know 
\[q\big(\Tr f_E(H)\big)- q(y_E)=O_\prec(N^{\delta_4-\delta_5}).\]
Now by mean value theorem applied to $g$ and \eqref{eqn:bound-x-integral}, we have
\begin{align*}
\begin{split}
    \mathbb{E}\left[g\left(\int_{I_{\ell}} x(E) q\left(\operatorname{Tr} f_E(H)\right)\right)\right]-\mathbb{E}\left[g\left(\int_{I_{\ell}} x(E) q\left(y_E\right)\right)\right]=O_{\prec}\left(N^{\delta_2+\delta_4-\delta_5+\delta/2}\right),
\end{split}
\end{align*}
which completes the proof by the choice of parameters in Definition~\ref{regularizednotation}.
\end{proof}

\section{Two moment comparison and proof of the main theorem}\label{s:conclusion}

We retain the conventions stated in Section~\ref{s:conventions} and the choice of parameters made in \Cref{para}.
 \subsection{Preliminary Lemmas}

 For $z\in \mathbb{C}\backslash\R$, we define the control parameter
\begin{gather}\label{controlparameter}
\Psi(z)=\sqrt{\frac{\operatorname{Im} m_{\mathrm{sc}}(z)}{N \operatorname{Im} z}}+\frac{1}{N |\operatorname{Im} z|}.
\end{gather}

Let $H$ be an $N \times N$ Wigner matrix.
Fix $a,b\in\llbracket1,N\rrbracket$, and define 
$$Q = Q(a,b) = \{q_{ij}\}_{1 \le i,j \le N}\in\matn$$
as follows. Set $q_{ij} = h_{ij}$ if $(i,j) \notin \{ (a,b), (b,a) \}$, and set $q_{ij} = 0$ otherwise. In other words, $Q$ is the matrix obtained by starting with $H$ and replacing entries $h_{ab}$ and $h_{ba}$ by zeros. Given $z \in \mathbb{C} \setminus \mathbb{R}$, set 
\begin{equation}\label{GR}
G = (H - z)^{-1}, \qquad R = (Q - z )^{-1}.
\end{equation}
Let $x^G,x^R,y^G,y^R$ be the quantities in \eqref{eqn:regularized-x} and \eqref{eqn:regularized-y} defined using the resolvents $G$ or $R$, as indicated by the superscript. Finally, let $U=H-Q$.

We summarize some preliminary bounds in the following lemma.
\begin{lemma}\label{lem:prelim-bound}Let $H=\{h_{ij}\}_{i,j=1}^N,G,R,\Psi$ be as defined above. We have
\begin{align}
    |h_{ij}|&\prec N^{-1/2},\label{eqn:entry-bound}\\
    \|G(z)\|&\leq \frac{1}{\eta},\label{eqn:absolute-bound-G}\\
    \|R(z)\|&\leq \frac{1}{\eta},\label{eqn:absolute-bound-R}\\
    C^{-1}\tau^{1/4}N^{-1/2}&\leq\Psi(z)\leq C\tau^{-1/4}N^{-\tau/20},\quad\forall\,z\in\bm S,\label{eqn:psi-bound}
\end{align}
for some constant $C>0$, where $\eta=|\Im z|$.
Moreover, when $z=E+\iu\eta_\ell$ with \[E\in [\gamma_\ell-2\Delta_\ell N^{\delta_2}, \gamma_\ell+2 \Delta_\ell N^{\delta_2}],\] there exists constant $C>0$ such that
\begin{align}\label{eqn:psi-bound-specified}
\begin{split}
    \Psi(z)\leq \frac{C}{N\eta_\ell}.
\end{split}
\end{align}
Further, the analogous claim holds with $\tilde \eta_\ell$ replacing $\eta_\ell$. 
\end{lemma}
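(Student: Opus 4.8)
The plan is to verify the five displayed bounds separately; each follows from the definitions of $\bm S$, $\eta_\ell$, $I_\ell$, $\Delta_\ell$ and the parameter choices fixed in Lemma~\ref{lem:regularized-observable}, together with elementary properties of the semicircle law, so the proof is largely bookkeeping. First, \eqref{eqn:entry-bound} is immediate from the moment hypothesis \eqref{finite-moment}: given $\epsilon,D>0$, Markov's inequality applied to $|\sqrt N h_{ij}|^p$ with $\epsilon p\ge D$ gives $\P(|h_{ij}|>N^{\epsilon-1/2})\le \mu_p N^{-\epsilon p}\le N^{-D}$ for large $N$, uniformly in $i,j$. Next, \eqref{eqn:absolute-bound-G} and \eqref{eqn:absolute-bound-R} hold because $H$ and $Q$ are real symmetric, so $G(z)$ and $R(z)$ are normal with eigenvalues $(\lambda_i-z)^{-1}$ for real eigenvalues $\lambda_i$, and $|\lambda_i-z|\ge|\Im z|=\eta$.

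For \eqref{eqn:psi-bound}, recall that $z=E+\iu\eta\in\bm S$ forces $|E|\le 10/\tau$ and $N^{-1+\tau/10}\le\eta\le 10/\tau$. The upper bound uses the elementary fact $\Im\m(E+\iu\eta)=\int_{-2}^2 \eta\rho_{\mathrm{sc}}(x)\big((x-E)^2+\eta^2\big)^{-1}\d x\le\pi\|\rho_{\mathrm{sc}}\|_\infty\le 1$, so with $\eta\ge N^{-1+\tau/10}$ one gets $\Psi(z)\le\sqrt{N^{-\tau/10}}+N^{-\tau/10}\le 2N^{-\tau/20}$, which is $\le C\tau^{-1/4}N^{-\tau/20}$. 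The lower bound uses $\eta^{-1}\Im\m(E+\iu\eta)=\int_{-2}^2\rho_{\mathrm{sc}}(x)\big((x-E)^2+\eta^2\big)^{-1}\d x\ge\big((2+|E|)^2+\eta^2\big)^{-1}\gtrsim\tau^2$, so $\Psi(z)\ge\sqrt{\Im\m(z)/(N\eta)}\gtrsim\tau N^{-1/2}\ge C^{-1}\tau^{1/4}N^{-1/2}$; the $\tau$-dependent factors are absorbed into $C$, which is permitted by the conventions of Section~\ref{s:conventions}.

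The one bound requiring a little care is \eqref{eqn:psi-bound-specified}, where $z=E+\iu\eta_\ell$, $\eta_\ell=\Delta_\ell N^{-\delta_1}$, $\Delta_\ell=N^{-2/3}\ell^{-1/3}$, and $E\in I_\ell=[\gamma_\ell-\Delta_\ell N^{\delta_2},\gamma_\ell+\Delta_\ell N^{\delta_2}]$. It suffices to show $\Im\m(z)\lesssim(N\eta_\ell)^{-1}$, since this forces $\sqrt{\Im\m(z)/(N\eta_\ell)}\lesssim(N\eta_\ell)^{-1}$ as well. By the symmetry of $\rho_{\mathrm{sc}}$ I take $\ell\le N^{1-\tau}$, so $\gamma_\ell$ lies near the edge $-2$; a standard computation with $\rho_{\mathrm{sc}}(x)\asymp\sqrt{(x+2)_+}$ near $-2$ gives $\gamma_\ell+2\asymp(\ell/N)^{2/3}$, hence $(N\eta_\ell)^{-1}=N^{\delta_1}(N\Delta_\ell)^{-1}\asymp N^{\delta_1}\sqrt{\gamma_\ell+2}$. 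On the other hand, writing $\kappa_E=\operatorname{dist}(E,\{-2,2\})$, the standard edge estimate $\Im\m(E+\iu\eta_\ell)\lesssim\sqrt{\kappa_E+\eta_\ell}$, together with $\kappa_E\le|E-\gamma_\ell|+(\gamma_\ell+2)\le\Delta_\ell N^{\delta_2}+(\gamma_\ell+2)$ and $\eta_\ell\le\Delta_\ell N^{\delta_2}$, gives $\Im\m(z)\lesssim\sqrt{(\gamma_\ell+2)+\Delta_\ell N^{\delta_2}}$. Comparing the two expressions, the desired inequality reduces to $\Delta_\ell N^{\delta_2}\lesssim(\gamma_\ell+2)N^{2\delta_1}$, i.e.\ (after inserting the scales) $N^{\delta_2}\lesssim\ell\,N^{2\delta_1}$; this holds since $\ell\ge 1$ and the parameters satisfy $\delta_2\le\epsilon_1/100<4\epsilon_1=2\delta_1$. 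The right-edge case $\ell\ge N-N^{1-\tau}$ is identical.

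I expect the last point to be the only non-routine step: correctly identifying the quantile scale $\gamma_\ell+2\asymp(\ell/N)^{2/3}$ and checking that it absorbs the window width $\Delta_\ell N^{\delta_2}$ after the $N^{2\delta_1}$ slack — which is precisely where the parameter ordering $\delta_2<2\delta_1$ from Lemma~\ref{lem:regularized-observable} is used. Everything else reduces to the trivial bound $\Im\m\le 1$ and the definitions.
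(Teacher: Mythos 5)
Your proof is correct and follows essentially the same route as the paper's: Markov for \eqref{eqn:entry-bound}, spectral decomposition for \eqref{eqn:absolute-bound-G}--\eqref{eqn:absolute-bound-R}, and bounds on $\Im\m$ combined with the definitions of $\bm S$, $I_\ell$, $\eta_\ell$ for the two $\Psi$ estimates (your treatment of \eqref{eqn:psi-bound-specified} in fact supplies the details the paper leaves as ``not hard to check''). The only cosmetic difference is that for \eqref{eqn:psi-bound} you use the elementary bounds $\Im\m\le 1$ and a direct integral lower bound in place of the two-sided asymptotics $\Im\m\asymp\sqrt{\kappa+\eta}$ with a case split on $|E|\lessgtr 2$; this yields a $\tau$-dependent constant in the lower bound, which is permitted by the conventions of Section~\ref{s:conventions}.
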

\begin{remark}
The interval $[\gamma_\ell-2\Delta_\ell N^{\delta_2}, \gamma_\ell+2 \Delta_\ell N^{\delta_2}]$ is a slightly enlarged version of the interval $I_\ell$ from Definition \ref{def:regularized}.
\end{remark}
\begin{proof}
    Fix any $\epsilon>0,D>0$. By Markov's inequality and the moment assumption \eqref{finite-moment} on $h_{ij}$, we have
    \begin{equation*}
        \P\left(|\sqrt{N}h_{ij}|>N^\epsilon \right)=\P\left(|\sqrt Nh_{ij}|^p>N^{p\epsilon}\right)\leq \mu_pN^{-p\epsilon}\leq N^{-D},
    \end{equation*}
    for large enough $p$ and $N>N_0(\epsilon,D)$. This proves \eqref{eqn:entry-bound}.

    By spectral decomposition, we have
    \begin{equation*}
        G(z)=\sum_i\frac{\bm u_i\bm u_i^\trans}{\lambda_i-z},
    \end{equation*}
    where $\{\lambda_i\}_{i=1}^N$ and $\{\bm u_i\}_{i=1}^N$ are the corresponding eigenvalues and (unit) eigenvectors of $H$. Observe that 
    \begin{equation*}
        \left|\frac{1}{\lambda_i-z}\right|\leq \frac{1}{\eta}.
    \end{equation*}
     Pick any unit vectors $\bm x, \bm y$. We have 
    \begin{equation*}
        |\bm x^\trans G\bm y|\leq \frac{1}{\eta}\bm \sum_i\bm x^\trans \bm u_i\bm u_i^\trans \bm y\le \frac{1}{2\eta} \sum_i (\bm x^\trans \bm u_i)^2 + (\bm u_i^\trans \bm y)^2 \le \frac{1}{\eta}.
    \end{equation*}
    This proves \eqref{eqn:absolute-bound-G}. The inequality \eqref{eqn:absolute-bound-R} follows similarly.

    To obtain the lower bound of $\Psi(z)$, we use the following result (see \cite[Lemma~3.3]{benaych2016lectures}):
    \begin{equation}\label{eqn:bound-im-m}
        \begin{aligned}
        &c\sqrt{\kappa+\eta}\leq|\operatorname{Im} \m(z)|\leq c^{-1}\sqrt{\kappa+\eta} ,&\text{ if }|E|\leq 2,\\
        &\frac{c\eta}{\sqrt{\kappa+\eta}}\leq|\Im \m(z)|\leq\frac{c^{-1}\eta}{\sqrt{\kappa+\eta}},&\text{ if }|E|\geq 2,
        \end{aligned}
    \end{equation}
    for some constant $c>0$, where $E=\Re z$ and $\kappa\equiv\kappa(E)\defeq ||E|-2|$. 

    For $|E|\leq 2$, we have 
    \begin{equation*}
    \begin{aligned}
        &\Psi(z)\geq \sqrt{\frac{c\sqrt{\kappa+\eta}}{N\eta}}+\frac{1}{N\eta}\geq CN^{-1/2}\eta^{-1/4}\geq C\tau^{1/4}N^{-1/2},\\
        &\Psi(z)\leq \sqrt{\frac{c^{-1}\sqrt{\kappa+\eta}}{N\eta}}+\frac{1}{N\eta}\leq C\sqrt{\frac{\tau^{-1/2}}{NN^{-1+\tau/10}}}+\frac{1}{NN^{-1+\tau/10}}\leq C\tau^{-1/4}N^{-\tau/20},
    \end{aligned} 
    \end{equation*}
    where we used $z\in\bm S$ in the last step of the first line and in the second inequality of the second line.

    For $|E|\geq 2$, we have
    \begin{equation*}
    \begin{aligned}
        &\Psi(z)\geq\sqrt{\frac{c}{\sqrt{\kappa+\eta}N}}+\frac{1}{N\eta}\geq\left\{ \begin{aligned}
            CN^{-1/2}\eta^{-1/4}&\text{ if }\kappa\leq \eta\\
            CN^{-1/2}\kappa^{-1/4}&\text{ if }\kappa\geq \eta
        \end{aligned}\right\}\geq C\tau^{1/4}N^{-1/2},\\
        &\Psi(z)\leq \sqrt{\frac{c^{-1}}{\sqrt{\kappa+\eta}N}}+\frac{1}{N\eta}\leq C\sqrt{\frac{1}{N^{-1/2+\tau/20}N}}+\frac{1}{NN^{-1+\tau/10}}\leq CN^{-1/2-\tau/20},
    \end{aligned}
    \end{equation*}
    where we used $z\in\bm S$ in the last step of the first line and in the second inequality of the second line. This completes the proof of \eqref{eqn:psi-bound}.

    Finally, for $z=E+\iu\eta_\ell$ with $E\in I_\ell$, we have
    \begin{align*}
    \begin{split}
        \Psi(z)\leq C\sqrt{\frac{\sqrt{\kappa+\eta_\ell}}{N\eta_\ell}}+\frac{1}{N\eta_\ell}.
    \end{split}
    \end{align*}
    Note that $\kappa\leq C(\ell/N)^{2/3}+N^{\delta_2}\Delta_\ell$. It is not hard to check that $\kappa+\eta_\ell\leq C(N\eta_\ell)^{-2}$. This completes the proof of \eqref{eqn:psi-bound-specified}.
\end{proof}

In the next lemma, we collect several local laws, which will be used frequently in the current section and Section \ref{sec:poly}.
\begin{lemma}\label{lem:three-resolvent-local-law}
Let $H$ be a Wigner matrix, and let $S$ be either $G$ or $R$, as defined in \eqref{GR}.
\begin{enumerate}
\item For all $z \in \bm S$, we have 
\begin{equation}
\big|\langle\bm x,S\bm y\rangle- \langle\bm x,\bm y\rangle \m\big|\prec\Psi,\quad 
|(SS)_{\bm x\bm y}|\prec N\Psi^2,\quad |(S\bar S)_{\x\y}|\prec N\Psi^2\label{eqn:2-resolvents}
\end{equation}
uniformly over all $\bm x, \bm y \in \mathbb{S}^{N-1}$.
\item If $z=E+\iu\eta \in \bm S$ satisfies
$
    E\in I_\ell$ and $\eta=\eta_\ell$,
then for any deterministic $A \in \matn$ such that $\|A\|\leq 1$ and $\Tr A=0$, we have
\begin{align}
|(SA\bar S)_{cd}|&\prec N^{1/2}\Psi,\label{eqn:2-resolvents-traceless}\\
|(SA\bar SS)_{cd}|&\prec N^{3/2}\Psi^{9/4},\label{eqn:3-resolvents-traceless}
\end{align}
uniformly over all $c,d\in\llbracket1,N\rrbracket$.
\end{enumerate}
\end{lemma}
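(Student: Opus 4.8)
\textbf{Proof proposal for Lemma~\ref{lem:three-resolvent-local-law}.}

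The plan is to treat the case $S = G$ first (so that the standard local laws of Section~\ref{sec:multi-resolvent-local-law} apply verbatim), and then transfer to $S = R$ by a resolvent-expansion argument in the single entry $h_{ab}$. For part (1): the first estimate is precisely the isotropic local law Theorem~\ref{thm:iso-single} rewritten with the control parameter $\Psi$ in place of the explicit right-hand side of \eqref{isotropic}, using that $\Psi(z) = \sqrt{\Im\m(z)/(N\eta)} + 1/(N\eta)$ dominates that quantity up to constants. The bounds on $(SS)_{\x\y}$ and $(S\bar S)_{\x\y}$ follow from the $k=1$, $m=0$ case of the isotropic multi-resolvent law \eqref{eqn:iso} with $A_1 = \mathrm{Id}$ (or, for $(S\bar S)$, with the spectral parameters $z$ and $\bar z$): the deterministic approximant $M(z_1,\mathrm{Id},z_2)$ has norm $O(1/\eta) = O(N\Psi^2)$ for $z_1,z_2\in\bm S$ by \eqref{eqn:center-iso}, while the fluctuation in \eqref{eqn:iso} is $O_\prec\big(1/(\sqrt N\,\eta^{3/2})\big)$, which is also $O_\prec(N\Psi^2)$ on $\bm S$; one checks the arithmetic using $N\Psi^2 \asymp \Im\m/\eta + 1/(N\eta^2)$ and $\Im\m \asymp \sqrt{\kappa+\eta}$. (Alternatively one can derive $(G\bar G)_{\x\x}$ directly from $\Im G_{\x\x}$ via the identity $G\bar G = (G - \bar G)/(2\mathrm i\eta)$ when $z_1 = z, z_2 = \bar z$, which makes the $N\Psi^2$ bound transparent.)

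For part (2), with $z = E + \mathrm i\eta_\ell$, $E\in I_\ell$: here we crucially use \eqref{eqn:psi-bound-specified}, $\Psi(z) \le C/(N\eta_\ell)$, together with $\Tr A = 0$, which lets us invoke the multi-resolvent laws with $m = 1$. For \eqref{eqn:2-resolvents-traceless} we apply the isotropic law \eqref{eqn:iso} with $k=1$, $m=1$, matrices $A_1 = A$, and spectral parameters $z_1 = z$, $z_2 = \bar z$: the fluctuation is $O_\prec\big(1/(\sqrt N\,\eta_\ell^{\,k - m/2 + 1/2})\big) = O_\prec\big(1/(\sqrt N\,\eta_\ell)\big)$, and the deterministic term $M(z, A, \bar z)$ has norm $O(1/\eta_\ell^{\,k-\lceil m/2\rceil}) = O(1/\eta_\ell^{\,1/2}) = O(\sqrt N\,\sqrt{N}\Psi\cdot\ldots)$; combining and using $1/\eta_\ell \le C N\Psi$ (from \eqref{eqn:psi-bound-specified}) and $1/\sqrt{\eta_\ell} \le \sqrt{N}\cdot\sqrt{N\Psi^2}^{1/2}\ldots$, one sees the binding term is $O_\prec(1/\eta_\ell) = O_\prec(N^{1/2}\cdot N^{1/2}\Psi) $—more cleanly, $1/(\sqrt N \eta_\ell) \le \sqrt{N}\,\Psi$ and $1/\sqrt{\eta_\ell}\le N^{1/2}\Psi$ both hold in this regime, giving $|(SA\bar S)_{cd}| \prec N^{1/2}\Psi$. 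For \eqref{eqn:3-resolvents-traceless} we use \eqref{eqn:iso} with $k = 2$, $m = 1$, matrices $A_1 = A$, $A_2 = \mathrm{Id}$, spectral parameters $(z,\bar z, z)$ (matching $SA\bar S S$): the fluctuation is $O_\prec\big(1/(\sqrt N\,\eta_\ell^{\,2 - 1/2 + 1/2})\big) = O_\prec\big(1/(\sqrt N\,\eta_\ell^{2})\big)$ and the deterministic piece has norm $O(1/\eta_\ell^{\,2-1}) = O(1/\eta_\ell)$, so the total is $O_\prec(1/(\sqrt N\,\eta_\ell^2))$; writing $\eta_\ell^{-1} \le CN\Psi$ one gets $\eta_\ell^{-2}/\sqrt N \le C N^{3/2}\Psi^2 \le CN^{3/2}\Psi^{9/4}$ using $\Psi \ge cN^{-1/2}$ only in the wrong direction, so instead I track the sharper bound: in this regime $\Psi \asymp 1/(N\eta_\ell)$, hence $N^{3/2}\Psi^{9/4} \asymp N^{3/2}(N\eta_\ell)^{-9/4}$, and one checks $1/(\sqrt N \eta_\ell^2) \le N^{3/2}(N\eta_\ell)^{-9/4}$ is equivalent to $(N\eta_\ell)^{1/4} \le 1$, which fails—so the exponent $9/4$ must be squeezed from a more careful application, e.g. interpolating the $m=1$ and $m=0$ multi-resolvent laws or using the improved edge estimates the authors announce; this is where the new input of Appendix~\ref{sec:local-law} enters.

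\textbf{Transfer from $G$ to $R$.} Since $H$ and $Q$ differ only in the $(a,b)$ and $(b,a)$ entries, $U := H - Q = h_{ab}(\e_a\e_b^\trans + \e_b\e_a^\trans)$ (or $\e_a\e_a^\trans$ scaled, if $a=b$) has rank at most $2$ and operator norm $\prec N^{-1/2}$ by \eqref{eqn:entry-bound}. The resolvent identity gives $R = G + G U R$, and iterating,
\begin{equation*}
R = G + GUG + GUGUG + \cdots + (GU)^{k}R,
\end{equation*}
where each factor $U$ contributes $O_\prec(N^{-1/2})$ and is sandwiched between resolvents evaluated at a single index $a$ or $b$. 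Each term is then a finite sum of products of entries of $G$ of the forms controlled in part (1) (for the expansion of single resolvents, traceless two-resolvent, three-resolvent chains into their $R$-counterparts); because every extra factor of $U$ costs $N^{-1/2}$ while the worst resolvent entry it introduces costs at most $O_\prec(\eta_\ell^{-1}) = O_\prec(N^{1-\tau/10})$—wait, more carefully $\eta_\ell^{-1} \le N^{1+2\epsilon_1}$ which is $\gg N^{1/2}$, so naive truncation does not obviously converge; the correct bookkeeping uses that in the relevant quantities the $U$ insertions land on off-diagonal or near-diagonal resolvent entries where \eqref{eqn:2-resolvents} gives the much better bound $|G_{cd}|\prec \Psi + |\m|\delta_{cd} = O(1)$ for $c\neq d$, so each $U$ insertion genuinely costs a net $O_\prec(N^{-1/2})\cdot O(1) = O_\prec(N^{-1/2})$ and the series can be truncated at a finite order $k = k(D)$ with remainder $\Od$. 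After truncation one estimates the finitely many explicit terms using part (1) applied to $G$, concluding that $R$ satisfies the same bounds as $G$ up to a change of the implicit constants, which is permissible in the $\prec$ notation.

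\textbf{Main obstacle.} The delicate point is the exponent $9/4$ in \eqref{eqn:3-resolvents-traceless}: a direct application of the off-the-shelf multi-resolvent laws of \cite{CipErdSch22optimal} yields only $\Psi^2$ (equivalently an error $N^{3/2}(N\eta_\ell)^{-2}$), which is \emph{not} enough, and obtaining the extra $\Psi^{1/4}$ gain requires the improved edge local law proved in Appendix~\ref{sec:local-law}; extracting that gain—and verifying it survives the $G\to R$ transfer, where each $U$ insertion must not erode the fractional exponent—is the crux of the argument. I expect the proof to cite the appendix for the sharpened bound on $G A\bar G G$ and then propagate it through the rank-two resolvent expansion above, tracking the $\Psi$-powers carefully at each step.
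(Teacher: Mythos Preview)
Your reduction from $R$ to $G$ via resolvent expansion in the single entry $h_{ab}$, and your treatment of part~(1) and of \eqref{eqn:2-resolvents-traceless}, are essentially what the paper does (the paper handles $(G\bar G)_{\x\y}$ by expanding $\sum_k G_{\x k}\bar G_{k\y}$ entrywise and using Cauchy--Schwarz, rather than invoking \eqref{eqn:iso}, but this is cosmetic). However, your proposal for the key estimate \eqref{eqn:3-resolvents-traceless} has a genuine gap. You correctly compute that the off-the-shelf isotropic law \eqref{eqn:iso} with $k=2$, $m=1$ yields only $|(GA\bar GG)_{cd}|\prec 1/(\sqrt N\,\eta_\ell^{2}) \asymp N^{3/2}\Psi^2$, which falls short of $N^{3/2}\Psi^{9/4}$ by a factor $(N\eta_\ell)^{1/4}\gg 1$. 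But you then write that the missing $\Psi^{1/4}$ ``requires the improved edge local law proved in Appendix~\ref{sec:local-law}.'' That appendix \emph{is} the proof of Lemma~\ref{lem:three-resolvent-local-law}; you are citing the lemma to prove itself. Your proposal contains no mechanism for obtaining the sharper exponent.

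The paper's argument for \eqref{eqn:3-resolvents-traceless} is a direct high-moment computation, not an appeal to a black-box local law. One uses $zG=HG-I$ to write $z(GA\bar GG)_{cd}=\sum_k h_{ck}(GA\bar GG)_{kd}-(A\bar GG)_{cd}$, multiplies by $(GA\bar GG)_{cd}^{D-1}(\bar GAG\bar G)_{cd}^{D}$, and applies the cumulant expansion (Lemma~\ref{lem:cumulant-expansion}) in $h_{ck}$. The $r=1$ term returns $-m\,|(GA\bar GG)_{cd}|^{2D}$ plus a finite list of explicit error terms $\alpha_i,\beta_i$ built from longer resolvent chains (up to $(G\bar GAGGA\bar GG)_{dd}$), each of which \emph{can} be bounded using the existing laws \eqref{eqn:center-avg}--\eqref{eqn:iso}. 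The worst of these, $\beta_1$ and $\beta_6$, are $O_\prec(N^{3}\Psi^{9/2})$; after Young's inequality they contribute $N^{3D}\Psi^{9D/2}=(N^{3/2}\Psi^{9/4})^{2D}$ to the $2D$-th moment, while higher cumulants and the remainder are subleading. Since $z+m=-1/\m$ is bounded away from zero, one absorbs the $m$ term into the left side and solves for $\mathbb E\big[|(GA\bar GG)_{cd}|^{2D}\big]\prec N^{3D}\Psi^{9D/2}$. The $\Psi^{1/4}$ gain is thus not an external input but emerges from the self-consistent moment inequality, driven by the fact that the dominant $\beta_i$ happen to be of size $N^{3}\Psi^{9/2}$ rather than $N^{3}\Psi^{4}$.
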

The proof of Lemma \ref{lem:three-resolvent-local-law} is postponed to Appendix \ref{sec:local-law}.

The resolvent expansion formula
\begin{align}\label{eqn:resolvent-expansion-original}
\begin{split}
G&=R-R U R+R U R U R-R U R U R U R+(R U)^4 G
\end{split}
\end{align}
follows immediately from the definitions of $G$, $R$, and $U$. 
It implies 
\begin{align}\label{eqn:resolvent-expansion}
\begin{split}
\begin{aligned}
G   A\bar{G}-R   A\bar{R}= & -R  A \overline{R U R}-R U R  A \bar{R} \\
& +R   A\overline{R U R U R}+R U R   A\overline{R U R}+R U R U R   A\bar{R} \\
& -R   A\overline{R U R U R U R}-R U R   A\overline{R U R U R}-R U R U R   A\overline{R U R}-R U R U R U R   A\bar{R} \\
& +R   A\overline{(R U)^4 G}+R U R   A\overline{(R U)^3 R}+(R U)^2 R   A\overline{(R U)^2 R}+(R U)^3 R   A\overline{R U R}+(R U)^4 G   A\bar{R}.
\end{aligned}
\end{split}
\end{align}
These identities facilitate resolvent expansions for the terms $x^G$  and $y^G$, which are stated in the following lemma. We recall that $\ell$ was fixed earlier in Section~\ref{s:conventions} and that $I_\ell$ was defined in \eqref{regularizednotation}.
\begin{lemma}\label{lem:resolvent-expansion}
Let $H$ be an $N\times N$ Wigner matrix. 
\begin{enumerate}
\item We have 
\begin{equation*}
x^G-x^R=\sum_{r=1}^3 x_r h_{a b}^r+x_{\mathrm{err}},
\end{equation*}
where
\begin{equation*}
 \left|x_i(E)\right| \prec N^{1+\delta/2}\Psi(E + \iu \eta_\ell)^{5/4}, \quad i=1,2,3, \quad
 \left|x_{\mathrm{err}}(E)\right| \prec N^{-1+\delta/2}\Psi(E + \iu \eta_\ell)^{5/4},
\end{equation*}
uniformly over $E\in I_\ell$. Moreover 
\begin{align}\label{xRbound}
\begin{split}
    |x^R(E)|\prec N^{1+\delta/2}\Psi(E + \iu \eta_\ell)^{1/2},
\end{split}
\end{align}
uniformly over $E\in I_\ell$.
\item We have
\begin{equation*}
\operatorname{Tr} G-\operatorname{Tr} R=\sum_{r=1}^3 J_r h_{a b}^r+J_{\mathrm{err}}
\end{equation*}
where
\begin{equation}\label{theji}
|J_i(z)|\prec N \Psi(z )^2, \quad i=1,2,3, \quad
 |J_{\mathrm{err}}(z)|\prec N^{-1} \Psi(z)^2 ,
\end{equation}
uniformly for $z \in \bm S$.
\item We have
\begin{equation*}
y^G-y^R=\sum_{r=1}^3 y_r h_{a b}^r+y_{\mathrm{err}}
\end{equation*}
where
\begin{equation*}
 \left|y_i(E)\right| \prec N^{6 \epsilon_1} \Psi(E + \iu \eta_\ell), \quad i=1,2,3, \quad
 \left|y_{\mathrm{err}}(E)\right|\prec N^{-2  + 6 \epsilon_1}\Psi(E + \iu \eta_\ell),
\end{equation*}
uniformly over $E\in I_\ell$.
\end{enumerate}
\end{lemma}
\begin{proof}
We first present the proofs for the claims about $x_1$ and $x^R$. The others are similar. 

By \eqref{eqn:regularized-x} and \eqref{eqn:resolvent-expansion}, we have
\begin{align*}
\begin{split}
    x_1=\frac{\eta_\ell}{\pi}\sqrt{\frac{N^2}{2\Tr(A^2)}}\frac{1}{1+\delta_{ab}}[(RA\bar RR)_{ab}+(RA\bar RR)_{ba}]+[C],
\end{split}
\end{align*}
where $[C]$ denotes the complex conjugate of the previous terms. 
By \eqref{eqn:psi-bound-specified}, Lemma \ref{lem:three-resolvent-local-law}, and the assumption $\Tr(A^2)\geq  N^{1-\delta}$, we have that $|x_1|\prec N^{1+\delta/2}\Psi^{5/4}$.

For $x^R$ we have
\begin{equation*}
    x^R=\frac{\eta_\ell}{\pi}\sqrt{\frac{N^2}{2\Tr(A^2)}}\Tr(RA\bar R).
\end{equation*}
By definition, we have $M(z,A,\bar z)=|\m(z)|^2A$, which is traceless. From \eqref{eqn:avg} with $A_1 = A$ and $A_2 = I$, we have
    $|x^R|\prec N^{1+\delta/2}\Psi^{1/2}.$

By the resolvent expansion formula \eqref{eqn:resolvent-expansion-original}, we have 
\begin{align*}
\begin{split}
    \operatorname{Tr} G -\operatorname{Tr} R&=-\operatorname{Tr} R U R+\operatorname{Tr} R U R U R-\operatorname{Tr} R U R U R U R+\operatorname{Tr}(R U)^4 G=:\sum_{i=1}^3 J_i h_{ab}^i +J_\err.
\end{split}
\end{align*}
Using the first and second high probability bounds in \eqref{eqn:2-resolvents} together with \eqref{eqn:entry-bound}, we have 
\begin{equation}\label{secondpartJ}
\begin{aligned}
 |J_i|\prec N \Psi^2, \quad i=1,2,3, \quad{\text{and}}\quad
 |J_{\mathrm{err}}|\prec N^{-1}\Psi^2 .
\end{aligned}
\end{equation}
For example, for $J_1$, we have 
\begin{equation*}
\operatorname{Tr} RUR = 2(RR)_{ab}h_{ab}
\end{equation*}
and $(RR)_{jk} \prec N \Psi^2$ for $j,k \in \{a,b\}$, by \eqref{lem:three-resolvent-local-law}. The bounds for $J_2$ and $J_3$ are similar. For $J_\err$, we additionally use \eqref{eqn:entry-bound} to gain a factor of $N^{-2}$ from the expectation of $h^4_{ab}$. 
For example, one term arising in $J_\err$ (which has a leading-order contribution) is 
\[
(GR)_{ab} R_{aa}^2 R_{bb}^2 h_{ab}^2,
\]
and we use $(GR)_{ab} \prec N\Psi^2$, $R_{aa} \prec 1$, $R_{bb} \prec 1$, and $ h_{ab}^2 \prec N^{-2}$. The bound on $(GR)_{ab}$ comes from noting that
\[
(RG)_{jk} = (RR)_{jk}  + \big(R (G - R) \big)_{jk},
\]
and bounding the second term by using \eqref{eqn:resolvent-expansion-original} to expand  $G-R$.

By definition,
\begin{align}
\begin{aligned}\label{theyi}
y_i= & \frac{1}{2 \pi} \int_{\mathbb{R}^2} \mathrm{i} \sigma f_E^{\prime \prime}(e) \widetilde{f}(\sigma) J_i(e+\iu\sigma) \mathbf{1}\left(|\sigma|>\widetilde{\eta}_{\ell}\right) \mathrm{d} e \, \mathrm{d} \sigma \\
& +\frac{1}{2 \pi} \int_{\mathbb{R}^2}\left(\mathrm{i} f_E(e) \widetilde{f}^{\prime}(\sigma)-\sigma f_E^{\prime}(e) \widetilde{f}^{\prime}(\sigma)\right) J_i(e+\iu\sigma)\, \mathrm{d} e \, \mathrm{d} \sigma,\\
y_\err=& \frac{1}{2 \pi} \int_{\mathbb{R}^2} \mathrm{i} \sigma f_E^{\prime \prime}(e) \widetilde{f}(\sigma) J_\err(e+\iu\sigma) \mathbf{1}\left(|\sigma|>\widetilde{\eta}_{\ell}\right) \mathrm{d} e \, \mathrm{d} \sigma \\
& +\frac{1}{2 \pi} \int_{\mathbb{R}^2}\left(\mathrm{i} f_E(e) \widetilde{f}^{\prime}(\sigma)-\sigma f_E^{\prime}(e) \widetilde{f}^{\prime}(\sigma)\right) J_\err(e+\iu\sigma)\, \mathrm{d} e \, \mathrm{d} \sigma,
\end{aligned}
\end{align}
Then using the bound on $J_i$ from \eqref{secondpartJ},
the proof of the bounds on $y_i$ and $y_{\err}$ follows from the proof of \cite[Lemma~7.7]{BloKnoYauYin16}. For completeness, we give the details here.\footnote{The derivation of \cite[Lemma~7.7]{BloKnoYauYin16} appears to contain a misprint. The indicator function $f_E$ there is supported on an interval with constant length, which seems too large to obtain the indicated bounds. We therefore define $f_E$ as in \eqref{vartheta} instead.} 

We first consider the $y_i$ and begin with the first term in the expression for $y_i$ in \eqref{theyi}. We integrate by parts in $e$, use the Cauchy--Riemann equations in the form
\[
\partial_e J_i(e + \iu \sigma) = - \iu \partial_\sigma J_i(e + \iu \sigma),
\]
and then integrate by parts again in $\sigma$. 
This yields 
\begin{align}
& \int_{\mathbb{R}^2} \mathrm{i} \sigma f_E^{\prime \prime}(e) \widetilde{f}(\sigma) J_i(e+\iu\sigma) \mathbf{1}\left(|\sigma|>\widetilde{\eta}_{\ell}\right) \mathrm{d} e \, \mathrm{d} \sigma\notag \\
&=  -  \int_{\mathbb{R}^2} \mathrm{i} \sigma f_E^{\prime }(e) \widetilde{f}(\sigma) \partial_e J_i(e+\iu\sigma) \mathbf{1}\left(|\sigma|>\widetilde{\eta}_{\ell}\right) \mathrm{d} e \, \mathrm{d} \sigma\notag \\
& =  -   \int_{\mathbb{R}^2}  \sigma f_E^{\prime }(e) \widetilde{f}(\sigma) \partial_\sigma J_i(e+\iu\sigma) \mathbf{1}\left(|\sigma|>\widetilde{\eta}_{\ell}\right) \mathrm{d} e \, \mathrm{d} \sigma\notag \\
& =  \int_{\mathbb{R}^2}  \big(\sigma \widetilde{f}'(\sigma)  + \widetilde{f}(\sigma) \big) f_E^{\prime }(e)  J_i(e+\iu\sigma) \mathbf{1}\left(|\sigma|>\widetilde{\eta}_{\ell}\right) \mathrm{d} e \, \mathrm{d} \sigma 
+ \sum_{\pm} \mp \int_{\R} \tilde \eta_\ell 
 f_E^{\prime }(e) \widetilde{f}(\pm \eta_\ell ) J_i(e \pm \iu\eta_\ell)  \,  \mathrm{d} e \label{newterms}
\end{align}
For the first term in \eqref{newterms}, we use  \eqref{theji},  $|\widetilde{f}(\sigma)|\le 1$, $| \sigma \widetilde{f}'(\sigma)| \le 2$, and the definition of $f_E'(e)$,  
to get 
\begin{align}
\begin{split}\label{split1}
 &\left| \int_{\mathbb{R}^2}  \big(\sigma \widetilde{f}'(\sigma)  + \widetilde{f}(\sigma) \big) f_E^{\prime }(e)  J_i(e+\iu\sigma) \mathbf{1}\left(|\sigma|>\widetilde{\eta}_{\ell}\right) \mathrm{d} e \, \mathrm{d} \sigma \right| \\
  & \le 
  2 \int_{\R} \int_{\tilde \eta_\ell}^\infty \big( | \sigma \widetilde{f}'(\sigma)|  +  |\widetilde{f}(\sigma)| \big) |f_E^{\prime }(e) |
  \big| J_i(e+\iu\sigma)\big| \, \mathrm{d} \sigma \, \mathrm{d} e \\
   & \le 
  6 N \int_{\R} \int_{\tilde \eta_\ell}^{2\varpi} \big|f_E^{\prime }(e)\big| 
  \Psi^2(e + \iu \sigma)   \, \mathrm{d} \sigma \, \mathrm{d} e \\
  & \le 
    6 N \left(  \int_{E^+}^{E^+ + \nu} \int_{\tilde \eta_\ell}^{2\varpi} |f_E^{\prime }(e) | \Psi^2(e + \iu \sigma)   \, \mathrm{d} \sigma
     \,  \mathrm{d} e 
   +  \int_{\vartheta - \nu}^{\vartheta} \int_{\tilde \eta_\ell}^1 |f_E^{\prime }(e) | \Psi^2(e + \iu \sigma)   \, \mathrm{d} \sigma
    \, \mathrm{d} e \right).
   \end{split}
\end{align}
Using \eqref{eqn:bound-im-m}, note that
\begin{equation}\label{e:418bound}
\Psi^2(e + \iu \sigma) \le C \left( \frac{ \sqrt{\sigma} + \sqrt{\kappa}}{N\sigma}
+ \frac{1}{N^2\sigma^2}\right).
\end{equation}
We insert this bound for $\Psi^2$ into \eqref{split1} and bound the resulting terms. We begin with 
\begin{align}\label{e:combine1}
\begin{split}
& N
\left(  \int_{E^+}^{E^+ + \nu} \int_{\tilde \eta_\ell}^{2\varpi} |f_E^{\prime }(e) | \frac{1}{N^2\sigma^2}   \, \mathrm{d} \sigma
     \,  \mathrm{d} e 
   +  \int_{\vartheta - \nu}^{\vartheta} \int_{\tilde \eta_\ell}^{2\varpi} |f_E^{\prime }(e) | \frac{1}{N^2\sigma^2}  \, \mathrm{d} \sigma
    \, \mathrm{d} e \right)\\
    &\le C (N \tilde \eta_\ell)^{-1}\\
    &= C N^{6 \epsilon_1}  (N  \eta_\ell)^{-1}\\
    &\le N^{6 \epsilon_1}  \Psi( E + \iu \eta_\ell),
 \end{split}
\end{align}

where the last inequality follows from the definition of $\Psi$ in \eqref{controlparameter}. 
Using $\kappa(e) \le 2 N^{\delta_2}\varpi$ for $e \in [E^+,  E^+ + \nu]$ and $\varpi^{1/2} \le (N\eta_\ell)^{-1}$, we get
\begin{align}
\begin{split}
\label{e:combine2}
&N \int_{E^+}^{E^+ + \nu} \int_{\tilde \eta_\ell}^{2\varpi} |f_E^{\prime }(e) | \left(\frac{\sqrt{\sigma} + \sqrt{\kappa} }{N\sigma} \right)  \, \mathrm{d} \sigma
     \,  \mathrm{d} e \\
&\le
 \int_{\tilde \eta_\ell}^{2\varpi} \sigma^{-1/2} \, \mathrm{d} \sigma  +  2 \int_{\tilde \eta_\ell}^{2\varpi} \frac{N^{\delta_2/2 } \sqrt{\varpi}}{\sigma} \, \mathrm{d} \sigma\\
 &\le \varpi^{1/2} +  N^{\delta_2/2}\varpi^{1/2} \big( \log(\varpi) + \log (\tilde \eta_\ell^{-1}) \big)  \\
 &\le  C N^{\delta_2/2 } \log (N) (N\eta_\ell)^{-1}\\
 &\le N^{\delta_2} \Psi(E + \iu \eta_\ell).
\end{split}
\end{align}
We further have 
\begin{align}
\begin{split}
\label{e:combine3}
& N  \int_{\vartheta - \nu}^{\vartheta} \int_{\tilde \eta_\ell}^{2\varpi} |f_E^{\prime }(e) | \left(\frac{\sqrt{\sigma} + \sqrt{\kappa} }{N\sigma} \right)  \, \mathrm{d} \sigma
    \, \mathrm{d} e \\
&\le N   \int_{\tilde \eta_\ell}^{2\varpi} \left(\frac{\sqrt{\sigma} + N^{-1/3 + \delta_1/2} }{N\sigma} \right)  \, \mathrm{d} \sigma \\
&\le \varpi^{1/2}  + N^{-1/3 + \delta_1/2} \log(N)\\
&\le C (N \eta_\ell)^{-1} \\
&\le C \Psi(E + \iu \eta_\ell).
\end{split}
\end{align}

For the second term in \eqref{newterms}, we note using  similar reasoning to the first term that
\begin{align}
\begin{split}\label{split2}
& \left| \int_{\R} \tilde \eta_\ell 
 f_E^{\prime }(e) \widetilde{f}( \eta_\ell ) J_i(e + \iu\eta_\ell)  \,  \mathrm{d} e \right| \\ &\le 
  N \tilde \eta_\ell \left( \int_{E^+}^{E^+ + \nu} | f_E^{\prime }(e)| \Psi^2( E + \iu \eta_\ell) \, \mathrm{d} e
  +  \int_{\vartheta - \nu}^{\vartheta} | f_E^{\prime }(e)| \Psi^2( E + \iu \eta_\ell) \, \mathrm{d} e\right) 
 \\
 &\le  C N \tilde \eta_\ell   \cdot \frac{1}{(N \eta_\ell)^2} \\
 & \le C \Psi.
 \end{split}
\end{align}
The other term in the summation is  bounded the same way.
\eeb

Next, we consider the second term in the expression for $y_i$ in \eqref{theyi}. 
For the first part of the integrand,  using \eqref{e:418bound} and $\kappa(e) \le 2 N^{\delta_1}\varpi$ for $e \in [\vartheta-\nu,  E^+ + \nu]$  (to control both the $\kappa$ in the bound for $\Psi^2$ and the size of the interval of integration in $e$), we have 
\begin{align}
\begin{split}\label{split3}
\left|
\int_{\mathbb{R}^2} \mathrm{i} f_E(e) \widetilde{f}^{\prime}(\sigma)  J_i(e+\iu\sigma)\, \mathrm{d} e \, \mathrm{d} \sigma\right| 
&\le N \int_{\mathbb{R}^2}  f_E(e) \big|\widetilde{f}^{\prime}(\sigma) \big| \Psi^2(e+\iu\sigma) \, \mathrm{d} e \, \mathrm{d} \sigma \\
&\le 2 N\int_\varpi^{2\varpi} \int_{\vartheta-\nu}^{E^+ + \nu}  \big|\widetilde{f}^{\prime}(\sigma) \big| \Psi^2(e+\iu\sigma)\, \mathrm{d} e \, \mathrm{d} \sigma \\
&\le C N^{1+\delta_1}\varpi
\left( \frac{ N^{\delta_1} \sqrt{\varpi} }{N\varpi}
+ \frac{1}{N^2\varpi^2}\right)\\
&\le  C N^{ 2\delta_1}\sqrt{\varpi}\\
& \le C N^{ 2\delta_1} \Psi(E + \iu \eta_\ell).
\end{split}
\end{align}
Similarly,
\begin{align}\begin{split}\label{split4}
\left|
\int_{\mathbb{R}^2}\sigma f_E^{\prime}(e) \widetilde{f}^{\prime}(\sigma)  J_i(e+\iu\sigma)\, \mathrm{d} e \, \mathrm{d} \sigma
\right| 
&\le N \int_{\mathbb{R}^2}  |f_E^{\prime}(e) | \big|\sigma \widetilde{f}^{\prime}(\sigma) \big| \Psi^2(e+\iu\sigma) \, \mathrm{d} e \, \mathrm{d} \sigma \\
 & \le C N^{1+\delta_2}\varpi
\left( \frac{ N^{\delta_1} \sqrt{\varpi} }{N\varpi}
+ \frac{1}{N^2\varpi^2}\right)\\
& \le N^{2 \delta_1} \Psi(E + \iu \eta_\ell).
\end{split}
\end{align}
Combining \eqref{split1}, \eqref{e:combine1}, \eqref{e:combine2}, \eqref{e:combine3}, \eqref{split2}, \eqref{split3}, and \eqref{split4}, and using the definition of  $\delta_1$ in \eqref{deltachoices}, we obtain the desired conclusion.

The argument for $y_{\mathrm{err}}(E)$ is essentially the same as for the $y_i(E)$ (using the second bound in \eqref{theji}), so we omit it.
\end{proof}

Using \eqref{eqn:entry-bound}, \eqref{eqn:psi-bound}, and Lemma \ref{lem:resolvent-expansion}, and a Taylor expansion of $q(y^G)$ around $y^R$ up to order 3, we have
\begin{align}\label{resolvent-expansion-integral}
\begin{split}
    &\int_{I_\ell}x^Gq(y^G)\, \d E-\int_{I_\ell} x^Rq(y^R)\, \d E\\
    &=\int_{I_\ell}\left(x^R+\sum_{r=1}^3 x_r h_{a b}^r+x_{\mathrm{err}}\right)\cdot
    \left(q(y^R)+\sum_{k=1}^3\frac{q^{(k)}(y^R)}{k!}\left(\sum_{r=1}^3y_rh_{ab}^r+y_\err\right)^k\hspace{-.5em}+ O_\prec(N^{-2+24\epsilon_1}\Psi^4)
    \right) \d E\\
    &\quad -\int_{I_\ell}x^Rq(y^R)\, \d E\\
    &=\sum_{\bm{l} \in \mathcal{L}} P_{\bm l} h_{a b}^{|\bm l|}+O_\prec(N^{-2-c}),
\end{split}
\end{align}
where we define
\begin{equation}
\begin{aligned}
 \mathcal{L}&\coloneqq\left\{\bm{l}=\left(l_0, \ldots, l_k\right) \in \llbracket 0,3 \rrbracket \times \llbracket 1,3 \rrbracket^k: 0 \le k \le 3 , 1 \leqslant|\bm{l}| \leqslant 3\right\} \setminus \{(0,0) \},\\
|\bm{l}|&\coloneqq\sum_{i=0}^k l_i, \\
 P_{\bm{l}}&\coloneqq\int_{I_\ell} \frac{q^{(k)}\left(y^R\right)}{k !} x_{l_0} y_{l_1} \cdots y_{l_k} \mathrm{~d} E,
\end{aligned}
\end{equation}
and $c>0$ depends only on $\tau$. Here we denote $x_0\defeq x^R$.  For the error term in the last equality, we used \eqref{eqn:psi-bound-specified} and \eqref{xRbound} to show that
\begin{align*}
\int_{I_\ell} \Psi(E + \iu \eta_\ell)^{4}  \big|x^R (E) \big|\,  \d E &\le N^{-2/3+ \delta_2} \ell^{-1/3} \cdot N^{1 + \delta/2} \sup_{E \in I_\ell} \Psi^{9/2}(E + \iu \eta_\ell) \\
& \le \Delta_\ell  N^{ 1 + \delta_2 + \delta /2} ( N \eta_\ell)^{-9/2}\\
&\le \Delta_\ell^{-7/2} N^{-7/2} N^{9 \delta_1/2 + \delta_2 + \delta/2}\\
&\le N^{-c}.
\end{align*}
The error terms involving the products of the $x_i$ and $x_\err$ with terms in the expansion of $q(y^G)$ are handled similarly.

Using \eqref{resolvent-expansion-integral}, for smooth and compactly supported $g$, we have
\begin{multline}\label{eqn:resolvent-expansion-integral-poly}
 \mathbb{E} \left[g\left(\int_{I_\ell} x^G q\left(y^G\right) \mathrm{d} E\right)\right]-\mathbb{E} \left[g\left(\int_{I_\ell} x^R q\left(y^R\right) \mathrm{d} E\right)\right]
 =\mathbb{E} \left[\mathcal{A}\right]+O_{\prec}\left(N^{-2-c}\right) \\
 \quad+\mathbb{E} \left[h_{a b}^3\right] \mathbb{E} \left[\sum_{k=1}^3 \frac{1}{k !} g^{(k)}\left(P_{\bm{0}}\right) \sum_{\bm{l}_1, \ldots, \bm l_k \in \mathcal{L}} \bm{1}\left(\sum_{i=1}^k\left|\bm{l}_i\right|=3\right) \prod_{i=1}^k P_{\bm{l}_i}\right],
\end{multline}
where $P_{\bm 0}\defeq \int_{I_\ell}x^Rq(y^R)~\d E$, and $\E[\mathcal A]$ depends only on $R$ and the first two moments of $h_{ab}$.

We need the following lemma, which is proved in Section \ref{sec:poly}.
\begin{lemma}\label{lem:third-moment-terms}
There is a constant $c(\tau)>0$ such that the following holds.
Fix indices $a,b$ such that $a\neq b$, and let $Y$ denote any of the following terms:
\begin{equation}\label{eqn:third-moment-terms}
\begin{aligned}
& g^{(3)}\left(P_{\bm{0}}\right) P_{(1)}^m P_{(0,1)}^n \quad(m+n=3), \\
& g^{(2)}\left(P_{\bm{0}}\right)\left(P_{(2)}+P_{(0,2)}+P_{(1,1)}+P_{(0,1,1)}\right)\left(P_{(1)}+P_{(0,1)}\right), \\
& g^{(1)}\left(P_{\bm{0}}\right)\left(P_{(3)}+P_{(0,3)}+P_{(1,2)}+P_{(2,1)}+P_{(0,1,2)}+P_{(1,1,1)}+P_{(0,1,1,1)}\right).
\end{aligned}
\end{equation}
Then 
\begin{align}
\begin{split}
\big|\mathbb{E}[Y ]\big|\prec N^{-1/2-c}(1 + |A_{ab}| \Psi^{-1}).
\end{split}
\end{align}
\end{lemma}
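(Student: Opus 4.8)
The plan is to establish the bound $|\E[Y]| \prec N^{-1/2-c}$ by exploiting an odd-parity cancellation that arises when taking the expectation, following the \emph{polynomialization} strategy sketched in the introduction. The key observation is that each $P_{\bm l}$ is an integral (over $E \in I_\ell$) of a product of resolvent entries of $R$ (specifically $x_{l_0}$ contributes one $(RA\bar R R)_{\ast\ast}$-type factor together with $l_0$ factors of $R_{\ast\ast}$ or $\bar R_{\ast\ast}$, and each $y_{l_j}$ contributes a Helffer--Sj\"ostrand integral of $\operatorname{Tr}$ of $l_j$ powers of $U$ sandwiched by resolvents), where all entries are indexed only by $a$, $b$, and the coordinate vectors $\q_\alpha$. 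Because $a \neq b$, and because the total degree $\sum_i |\bm l_i| = 3$ is odd in each term appearing in \eqref{eqn:third-moment-terms}, after expanding $R$ in terms of the $a$-th row and column via the standard row-expansion $R = R^{(a)} - R^{(a)}(\text{row/column }a)R$ one obtains a polynomial in the entries $\{h_{ai}\}_{i \neq a}$ whose nonvanishing-expectation part consists only of \emph{odd} monomials in the $h_{ai}$. The expectation of such a monomial, say $\prod_j h_{a i_j}$ with an odd number of factors, forces all indices to coincide in clusters with at least one cluster of odd size $\ge 3$, and each such cluster contributes $\E[h_{ai}^{2r+1}] = O(N^{-(2r+1)/2}) = O(N^{-3/2}) \cdot O(N^{(2r-2)/2})$; carefully bookkeeping the free summation index $i$ one gains a factor $N^{-1/2}$ relative to the naive bound, exactly as in the toy computation with $\mathcal P = \sum h_{ai_1}h_{ai_2}h_{ai_3}$ in the introduction.

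The steps, in order, would be: (i) \textbf{Reduce to a single $P_{\bm l}$ product.} Using the polynomial growth bound \eqref{eqn:poly-growth} on $g$ (hence on $g^{(1)}, g^{(2)}, g^{(3)}$) together with the a priori bound $|P_{\bm 0}| \prec N^{\delta_2}$ (from Lemma~\ref{lem:bound-x-integral}, since $P_{\bm 0} = \int_{I_\ell} x^R q(y^R)\,\d E$ and $|q| \le 1$), it suffices to bound $\E[\prod_{i=1}^k P_{\bm l_i}]$ with $\sum |\bm l_i| = 3$, up to an extra $N^{C_0\delta_2}$ factor that is harmless since $\delta_2$ is chosen tiny; (ii) \textbf{Expand each resolvent entry in the $a$-row.} Apply the resolvent identity relating $R$ to $R^{(a)}$ (the minor with row/column $a$ removed) to write every factor of $R$ as a finite sum of terms, each a product of entries of $R^{(a)}$ (which is independent of $\{h_{ai}\}_i$) times a monomial in $\{h_{ai}\}_{i \neq a}$, truncating the expansion at a high but fixed order with a negligible error (controlled by \eqref{eqn:absolute-bound-R} and \eqref{eqn:entry-bound}); this realizes $P_{\bm l}$ in the form $\sum_{i_1,\dots,i_d} P^{(a)}_{\dots} h_{ai_1}\cdots h_{ai_d}$; (iii) \textbf{Identify the parity.} Verify that when $a \ne b$, the total number of $h_{a\ast}$-factors appearing in $\prod_i P_{\bm l_i}$ has the same parity as $\sum |\bm l_i| = 3$, hence is odd (the $(RA\bar R R)$ factors and the $\operatorname{Tr}$-of-$U$-products factors all carry definite parity in $h_{a\ast}$ that combines correctly); (iv) \textbf{Extract the cancellation.} Take $\E$; only monomials where every $h_{ai}$ appears with multiplicity $\ge 2$ survive, and oddness forces at least one multiplicity $\ge 3$. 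Applying Cauchy--Schwarz as in the introduction's toy computation — $|\E[\prod h_{ai_j}]| \lesssim N^{-1/2}\sqrt{\E[\prod h_{ai_j}^2]}$ — combined with the local law bounds \eqref{eqn:2-resolvents}, \eqref{eqn:2-resolvents-traceless}, \eqref{eqn:3-resolvents-traceless} and the resolvent-expansion estimates of Lemma~\ref{lem:resolvent-expansion} (which already give $|P_{\bm l}| \prec N^{-1/2-c'}$-type bounds after integrating over $I_\ell$ and using \eqref{eqn:psi-bound-specified}) yields the stated $N^{-1/2-c}$.

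The main obstacle I anticipate is Step~(ii)--(iv) bookkeeping: one must verify that the polynomialization procedure respects the parity claim \emph{through} the Helffer--Sj\"ostrand integrals defining $y$ and $x$ (the $E$-integration over $I_\ell$ and the $(e,\sigma)$-integrations commute with the algebraic expansion, but one must check the expansion is uniform enough to integrate term-by-term), and that the surviving "diagonal" monomials, after the $N^{-1/2}$ gain, are still controlled by the improved edge local laws of Lemma~\ref{lem:three-resolvent-local-law} rather than the weaker generic bounds — this is precisely where the edge choice $1 \ll \Delta_\ell/\eta_\ell \ll (N\eta_\ell)^{1/4}$ and the bound \eqref{eqn:psi-bound-specified} $\Psi \lesssim (N\eta_\ell)^{-1}$ must be used to close the estimate with a strictly positive power $c$. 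A secondary subtlety is handling the case where an index $i_j$ in the expansion equals $b$ or lands on a $\q_\alpha$-support coordinate, which must be treated separately (there are only $O(1)$ such "exceptional" contractions and they can be bounded directly using \eqref{eqn:3-resolvents-traceless} without needing the parity gain), mirroring the remark in the introduction that the argument works for all pairs $a,b$ except $O(n)$ exceptional ones.
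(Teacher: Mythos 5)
Your overall strategy --- polynomialize each $P_{\bm l}$ in the entries of the $a$-th row of $Q$, identify the odd parity forced by $\sum_i|\bm l_i|=3$, and extract a factor $N^{-1/2}$ upon taking the expectation while controlling the coefficients with the improved edge local laws --- is exactly the route the paper takes (via the graded-polynomial calculus $O_{\prec,\odd}$, $O_{\prec,\even}$ of Bloemendal--Knowles--Yau--Yin and the key cancellation Lemma~\ref{lem:odd-poly}). However, there are two concrete gaps.

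First, your step (i) does not work as stated. You propose to bound $\E\bigl[g^{(k)}(P_{\bm 0})\prod_i P_{\bm l_i}\bigr]$ by an a priori sup bound on $g^{(k)}(P_{\bm 0})$ times $\bigl|\E[\prod_i P_{\bm l_i}]\bigr|$. But $g^{(k)}(P_{\bm 0})$ is a random variable correlated with the $h_{ai}$'s, and $|\E[XY]|\le \|X\|_\infty\,|\E[Y]|$ is false; the odd-parity cancellation in $\E[\prod_i P_{\bm l_i}]$ is destroyed by multiplication with a generic bounded random prefactor. The paper instead expands $g^{(k)}(P_{\bm 0})$ \emph{itself} as a graded polynomial, $g^{(k)}(P_{\bm 0})=O_{\prec,\even}(N^{C_0\delta_2})+O_{\prec,\odd}(N^{-1/2+\delta/2+C_0\delta_2})$ (equation \eqref{eqn:parity-g}), so that the full product remains odd (or is even with an extra explicit $N^{-1/2}$) and Lemma~\ref{lem:odd-poly} applies to the whole expression.

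Second, your step (iii) asserts that the relevant polynomials are purely odd in $\{h_{ai}\}$, but after the row expansion every quantity decomposes into an odd graded part \emph{plus an even graded part}, and the proof only closes if the even part is already of size $o(N^{-1/2-c})$ without any cancellation. This fails for $x_3$: in the $a$-row expansion, $(RA\bar RR)_{ab}$ has even part of size $N\Psi^2$ (Lemma~\ref{lem:parity-rar-rarr}), and carrying this through the prefactor $\eta_\ell N^{1/2+\delta/2}$ and the integration over $I_\ell$ yields a contribution of order $N^{-1/2+\delta/2+\delta_1+\delta_2}$, which exceeds $N^{-1/2}$. The paper's resolution (Lemma~\ref{lem:parity-x3}) is to expand the offending terms of $x_3$ with respect to the $b$-th row and column instead, where the roles of $a$ and $b$ are interchanged and the even part improves to $N\Psi^3$; the expectation is then taken separately for the $a$-graded and $b$-graded pieces. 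Your proposal contains no analogue of this double expansion, and without it the bound on $g^{(1)}(P_{\bm 0})P_{(3)}$ cannot be obtained.
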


\subsection{Resolvent Comparison}\label{subsec:resolventcomparison}
Given Lemma~\ref{lem:third-moment-terms}, we can conclude by a standard resolvent comparison argument. 
\begin{proof}[Proof of Theorem \ref{thm:CLT}]
Let $W$ be drawn from the Gaussian Orthogonal Ensemble, and let $H$ be a Wigner matrix. We first show that, for every smooth and compactly supported $g$, we have
\begin{align}\label{eqn:comparison}
\begin{split}
\left|\mathbb{E}\left[g(v_{\ell}^W)-g(v_{\ell}^H) \right]\right|\leq c^{-1}N^{-c}
\end{split}
\end{align}
for some constant $c>0$ (depending on $\tau$ and $g$), where $v_{\ell}^W$ and $v_{\ell}^H$ are corresponding regularized observables (see \eqref{eqn:regularized-observable}) for $W$ and $H$ respectively.

To this end, we fix a bijection
\begin{equation*}
\psi:\{(i, j): 1 \leqslant i \leqslant j \leqslant N\} \rightarrow \llbracket 1, \xi_N \rrbracket,
\end{equation*}
where $\xi_N=N(N+1)/2$,
and define the interpolating matrices $H^0,H^1,H^2,\ldots,H^{\xi_N}$ by
\begin{equation*}
h_{i j}^\xi= \begin{cases}h_{i j} & \text { if } \psi(i, j) >\xi, \\ w_{i j} & \text { if } \psi(i, j)\leq\xi,\end{cases}
\end{equation*}
for $i\leq j$. Therefore, $H^0=H$ and $H^{\xi_N}=W$. We may rewrite \eqref{eqn:comparison} as a telescopic summation,
\begin{align}\label{eqn:comparison-telescopic}
\begin{split}
\left|\mathbb{E}\left[g(v_{\ell}^W)-g(v_{\ell}^H) \right]\right|\leq \sum_{\xi=1}^{\xi_N}\left|\mathbb{E}\left[g\left(v_\ell^{H^{\xi}}\right)-g\left(v_\ell^{H^{\xi-1}}\right) \right]\right|.
\end{split}
\end{align}

Fix some $\xi\in\llbracket 1,\xi_N\rrbracket$ and consider the indices $(a,b)$ such that $\psi(a,b)=\xi$. Let $Q^{\xi}$ be the matrix obtained from $H^{\xi}$ by setting $h^\xi_{ab}$ and $h^{\xi}_{ba}$ to zero. Note that $Q^{\xi}$ can also be obtained from $H^{\xi-1}$ by setting $h^{\xi-1}_{ab}$ and $h^{\xi-1}_{ba}$ to zero. We consider the following two cases.
\item First, suppose $a=b$. Lemma \ref{lem:bound-x-integral} and Lemma \ref{lem:resolvent-expansion} imply that, with $Y$ denoting any term from \eqref{eqn:third-moment-terms},
    $|Y|\prec N^{-\tau/30}$,
where we use the upper bound on $\Psi(z)$ from \eqref{eqn:psi-bound-specified}.
Combining with \eqref{eqn:resolvent-expansion-integral-poly}, we have
\begin{align*}
\begin{split}
\left|\mathbb{E}\left[g(v_{\ell}^{H^{\xi}})-g(v_{\ell}^{H^{\xi-1}}) \right]\right|\leq\left|\mathbb{E}\left[g(v_{\ell}^{H^{\xi}})-g(v_\ell^{Q^\xi}) \right]+\left[g(v_\ell^{Q^\xi})-g(v_{\ell}^{H^{\xi-1}}) \right]\right|\leq N^{-3/2-c},
\end{split}
\end{align*}
where we use the fact that the first two moments of Wigner matrices $H^{\xi}$ and $H^{\xi-1}$ are the same, and therefore $\E[\mathcal A]$ in \eqref{eqn:resolvent-expansion-integral-poly} is the same for both cases.

Now, if $a\neq b$. Combining Lemma \ref{lem:third-moment-terms} and \eqref{eqn:resolvent-expansion-integral-poly}, we have
\begin{align*}
\begin{split}
\left|\mathbb{E}\left[g(v_{\ell}^{H^{\xi}})-g(v_{\ell}^{H^{\xi-1}}) \right]\right|\leq\left|\mathbb{E}\left[g(v_{\ell}^{H^{\xi}})-g(v_\ell^{Q^\xi}) \right]+\left[g(v_\ell^{Q^\xi})-g(v_{\ell}^{H^{\xi-1}}) \right]\right|\leq N^{-2-c}(1 + |A_{ab}| \Psi^{-1})
\end{split}
\end{align*}
for some $c>0$.
These two estimates conclude the proof of \eqref{eqn:comparison} for smooth and compactly supported $g$ in view of \eqref{eqn:comparison-telescopic} and the estimate
\begin{equation}
\sum_{ 1 \le j \le N, \, j\neq a} |A_{aj} | \le \sqrt{N} , 
\end{equation}
which is implied by $\| A \| \le 1$. 
Combining \eqref{eqn:comparison} with \Cref{thm:GOE-CLT} and \Cref{lem:regularized-observable}, we have proved 
\begin{equation}\label{smooth-convergence}
\begin{aligned}
    \lim_{N \rightarrow \infty} \E\big[g(\widehat p_\ell)-g(X)\big] = 0,
\end{aligned}
\end{equation}
for smooth and compactly supported $g$, where $X$ is a standard Gaussian random variable.

For compactly supported but not necessarily continuous $g$, \eqref{smooth-convergence} can be proved by approximating $g$ by the smooth function $g*\gamma_\epsilon$, where $\gamma : \R \rightarrow \R$ is any nonnegative, smooth, compactly supported function that integrates to one, $\gamma_\epsilon(x) = \epsilon^{-1} \gamma( x / \epsilon)$, and we take $\epsilon \rightarrow 0$.  
This implies that $\widehat p_\ell$ converges to standard Gaussian random variable in distribution (see  \cite[Theorem 13.16 (vii)]{klenke2013probability}).
\end{proof}

\section{Proof of Lemma \ref{lem:third-moment-terms}}\label{sec:poly}
We now fix indices $a,b\in\llbracket1,N\rrbracket$ such that $a\neq b$, and carry this choice throughout the current section. All of the bounds stated below are uniform in the choice of $a$ and $b$.

Recall from \eqref{e:semicircle} that $\m$ denotes the Stieltjes transform of the semicircle law, which is deterministic and satisfies
\begin{equation}\label{definingequation}
\m(z)+z+\frac{1}{\m(z)}=0.
\end{equation}
Recall from the discussion below \eqref{controlparameter} that $Q$ is the matrix obtained by setting $h_{a b}, h_{b a}$ in $H$ to 0, and $R$ is the resolvent of $Q$. Let $Q^{(a)}$ be the matrix obtained by setting $a$-th row and column of $H$ to $0$ and let $R^{(a)}$ be the resolvent of $Q^{(a)}$. Then it follows from the 
 fact that the inverse of a block matrix can be computed block-by-block that 
\begin{equation}\label{eqn:ra-def}
        \left(R^{(a)}\right)_{ij}=\begin{cases}
            0,&\text{ if exactly one of }i,j\text{ is }{a},\\
            -z^{-1},&\text{ if }i=j={a},\\
            W_{ij},&\text{ otherwise},
        \end{cases}
    \end{equation}
 
where $W$ is the resolvent of the $(N-1) \times (N-1)$ matrix with entries $(Q_{ij})_{i,j \in T}$ for $T = \{1,\dots, N\} \setminus \{ a \}$. We set $W_{ij} =0$ when at least one of $i$ and $j$ equals $a$. 

We now state some necessary local laws for $R^{(a)}$.
\begin{lemma}\label{lem:three-resolvent-local-law-ra}
    Let $H$ be a Wigner matrix.
    \begin{enumerate}
        \item For all $z\in\bm S$, we have
        \begin{equation}
            \left|\ra_{\x\y}-\langle\boldsymbol{x}, \boldsymbol{y}\rangle m_{\mathrm{sc}}\right| \prec \Psi, \quad\left|(\ra\ra)_{\x \y}\right| \prec N \Psi^2, \quad\left|(\ra\rab)_{\x \y}\right| \prec N \Psi^2 ,\label{eqn:2-resolvents-ra}
        \end{equation}
        
         for all  $\x,\y\in\mathbb S^{N-1}$ such that at least one of $\x,\y$ is $\e_s$ with some $s\neq a$, and $c,d\in\llbracket1,N\rrbracket$ such that $c,d\neq a$.
        \item Furthermore, if $z=E+\iu\eta\in\bm S$ satisfies
        \begin{equation*}
            E \in I_{\ell}, \quad \eta=\eta_{\ell} ,
        \end{equation*}
        then for any deterministic $A\in\operatorname{Mat}_N$ such that $\|A\|\leq 1$ and $\Tr A=0$, we have
        \begin{align}
            \left|(\ra A \rab)_{c d}\right|  &\prec N^{1 / 2} \Psi,\label{eqn:2-resolvents-traceless-ra} \\
            \left|(\ra A \rab \ra)_{c d}\right|  &\prec N^{3/2} \Psi^{9 / 4},\label{eqn:3-resolvents-traceless-ra}
        \end{align}
        uniformly over all $c,d\in\llbracket1,N\rrbracket$ such that $c\neq a$ and $d\neq a$.
    \end{enumerate}
\end{lemma}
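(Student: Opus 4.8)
The plan is to derive each bound directly from its counterpart in Lemma~\ref{lem:three-resolvent-local-law} for the resolvent $R$, using only the identity \eqref{eqn:ra-def}. That identity says the entries of $\ra$ coincide with those of $R$ off the $a$-th row and column, while $(\ra)_{ia}=(\ra)_{ai}=0$ for $i\neq a$ and $(\ra)_{aa}=-z^{-1}$. Consequently, in any product $(\ra B_1\ra B_2\cdots)_{cd}$ with $c,d\neq a$ (each $B_i$ being $A$ or a conjugated resolvent factor), expanding into a sum over internal indices and using that the outermost $\ra$'s annihilate any internal index equal to $a$, one finds the product equals the analogous product of $R$'s with every internal summation restricted to $[N]\setminus\{a\}$. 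Removing that restriction leaves a bounded number of correction terms, each carrying a factor $R_{ia}$ or $\bar R_{aj}$ with $i,j\neq a$, or a two-resolvent object such as $(\bar RR)_{ad}$ or $(RA\bar R)_{ca}$; all of these are controlled by the one- and two-resolvent laws \eqref{eqn:2-resolvents}, \eqref{eqn:2-resolvents-traceless}, using $R_{ia}=O_\prec(\Psi)$ for $i\neq a$ (since $\langle\e_i,\e_a\rangle=0$) and $\Psi(z)=\Psi(\bar z)$. Since $\rab=R^{(a)}(\bar z)$ obeys \eqref{eqn:ra-def} at $\bar z$ and $R(z),R(\bar z)$ commute as functions of $Q$, the conjugated factors need no separate treatment.

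I would then treat the four estimates in turn. For the isotropic bound in \eqref{eqn:2-resolvents-ra}, take $\y=\e_s$ with $s\neq a$: then $\ra_{\x s}=R_{\x s}-x_a R_{as}$, and since $|R_{\x s}-\langle\x,\e_s\rangle\m|\prec\Psi$ and $R_{as}=O_\prec(\Psi)$, the claim follows (the case $\x=\e_s$ is symmetric). For $(\ra\ra)_{cd},(\ra\rab)_{cd}$ with $c,d\neq a$ the reduction gives $(RR)_{cd}-R_{ca}R_{ad}$ and $(R\bar R)_{cd}-R_{ca}\bar R_{ad}$; the leading terms are $O_\prec(N\Psi^2)$ and the corrections $O_\prec(\Psi^2)$, absorbed because $\Psi\gtrsim N^{-1/2}$ by \eqref{eqn:psi-bound}. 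For \eqref{eqn:2-resolvents-traceless-ra} the reduction yields $(\ra A\rab)_{cd}=(RA\bar R)_{cd}-R_{ca}\bar R_{A^\trans\e_a,\,d}-R_{c,\,A\e_a}\bar R_{ad}+A_{aa}R_{ca}\bar R_{ad}$, where $A^\trans\e_a,A\e_a$ have norm $\le 1$; the main term is $O_\prec(N^{1/2}\Psi)$ by \eqref{eqn:2-resolvents-traceless}, and each correction is $O_\prec(\Psi)$, absorbed since $N^{1/2}\Psi\gtrsim 1$. For \eqref{eqn:3-resolvents-traceless-ra} the same reduction writes $(\ra A\rab\ra)_{cd}$ as $(RA\bar RR)_{cd}$ plus corrections where one internal index is forced to be $a$; after peeling the $a$-th row or column of $A$ off as a norm-$\le 1$ vector these collapse to quantities of the shape $R_{ca}(\bar RR)_{\ast d}$, $R_{\ast c}(\bar RR)_{ad}$, and $(RA\bar R)_{ca}R_{ad}$, bounded by $O_\prec(N\Psi^3)$, $O_\prec(N\Psi^2)$, $O_\prec(N^{1/2}\Psi^2)$ via \eqref{eqn:2-resolvents} and \eqref{eqn:2-resolvents-traceless}. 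Each is $\ll N^{3/2}\Psi^{9/4}$ on $I_\ell$ by the bounds $N^{-1/2}\lesssim\Psi\le CN^{-\tau/20}$ from Lemma~\ref{lem:prelim-bound}, while the main term is $O_\prec(N^{3/2}\Psi^{9/4})$ by \eqref{eqn:3-resolvents-traceless}.

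The content is essentially bookkeeping, so I anticipate no genuine obstacle. The only points needing care are: correctly enumerating which ``internal index $=a$'' terms survive in each product and matching each to the appropriate established local law (one-, two-, or three-resolvent; traceless or not); the rank-one and rank-two reorganizations that split the $a$-th row and column of $A$ into vectors of norm at most $1$, and checking that the resulting two-resolvent objects such as $(\bar RR)_{ad}$ are covered by \eqref{eqn:2-resolvents} for index pairs that include $a$; and verifying in each case that the correction is dominated by the target using $N^{-1/2}\lesssim\Psi\le N^{-\tau/20}$ (and $\Psi\le C/(N\eta_\ell)$ on $I_\ell$ when needed).
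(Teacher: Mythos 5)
Your proposal is correct and follows exactly the route the paper intends: the paper omits this proof entirely, stating only that it ``follows directly from the corresponding local laws in Lemma~\ref{lem:three-resolvent-local-law} and \eqref{eqn:ra-def}'', and your reduction of each product to the corresponding $R$-product with internal summations restricted away from $a$, together with the enumeration of the $O_\prec(\Psi)$, $O_\prec(N\Psi^2)$, $O_\prec(N\Psi^3)$, $O_\prec(N^{1/2}\Psi^2)$ correction terms and their absorption via $N^{-1/2}\lesssim\Psi$, is precisely the omitted bookkeeping. The one caveat is that you inherit \eqref{eqn:ra-def} verbatim from the paper, whereas the Schur-complement relation is really $R^{(a)}_{ij}=R_{ij}-R_{ia}R_{aj}/R_{aa}$ for $i,j\neq a$; since the extra term is $O_\prec(\Psi^2)$ (using $|R_{aa}|\asymp 1$ from \eqref{isotropic}), it is absorbed by the same comparisons you already carry out and none of the stated bounds change.
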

\begin{proof}

Suppose without loss of generality that $\x = \e_s$ with $s \neq a$. 
Using \eqref{eqn:ra-def}, we have 
\[
R^{(a)}_{\bm x \bm y}
= \sum_{r=1}^N \langle \e_s, R^{(a)} \e_r\rangle \langle \e_r, \y\rangle 
= 
\sum_{r=1}^N \langle \e_s, W \e_r\rangle \langle \e_r, \y\rangle 
= W_{\bm x \bm y} ,
\]
and the result follows from \eqref{eqn:2-resolvents}  applied to $W$ (after rescaling $\bm y$ appropriately, since $W_{\bm x \bm y}$ omits the $a$-th entry of $\bm y$).

Next, we have 
\begin{equation}\label{a3v1}
\begin{aligned}
    (R^{(a)}  R^{(a)})_{\x\y}=\sum_{k\neq a} R^{(a)}_{s k} R^{(a)}_{k\y} = (W W)_{s \y}
\end{aligned}
\end{equation}
Applying \eqref{eqn:2-resolvents}, this proves the second claim in \eqref{eqn:2-resolvents-ra}, and the third follows similarly.

Turning to \eqref{eqn:2-resolvents-traceless-ra}, we write 
\[
(\ra A \rab)_{c d} = \sum_{i,j} R^{(a)}_{ci} A_{ij} \bar {R}^{(a)}_{jd} = \sum_{i,j} W_{ci} A_{ij} \bar W_{jd}  = (W  A'  \bar W)_{c d},
\]
where $A'$ is the $(N-1)\times (N-1)$ matrix obtained by deleting the $a$-th row and column of $A$. Fix an index $m\neq a,b,c,d$, and let $D$ be the diagonal matrix with $D_{mm} = A_{aa}$. We have
\begin{align}
(W  A'  \bar W)_{c d}
= \big(W  (A'  + D  )  \bar W\big)_{c d} -  (W  D  \bar W)_{cd}.
\end{align}
Since $\Tr (A'  + D)=0$, the first term is bounded using \eqref{eqn:2-resolvents-traceless}. The second term equals $W_{cm} \bar W_{md}$, which is $O_{\prec}(\Psi^2)$ by \eqref{eqn:2-resolvents-ra}, since these are off-diagonal resolvent entries.

Similarly, 
\begin{align*}
(\ra A \rab \ra)_{c d} &= \sum_{i,j,k} R^{(a)}_{ci} A_{ij} \bar {R}^{(a)}_{jk}R^{(a)}_{kd}\\
&=  \sum_{i,j, k\neq a } W_{ci} A_{ij} \overline{W}_{jk}W_{kd}\\
& = (W A' \overline{W} W)_{c d} 
= (W (A' +D) \overline{W} W)_{c d}   - 
W_{cm} ( \overline{W} W)_{m d} 
.
\end{align*}
We conclude using \eqref{eqn:3-resolvents-traceless} and \eqref{eqn:2-resolvents-ra}.

\end{proof}
The main goal of this section is to rewrite the resolvent expansion terms $x_i$ and $ y_i$ from Lemma~\ref{lem:resolvent-expansion} into a certain polynomial that allows us to take advantage of the cancellation mechanism noted in  
\eqref{i:additional-gain}.
For instance, we want to rewrite
\begin{equation}\label{explainx1}
x_1 \approx\left(\sum_{i_1, \ldots, i_d\neq a} V_{i_1, \ldots, i_d} h_{i_1 a} \cdots h_{i_d a}\right) \cdot\left(\prod_{j=d+1}^{d+m} \sum_{i_j=1}^N V_{i_j}\left(h_{i_j a}^2-\frac{1}{N}\right)\right),
\end{equation}
where the $V$ terms are $Q^{(a)}$-measurable. The reason to write it into this form is that, whenever $d$ is odd, we gain a factor of $N^{-1 / 2}$ upon taking the expectation (see Lemma \ref{lem:odd-poly} for the precise statement), which is essential in the proof of Lemma \ref{lem:third-moment-terms}.

Fixing a universal constant $C_0>0$ and following the setup in \cite[Section~7]{BloKnoYauYin16}, we make the following definitions. 
\begin{definition}[Admissible weights]\label{admissibleweights}
Let $\varrho=\left(\varrho_i: i \in \llbracket 1, N \rrbracket\right)$ be a sequence of deterministic nonnegative real numbers. We say that $\varrho$ is an \emph{admissible weight} if
\begin{equation*}
\frac{1}{N^{1 / 2}}\left(\sum_{i=1}^N \varrho_i^2\right)^{1 / 2} \leqslant 1, \quad \frac{1}{N^{1 / 2}}\left(\sum_{i=1}^N \varrho_i^3\right)^{1 / 3} \leqslant N^{-1 / 6} .
\end{equation*}
\end{definition}

\begin{definition}[$O_{\prec,d}(\cdot)$]\label{def:od}
For a given degree $d \in \mathbb{N}$ let
\begin{equation*}
\mathcal{P}=\sum_{\substack{1\le i_1, \ldots, i_d \le N \\ i_1, \ldots, i_d \neq a}} V_{i_1 \cdots i_d} h_{a i_1} \cdots h_{a i_d}
\end{equation*}
be a polynomial in entries of the a-th row of $Q$. We write $\mathcal{P}=O_{\prec, d}(K)$ if the following conditions are satisfied.
\begin{enumerate}
\item $K$ is deterministic and $V_{i_1 \cdots i_d}$ is $Q^{(a)}$-measurable.
\item There exist admissible weights $\varrho^{(1)}, \ldots, \varrho^{(d)}$ such that
\begin{equation*}
\left|V_{i_1 \cdots i_d}\right| \prec K \varrho_{i_1}^{(1)} \cdots \varrho_{i_d}^{(d)} .
\end{equation*}
\item We have the deterministic bound $\left|V_{i_1 \cdots i_d}\right| \leqslant N^{C_0}$.
\end{enumerate}
The above definition also extends to $d=0$, where $\mathcal P=V$ is $Q^{(a)}$-measurable.
\end{definition}

\Cref{def:od} corresponds to the first term in our desired representation \eqref{explainx1}. 
The point of the growth condition in Definition~\ref{admissibleweights} is to ensure that we have $\mathcal P = O_{\prec}(K)$ whenever $\mathcal P = O_{\prec,d}(K)$, as noted in Remark~\ref{rmk:dominationRefinement} below. We next make a definition corresponding to the second term of \eqref{explainx1}.

\begin{definition}[$O_{\prec,\diamond}(\cdot)$]\label{def:odiamond}
Let $\mathcal{P}$ be a polynomial of the form
\begin{equation*}
\mathcal{P}=\sum_{i=1}^N V_i\left(h_{a i}^2-\frac{1}{N}\right).
\end{equation*}
We write $\mathcal{P}=O_{\prec, \diamond}(K)$ if $V_i$ is $Q^{(a)}$-measurable, $\left|V_i\right| \leqslant N^{C_0}$, and $\left|V_i\right| \prec K$ for some deterministic $K$.
\end{definition}

We finally define a class of terms that generalizes \eqref{explainx1}, and tracks whether $d$ is even or odd (since we expect additional cancellation when $d$ is odd).

\begin{definition}[Graded polynomials]\label{def:graded-poly} We write $\mathcal{P}=O_{\prec, \even}(K)$ if $\mathcal{P}$ is a sum of at most $C_0$ terms of the form
\begin{equation*}
K \mathcal{P}_0 \prod_{s=1}^n \mathcal{P}_i, \quad \mathcal{P}_0=O_{\prec, 2 d}(1), \quad \mathcal{P}_i=O_{\prec, \diamond}(1)
\end{equation*}
where $ 0 \le d, n \leqslant C_0 $ and $K$ is deterministic. Moreover, we write $\mathcal{P}=O_{\prec, \odd}(K)$ if $\mathcal{P}=\widehat{\mathcal{P}} \mathcal{P}_{\even}$, where $\widehat{\mathcal{P}}=O_{\prec, 1}(1)$ and $\mathcal{P}_\even=O_{\prec, \even}(K)$.
\end{definition}
\begin{remark}
    The graded polynomials satisfy simple algebraic rules by definition, which we state without proof:
    \begin{align*}
        \Oparity{*}{K_1}+\Oparity{*}{K_2}&=\Oparity{*}{K_1+K_2},\\
        \Oparity{*}{K_1}\Oparity{*}{K_2}&=\Oparity{\even}{K_1K_2},\\
        \Oparity{\odd}{K_1}\Oparity{\even}{K_2}&=\Oparity{\odd}{K_1K_2},
    \end{align*}
    after possibly increasing $C_0$. Here $*$ represents either $\odd$ or $\even$. It should be noted that all of these operations can be done for an arbitrary, but finite, number of times (independent of $N$).
\end{remark}
\begin{remark}\label{rmk:dominationRefinement}
    Definitions \ref{def:od}--\ref{def:graded-poly} refine the stochastic domination notation from \Cref{def:stochasticDomination}. More precisely, we have
    \begin{align}\label{eqn:refinement-implications}
        \mathcal{P}=O_{\prec, *}(K) \Longrightarrow \mathcal{P}=O_{\prec}(K),
    \end{align}
    where $*$ can represent $d$, $\diamond$, $\even$ or $\odd$. See lines under \cite[Equation~(7.56)]{BloKnoYauYin16} for details.
\end{remark}
We now state the following lemma proved in \cite[Lemma~7.13]{BloKnoYauYin16}, which formalizes that claim that we have additional cancellation for odd terms.
\begin{lemma}\label{lem:odd-poly}
Let $\mathcal{P}=O_{\prec, \odd}(K)$ for some deterministic $K \leqslant N^{C_0}$. Then for any fixed $D>0$, we have
\begin{equation*}
\big|\mathbb{E} [\mathcal{P}]\big| \prec N^{-1 / 2} K+N^{-D}.
\end{equation*}
\end{lemma}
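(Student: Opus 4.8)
The plan is to condition on $Q^{(a)}$, the matrix $Q$ with its $a$-th row and column zeroed, and to use that all coefficients appearing in an $O_{\prec,\odd}(K)$-polynomial are $Q^{(a)}$-measurable, whereas the remaining randomness $\{h_{ai}\}_{i\ne a}$ is independent of $Q^{(a)}$, jointly independent, centered, and has $\E h_{ai}^2=1/N$ and $\E|h_{ai}|^p\le\mu_pN^{-p/2}$. Write $\E[\mathcal P]=\E\big[\E[\mathcal P\mid Q^{(a)}]\big]$; the main claim will be that on the $Q^{(a)}$-measurable high-probability event $\mathcal G$ on which the stochastic-domination bounds defining $O_{\prec,\odd}(K)$ hold with a factor $N^\varepsilon$, one has $\big|\E[\mathcal P\mid Q^{(a)}]\big|\le N^{\varepsilon-1/2}K$. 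Granting this, $|\E[\mathcal P\,\mathbf 1_{\mathcal G}]|\le N^{\varepsilon-1/2}K$, while on $\mathcal G^c$ the crude deterministic bounds $|V_{\cdots}|\le N^{C_0}$ together with finiteness of the $h_{ai}$-moments give $\E[|\mathcal P| \mid Q^{(a)}]\le N^{C'}$ for a constant $C'=C'(C_0,d,n)$, so $|\E[\mathcal P\,\mathbf 1_{\mathcal G^c}]|\le N^{C'}\P(\mathcal G^c)\le N^{-D}$ once $\P(\mathcal G^c)$ is taken $\le N^{-D-C'}$; this is the source of the $N^{-D}$ term. By linearity it then suffices to treat a single summand $\mathcal P=\widehat{\mathcal P}\cdot K\,\mathcal P_0\prod_{s=1}^n\mathcal P_s$ with $\widehat{\mathcal P}=O_{\prec,1}(1)$, $\mathcal P_0=O_{\prec,2d}(1)$, $\mathcal P_s=O_{\prec,\diamond}(1)$, and $d,n\le C_0$.

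Next, expand $\mathcal P$ as a polynomial in the $h_{ai}$: $\widehat{\mathcal P}$ contributes one ``slot'' carrying $h_{ai}$, $\mathcal P_0$ contributes $2d$ slots carrying $h_{aj_1}\cdots h_{aj_{2d}}$, and each $\mathcal P_s=\sum_kV^{(s)}_k(h_{ak}^2-N^{-1})$ contributes either a ``double slot'' carrying $h_{ak_s}^2$ (two index-identified slots) or the constant $-\tfrac1N\sum_kV^{(s)}_k$. Thus every monomial of $\mathcal P$ has $h$-degree $1+2d+2|A|$, for the subset $A\subseteq[n]$ of factors from which the quadratic part is taken --- in particular this degree is always \emph{odd}. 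Taking $\E[\,\cdot\mid Q^{(a)}]$ and grouping by the partition $\pi$ of the active slots induced by coincidences of their indices, a monomial's conditional expectation factorizes as $\prod_{B\in\pi}\E h_{a\iota(B)}^{|B|}$, summed over injective assignments $\iota$ of a distinct index to each block. Since $\E h_{ai}=0$, a block consisting of a single power-one slot kills the monomial; and the $-N^{-1}$ centering is arranged exactly so that the contribution of an \emph{isolated} double slot, $\sum_kV^{(s)}_k\E h_{ak}^2=\tfrac1N\sum_kV^{(s)}_k$, cancels against the constant term of $\mathcal P_s$. Hence only partitions in which every block has size $\ge2$ and every double slot sits in a block of size $\ge3$ survive. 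The decisive parity input now enters: the number of active slots is odd, so a partition into blocks of size $\ge2$ cannot consist solely of size-two blocks --- at least one block has size $\ge3$.

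It remains to estimate the contribution of each block. A block $B$ of size $m$ carries $m$ weight functions $\varrho^{(1)},\dots,\varrho^{(m)}$ (for a power-one slot the admissible weight from the definition of $O_{\prec,d}$; for each of the two sub-slots of a double slot the constant vector $\mathbf 1$, which is itself admissible and absorbs the $\prec1$-bound on $V^{(s)}_k$), its moment factor is $|\E h_{a\nu}^m|\le\mu_mN^{-m/2}$, and its free index sum obeys, by H\"older with exponent $m$,
\begin{equation*}
\sum_\nu\varrho^{(1)}_\nu\cdots\varrho^{(m)}_\nu\le\prod_{t=1}^m\Big(\sum_\nu(\varrho^{(t)}_\nu)^m\Big)^{1/m}\le\begin{cases}N,&m=2,\\[2pt]N^{(m-1)/2},&m\ge3,\end{cases}
\end{equation*}
using $N^{-1/2}\|\varrho\|_2\le1$ when $m=2$, and when $m\ge3$ the interpolation $\|\varrho\|_m^m\le(\max_i\varrho_i)^{m-3}\sum_i\varrho_i^3\le N^{(m-3)/2}\cdot N$, valid because $\max_i\varrho_i\le N^{1/2}$ and $N^{-1/2}\|\varrho\|_3\le N^{-1/6}$. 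Consequently a block of size $2$ contributes $O_\prec(1)$ and a block of size $\ge3$ contributes $O_\prec(N^{-1/2})$. Since there are only $O_{C_0}(1)$ choices of $(A,\pi)$ and, by the parity observation, each surviving $\pi$ has at least one block of size $\ge3$, summing all contributions gives $\big|\E[\mathcal P\mid Q^{(a)}]\big|\prec N^{-1/2}K$ on $\mathcal G$, completing the argument.

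The main obstacle is the combinatorial bookkeeping of the second and third steps: one must organize $\E[\,\cdot\mid Q^{(a)}]$ into the partition expansion so that the double slots coming from the $\mathcal P_s$ are treated uniformly with the power-one slots (each genuinely forcing its block to have size $\ge3$ once merged, which is what the $h^2-N^{-1}$ centering secures by cancelling the isolated case), and then verify the per-block H\"older estimates in precisely the generality of the admissible-weight conditions. These conditions are calibrated so that a size-two block is ``free'' while a size-three block yields the gain $N^{-1/2}$, and the odd-degree hypothesis is exactly what guarantees such a block always exists.
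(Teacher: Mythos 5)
The paper does not prove this lemma; it is quoted verbatim from \cite[Lemma~7.13]{BloKnoYauYin16}, and your argument is essentially a correct reconstruction of the proof given there: condition on $Q^{(a)}$, expand into monomials grouped by the partition of slots induced by index coincidences, kill singleton blocks by centering, bound each block by its moment factor $N^{-m/2}$ times the H\"older/admissible-weight bound $N^{(m-1)/2}$ (or $N$ for $m=2$), and use the odd total degree to force at least one block of size $\geq 3$, yielding the $N^{-1/2}$ gain. The only imprecision is the claim that the isolated double slot cancels \emph{exactly} against the constant term of $\mathcal P_s$: because the surviving assignments require distinct indices across blocks, the cancellation leaves a residual $-N^{-1}\sum_{k\in S}V^{(s)}_k$ over the $O(1)$ indices $S$ already used, which is $O_\prec(N^{-1})$ and harmless; equivalently one avoids the issue by treating $h_{ak}^2-N^{-1}$ as a single centered variable throughout.
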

The following resolvent identities are standard (see \cite[Lemma~3.5]{benaych2016lectures} and \cite[Equation~(4.1)]{benaych2016lectures}):

\begin{align}
R_{a a}&=\frac{1}{-z-\sum_{r, s \notin \{a,b\}} R_{r s}^{(a)} h_{a r} h_{a s}}, \label{eqn:neu-series-1}\\
 R_{a i}&=-R_{a a} \sum_{r \notin \{a,b\}} h_{a r} R_{ri}^{(a)} , \label{eqn:neu-series-2}\\
  R_{ia}&=-R_{a a} \sum_{r \notin \{a,b\}} R_{ir}^{(a)} h_{ra}  \\
R_{i j}&=R_{i j}^{(a)}+R_{a a}\left( \sum_{r \notin \{a,b\}} R_{i r}^{(a)} h_{ra} \right) \left( \sum_{s \notin \{a,b\}} h_{a s} R_{sj}^{(a)} \right) ,\label{eqn:neu-series-3}
\end{align}
where the second and third identities hold for any index $i\neq a$, and the fourth requires $i,j \neq a$.

\begin{remark}
In stating the above identities, we used that $R$ is the resolvent of $Q$, and $Q_{ab} = Q_{ba} = 0$. Hence, the terms corresponding to $h_{ab}$ and $h_{ba}$ are omitted in the summations. We will continue accounting for these omitted terms in the computations in the remainder of this section without mentioning it explicitly. 
\end{remark}
In the next several lemmas, we write resolvents and multi-resolvents in terms of graded polynomials.
\begin{lemma}\label{lem:parity-r}
Fix $D>0$. For every spectral parameter $z\in \bm S$ and index $c \neq a$, we have
\begin{align}
    R_{a a}&=O_{\prec, \even}(1)+O_{\prec}\left(N^{-D}\right),\label{eqn:parity-aa}\\
    R_{a c }&=O_{\prec, \odd}(\Psi)+\Od,\label{eqn:parity-ab}\\
    R_{cc  }&=O_{\prec, \even}(1)+\Od,\label{eqn:parity-bb}\\
    \Tr(R)&=\sum_{i\neq a }\ra_{ii}+\Oparity{\even}{N\Psi^2}+\Od.\label{eqn:parity-trace}
\end{align}
\end{lemma}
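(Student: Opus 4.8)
The plan is to expand the relevant resolvent entries using the Schur-type identities \eqref{eqn:neu-series-1}--\eqref{eqn:neu-series-3} and then organize the resulting sums into the graded-polynomial classes of Definitions \ref{def:od}--\ref{def:graded-poly}. The key input is that the $R^{(a)}$-entries are $Q^{(a)}$-measurable and obey the local law \eqref{eqn:2-resolvents-ra}, so that a string like $R^{(a)}_{ir}$ viewed as coefficients of $h_{ar}$ furnishes an admissible weight (the $\prec \Psi$ off-diagonal bound, the $\prec 1$ diagonal bound, and the crude deterministic bound $\|R^{(a)}\|\le\eta^{-1}\le N$ all feed the three conditions in Definition \ref{def:od}); meanwhile $\sum_{r,s\ne a}R^{(a)}_{rs}h_{ar}h_{as} = \sum_{r\ne a}R^{(a)}_{rr}(h_{ar}^2-\tfrac1N) + \tfrac1N\sum_{r\ne a}R^{(a)}_{rr} + \sum_{r\ne s}R^{(a)}_{rs}h_{ar}h_{as}$ splits into an $O_{\prec,\diamond}(1)$ piece, a $Q^{(a)}$-measurable deterministic-given-$Q^{(a)}$ piece which together with $-z$ is the leading term, and an $O_{\prec,2}(\cdot)$ piece, so the whole denominator is $O_{\prec,\even}$ of a nonvanishing quantity.

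First I would treat $R_{aa}$. By \eqref{eqn:neu-series-1}, $R_{aa} = -(z + D)^{-1}$ where $D = \sum_{r,s\ne a}R^{(a)}_{rs}h_{ar}h_{as}$. Using the local law one shows $z+D$ stays bounded away from $0$ with overwhelming probability (its leading part is $\approx z + \m(z)^{-1}\cdot(\text{something close to }\m)$, nonzero on $\bm S$), so on the complementary bad event of probability $\le N^{-D}$ we absorb the contribution into the $O_\prec(N^{-D})$ term after using the deterministic bound $|R_{aa}|\le\eta^{-1}$. On the good event, expand $-(z+D)^{-1}$ as a finite Neumann/Taylor series in $D$ around its $Q^{(a)}$-measurable leading part, truncating at order $C_0$ with a remainder that is again $O_\prec(N^{-D})$; each term of the series is a product of a $Q^{(a)}$-measurable scalar with a bounded number of $O_{\prec,2}(\cdot)$ and $O_{\prec,\diamond}(1)$ factors, hence of the form $O_{\prec,\even}(1)$ by the algebraic rules in the Remark after Definition \ref{def:graded-poly}. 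This gives \eqref{eqn:parity-aa}. For \eqref{eqn:parity-ab}, apply \eqref{eqn:neu-series-2}: $R_{ab} = -R_{aa}\sum_{r\ne a}R^{(a)}_{br}h_{ar}$; the sum is exactly an $O_{\prec,1}(\Psi)$ object (coefficients $R^{(a)}_{br}$ with $b\ne a$, off-diagonal bound $\prec\Psi$, giving the admissible weight, and the $r=b$ diagonal term contributes an $O_{\prec,1}(1)$ that is harmless after rescaling), and multiplying by $R_{aa}=O_{\prec,\even}(1)$ produces $O_{\prec,\odd}(\Psi)$; the bad event is again swept into $O_\prec(N^{-D})$. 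For \eqref{eqn:parity-bb}, use \eqref{eqn:neu-series-3} with $i=j=b$: $R_{bb}=R^{(a)}_{bb} + R_{aa}\big(\sum_{r\ne a}R^{(a)}_{br}h_{ar}\big)^2$; the first term is $Q^{(a)}$-measurable (the $d=0$ case of $O_{\prec,2d}$, i.e. $O_{\prec,\even}$), and the second is $O_{\prec,\even}(1)\cdot O_{\prec,1}(\Psi)\cdot O_{\prec,1}(\Psi)=O_{\prec,\even}(\Psi^2)$, which is absorbed into $O_{\prec,\even}(1)$.

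Finally, for \eqref{eqn:parity-trace}, write $\Tr R = R_{aa} + \sum_{i\ne a}R_{ii}$ and expand each $R_{ii}$ with $i\ne a$ via \eqref{eqn:neu-series-3}: $R_{ii} = R^{(a)}_{ii} + R_{aa}\big(\sum_{r\ne a}R^{(a)}_{ir}h_{ar}\big)^2$. Summing over $i\ne a$ gives $\sum_{i\ne a}R^{(a)}_{ii}$ plus $R_{aa}\sum_{i\ne a}\big(\sum_{r\ne a}R^{(a)}_{ir}h_{ar}\big)^2$; the latter, after expanding the square and separating diagonal ($r=s$) from off-diagonal terms as above, is a graded even polynomial whose scalar prefactor is controlled by $\sum_{i,r}|R^{(a)}_{ir}|^2 = \Tr(R^{(a)}\bar R^{(a)}) \prec N^2\Psi^2$ (equivalently $\|R^{(a)}\bar R^{(a)}\|\prec N\Psi^2$ entrywise summed), yielding the $O_{\prec,\even}(N\Psi^2)$ term; the single term $R_{aa}=O_{\prec,\even}(1)$ is likewise absorbed into $O_{\prec,\even}(N\Psi^2)$ since $N\Psi^2\gtrsim 1$ on $\bm S$ by \eqref{eqn:psi-bound}. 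The bad events in each case contribute $O_\prec(N^{-D})$.

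The main obstacle I anticipate is the bookkeeping in \eqref{eqn:parity-trace}: one must verify that after expanding the double sum $\sum_{i\ne a}(\sum_{r}R^{(a)}_{ir}h_{ar})^2$ and extracting the $O_{\prec,\diamond}$ parts (the terms $h_{ar}^2-\tfrac1N$), the residual coefficient sequences genuinely satisfy the admissible-weight normalization in Definition \ref{def:od}, i.e. that contracting one index of $R^{(a)}\bar R^{(a)}$ against the summation index $i$ still leaves an $\ell^2$-normalized weight in the free index $r$; this uses the local law bound on $(R^{(a)}\bar R^{(a)})_{cd}\prec N\Psi^2$ together with $\Psi\le C N^{-\tau/20}$ to check the $\ell^2$ and $\ell^3$ conditions, and is the only place where the precise two-resolvent estimate \eqref{eqn:2-resolvents-ra} (rather than a crude bound) is essential.
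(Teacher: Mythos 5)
Your route is the same as the paper's: expand via the Schur-complement identities \eqref{eqn:neu-series-1}--\eqref{eqn:neu-series-3}, split the quadratic form $\sum_{r,s\neq a}R^{(a)}_{rs}h_{ar}h_{as}$ into an $O_{\prec,\diamond}(1)$ diagonal piece, a $Q^{(a)}$-measurable scalar close to $\m$, and an $O_{\prec,2}(\Psi)$ off-diagonal piece, then run a finite Neumann expansion for $R_{aa}$ (the paper expands around the deterministic $\m$ using $-z-\m=1/\m$; you expand around the $Q^{(a)}$-measurable leading part, which also works after controlling the bad event) and classify termwise using the algebra of graded polynomials. The trace identity is handled exactly as you describe, by summing \eqref{eqn:neu-series-3} over $i\neq a$; note only that the contracted coefficients there are $((R^{(a)})^2)_{rs}$ rather than $(R^{(a)}\bar R^{(a)})_{rs}$, though both are $O_\prec(N\Psi^2)$ by \eqref{eqn:2-resolvents-ra}, so your conclusion stands.

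The one step that is wrong as written is your treatment of the $r=b$ term in $R_{ab}=-R_{aa}\sum_{r\neq a}R^{(a)}_{rb}h_{ar}$. You assert that the diagonal contribution $R^{(a)}_{bb}h_{ab}$ is ``an $O_{\prec,1}(1)$ that is harmless after rescaling.'' It is not: to place a single coefficient of size $O_\prec(1)$ into the class $O_{\prec,1}(\Psi)$ you would need an admissible weight with $\varrho_b\gtrsim\Psi^{-1}$, and since $\Psi$ can be as small as order $N^{-1/2}$ on $\bm S$ by \eqref{eqn:psi-bound}, this violates the $\ell^3$ admissibility condition $N^{-1/2}\big(\sum_i\varrho_i^3\big)^{1/3}\leq N^{-1/6}$, which caps a lone nonzero weight at $N^{1/3}$. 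Keeping that term would only give $R_{ab}=O_{\prec,\mathsf{odd}}(1)$, a loss of a full factor $\Psi^{-1}$ against \eqref{eqn:parity-ab}, and this loss would propagate through Lemmas \ref{lem:parity-xy} and \ref{lem:third-moment-terms}, where the accumulated powers of $\Psi$ are precisely what yield the final $N^{-1/2-c}$ bound. The correct resolution is that the term is simply absent: $R$ is the resolvent of $Q$, whose $(a,b)$ entry has been set to zero, so the Schur expansion of $R_{ab}$ involves the entries $q_{ar}$ of the $a$-th row of $Q$, which vanish at $r=b$; every surviving coefficient $R^{(a)}_{rb}$ is then off-diagonal and $O_\prec(\Psi)$, giving a genuine admissible weight with $K=\Psi$. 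The same remark disposes of the $r=b$ terms in your expansions of $R_{bb}$ and of the trace.
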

\begin{proof}
We begin with some preliminary claims. 
Using \eqref{eqn:2-resolvents-ra} and the definition of graded polynomial, we have
\begin{equation}\label{eqn:quad-parity}
\begin{aligned}
\sum_{r, s \notin \{a,b\}} R_{r s}^{(a)} h_{a r} h_{a s}-\m & =\sum_{r \notin \{a,b\}} R_{r r}^{(a)}\left(h_{a r}^2-\frac{1}{N}\right)+\left(\frac{1}{N} \sum_{r \notin \{a,b\}} R_{r r}^{(a)}-\m\right)+\sum_{\substack{r \neq s \\
r, s \notin \{a,b\}}} R_{r s}^{(a)} h_{a r} h_{a s} \\
& = O_{\prec, \diamond}(1)\eeb +O_{\prec, 0}(\Psi)+O_{\prec, 2}(\Psi) \\
& =O_{\prec, \even}(1).
\end{aligned}
\end{equation}
We also have 
\begin{equation}\label{eqn:quad}
\begin{aligned}
\sum_{r, s \notin \{a,b\}} R_{r s}^{(a)} h_{a r} h_{a s}-\m & =\sum_{r \notin \{a,b\}} R_{r r}^{(a)}\left(h_{a r}^2-\frac{1}{N}\right)+\left(\frac{1}{N} \sum_{r \notin \{a,b\}} R_{r r}^{(a)}-\m\right)+\sum_{\substack{r \neq s \\
r, s \notin \{a,b\}}} R_{r s}^{(a)} h_{a r} h_{a s} \\
& =O_{\prec}\left(N^{-1 / 2}\right)+O_{\prec}(\Psi)+O_{\prec}(\Psi) \\
& =O_{\prec}(\Psi) ,
\end{aligned}
\end{equation}
where we used the definition of graded polynomial, \eqref{eqn:refinement-implications} in the second step and \eqref{eqn:psi-bound} in the last step. For the second step, we also used a standard concentration bound on the first sum (see, e.g., \cite[Theorem B.1(i)]{erdHos2013delocalization}).

Note also that for all $i\neq a$,
\begin{equation}\label{eqn:linear-parity}
 \sum_{r \notin \{a,b\}} R_{i r}^{(a)} h_{r a}=
 O_{\prec, \odd}(\Psi) ,
\end{equation}
by \eqref{eqn:2-resolvents-ra} and \Cref{def:od}.

For all $n \in \mathbb{N}$, we have by \eqref{definingequation} and \eqref{eqn:neu-series-1} that
\begin{equation}\label{eqn:expansion-aa}
\begin{aligned}
R_{a a}  =\frac{1}{-z-\sum_{r, s \notin \{a,b\}} R_{r s}^{(a)} h_{a r} h_{a s}}
& =\frac{1}{-z-\m+\left(\m-\sum_{r, s \notin \{a,b\}} R_{r s}^{(a)} h_{a r} h_{a s}\right)} \\
& =\frac{1}{1 / \m+\left(\m-\sum_{r, s \notin \{a,b\}} R_{r s}^{(a)} h_{a r} h_{a s}\right)} \\
& =\frac{\m}{1+\m\left(\m-\sum_{r, s \notin \{a,b\}} R_{r s}^{(a)} h_{a r} h_{a s}\right)} \\
& =\sum_{j=0}^n \m^{j+1}\left(\sum_{r, s \notin \{a,b\}} R_{r s}^{(a)} h_{a r} h_{a s}-\m\right)^j+O_{\prec}\left(\m^{{n+1}}\Psi^n\right) \\
& =O_{\prec, \even}(1)+O_{\prec}\left(\m^{{n+1}} \Psi^n\right),
\end{aligned}
\end{equation}
where we used \eqref{eqn:quad} in the second-to-last line and \eqref{eqn:quad-parity} in the last step.

By \eqref{eqn:neu-series-2},  
\eqref{eqn:linear-parity},    
and \eqref{eqn:expansion-aa}, we have
\begin{align}\label{eqn:expansion-ab}
\begin{split}
    R_{ac}={-}R_{aa}\sum_{r\notin \{a,b\}}\ra_{rc}h_{ar}=\Oparity{\odd}{\Psi}+O_\prec(\m^{{n+1}}\Psi^{n+1}).
\end{split}
\end{align}
By \eqref{eqn:neu-series-3}, \eqref{eqn:linear-parity},     and \eqref{eqn:expansion-aa}, we have
\begin{align}\label{eqn:expansion-bb}
\begin{split}
    R_{cc}=R_{cc}^{(a)}+R_{aa}\left(\sum_{r\notin \{a,b\}}\ra_{cr}h_{ar}\right)^2=\Oparity{\even}{1}+O_\prec(\m^{{n+1}}\Psi^{n+2}).
\end{split}
\end{align}
Summing over all $c  \neq a$, we get 
\begin{align}\label{e:tracemarch}
\sum_{c \neq a}
 R_{cc}
 = 
 \sum_{c \neq a}
 R^{(a)}_{cc} + R_{aa} \sum_{r,s \notin \{a,b\} } h_{ar} h_{as} (R^{(a)} R^{(a)} )_{rs}.
\end{align}

The lemma follows from \eqref{eqn:expansion-aa}, \eqref{eqn:expansion-ab},  \eqref{eqn:expansion-bb}, and \eqref{e:tracemarch} by choosing a sufficiently large $n\equiv n(\tau,D)$.
\end{proof}

\begin{lemma}\label{lem:parity-rr}
Fix $D>0$. For every spectral parameter $z\in\bm S$ and index $c\neq a$, we have
\begin{align}
    (R\bar R)_{aa}&=\Oparity{\even}{N\Psi^2}+O_\prec(N^{-D}),\label{eqn:parity-rr-aa}\\
    (R\bar R)_{ac}&=\Oparity{\odd}{N\Psi^2}+O_\prec(N^{-D}),\label{eqn:parity-rr-ab}\\
    (R\bar R)_{cc}&=\Oparity{\even}{N\Psi^2}+O_\prec\left(N^{-D}\right),\label{eqn:parity-rr-bb}\\
    \left(R^2\right)_{aa} &=\Oparity{\even}{N\Psi^2}+O_{\prec}(N^{-D}),\label{eqn:parity-r1r-aa}\\
    \left(R^2\right)_{ac} &=\Oparity{\odd}{N\Psi^2}+O_{\prec}(N^{-D}),\label{eqn:parity-r1r-ab}\\
    \left(R^2\right)_{cc} &=\Oparity{\even}{N\Psi^2}+O_{\prec}(N^{-D}).\label{eqn:parity-r1r-bb}
\end{align}
\end{lemma}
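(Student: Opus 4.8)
The plan is to follow the blueprint of the proof of Lemma~\ref{lem:parity-r}: expand each two-resolvent entry, via the Schur/Neumann identities \eqref{eqn:neu-series-1}--\eqref{eqn:neu-series-3}, into quantities that are $Q^{(a)}$-measurable (hence amenable to the graded-polynomial bookkeeping of Definitions~\ref{def:od}--\ref{def:graded-poly}), together with the factors $R_{aa},R_{ab},R_{bb}$ which are already classified in Lemma~\ref{lem:parity-r}; the parity then comes from counting the number of degree-one factors $\sum_{r\neq a}(\cdots)h_{ar}$ that appear in each term. A preliminary remark is that $\bm S$ is invariant under conjugation, so $\bar R=(Q-\bar z)^{-1}$ and $\bar R^{(a)}$ satisfy all bounds of Lemma~\ref{lem:three-resolvent-local-law-ra} and $\bar R_{aa},\bar R_{ab},\bar R_{bb}$ the parity statements of Lemma~\ref{lem:parity-r}; in the real symmetric case $\bar R=\overline R$, so all matrices are symmetric, which is what permits writing $\bar R_{ka}=\bar R_{ak}$.

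For the diagonal entry I would first record the exact identity obtained from \eqref{eqn:neu-series-2} (applied to $R$ and to $\bar R$) and the vanishing of the off-diagonal $a$-entries of $\ra$:
\[
(R\bar R)_{aa}=R_{aa}\bar R_{aa}\Big(1+\sum_{r,s\neq a}(\ra\rab)_{rs}\,h_{ar}h_{as}\Big).
\]
Then I would split the quadratic form into its diagonal part --- contributing a centered piece $\sum_{r\neq a}(\ra\rab)_{rr}(h_{ar}^2-\tfrac1N)=O_{\prec,\diamond}(N\Psi^2)$ plus a $Q^{(a)}$-measurable remainder $\tfrac1N\Tr(\ra\rab)=\Onum{0}{N\Psi^2}$ --- and its off-diagonal part $\Onum{2}{N\Psi^2}$ (the constant weight $\varrho\equiv1$ is admissible), all using the two-resolvent bound $(\ra\rab)_{rs}\prec N\Psi^2$ from \eqref{eqn:2-resolvents-ra}; the bracket is thus $\Oparity{even}{N\Psi^2}$ after adding the trivial $1$. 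The essential point, and the reason one cannot simply quote $(R\bar R)_{aa}=\eta^{-1}\Im R_{aa}$ and cite Lemma~\ref{lem:parity-r}, is that the latter only yields $\Oparity{even}{\eta^{-1}}$, which is strictly weaker than $\Oparity{even}{N\Psi^2}$ in regimes where $\Im\m$ is small: one genuinely needs the \emph{two}-resolvent estimate \eqref{eqn:2-resolvents-ra}, not the square of the one-resolvent estimate. Multiplying by $R_{aa}\bar R_{aa}=\Oparity{even}{1}+\Od$ from \eqref{eqn:parity-aa}, and using $\Oparity{even}{1}\subseteq\Oparity{even}{N\Psi^2}$ --- valid because $N\Psi^2\gtrsim1$ by \eqref{eqn:psi-bound} --- gives \eqref{eqn:parity-rr-aa}.

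For $(R\bar R)_{ab}$ and $(R\bar R)_{bb}$ I would expand $\bar R_{kb}$ (respectively $R_{bk}$ and $\bar R_{kb}$) for $k\neq a$ via \eqref{eqn:neu-series-3}, handle the $k=a$ term separately via \eqref{eqn:neu-series-2}, and sum over $k$ using $\sum_k\ra_{\,\cdot\,k}\rab_{k\,\cdot\,}=(\ra\rab)_{\,\cdot\,\cdot\,}$ together with \eqref{eqn:2-resolvents-ra}. In the $ab$ case each resulting term carries exactly one unpaired degree-one factor $\sum_r(\cdots)h_{ar}$ (and in the term $R_{aa}\bar R_{ab}$, the odd factor supplied by \eqref{eqn:parity-ab}), so each is $\Oparity{odd}{N\Psi^2}$ modulo $\Od$, using $\Psi\le N\Psi^2$, i.e. $\Psi\ge(N\eta)^{-1}$, from \eqref{controlparameter}, and $\Onum{1}{K}\subseteq\Oparity{odd}{K}$; this yields \eqref{eqn:parity-rr-ab}. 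In the $bb$ case the $k=a$ term is $R_{ab}\bar R_{ab}=\Oparity{even}{\Psi^2}+\Od$ by \eqref{eqn:parity-ab}, and every other term carries an even number of degree-one factors (the double sums treated as in the $aa$ case), hence is $\Oparity{even}{N\Psi^2}$ modulo $\Od$; this yields \eqref{eqn:parity-rr-bb}. One subtlety I would flag here: the degree-one factors arising are of the form $\sum_{s\neq a}\ra_{bs}h_{as}$, whose diagonal coefficient $\ra_{bb}\approx\m$ is of order $1$ rather than $\Psi$, so these are only $\Onum{1}{1}$ (not $\Onum{1}{\Psi}$); this is harmless because the target bound $N\Psi^2$ already swallows the $\m$-sized pieces.

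The main obstacles are the parity bookkeeping --- in particular remembering that $R_{aa},R_{ab},R_{bb}$ are \emph{not} $Q^{(a)}$-measurable and must be rewritten through Lemma~\ref{lem:parity-r} before the sum/product rules for graded polynomials (the remark after Definition~\ref{def:graded-poly}) apply --- and the systematic use of the elementary comparisons $1\lesssim N\Psi^2$ and $(N\eta)^{-1}\le\Psi$ to inflate the various ``$K$''-parameters up to the common value $N\Psi^2$, while checking that multiplication by a bounded deterministic factor preserves admissibility of weights (enlarging $C_0$ if necessary); the $\Od$ errors propagate harmlessly after relabeling $D$. Finally, the $R^2$ statements \eqref{eqn:parity-r1r-aa}--\eqref{eqn:parity-r1r-bb} follow from the identical argument with every $\bar R$ and $\rab$ replaced by $R$ and $\ra$, now using the bound $(\ra\ra)_{cd}\prec N\Psi^2$ from \eqref{eqn:2-resolvents-ra}; I would simply remark on this rather than repeat the computation.
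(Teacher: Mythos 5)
Your proposal is correct and follows essentially the same route as the paper: the paper proves only \eqref{eqn:parity-rr-aa}, via the identity $(R\bar R)_{aa}=|R_{aa}|^2\bigl(\sum_{r,s\neq a}(\ra\rab)_{rs}h_{ar}h_{as}+1\bigr)$ combined with \eqref{eqn:2-resolvents-ra}, \eqref{eqn:parity-aa}, and \eqref{eqn:psi-bound}, exactly as you do, and dismisses the remaining entries as similar. Your extra details for the $ab$, $bb$, and $R^2$ cases (including the observation that the diagonal coefficients of the degree-one factors are only $O(1)$, which is absorbed by $N\Psi^2$) are consistent with what the paper leaves implicit.
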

\begin{proof}We present only the proofs for  \eqref{eqn:parity-rr-aa} and \eqref{eqn:parity-rr-ab}; the others can be shown similarly. By the resolvent identity \eqref{eqn:neu-series-2},
\begin{align}\label{eqn:expansion-rr-aa}
\begin{split}
    (R\bar R)_{aa}&=\sum_{i\notin \{a,b\}}\left(R_{ai}\bar R_{ia}\right)+|R_{aa}|^2\\
    =&|R_{aa}|^2\sum_{i\notin \{a,b\}}\left(\sum_{r\notin \{a,b\}}\ra_{ri}h_{ar}\right)\left(\sum_{s\notin \{a,b\}}\rab_{is}h_{as}\right)+|R_{aa}|^2\\
    =&|R_{aa}|^2\left(\sum_{r,s\notin \{a,b\}}\rr_{rs}h_{ar}h_{as}+1\right).
\end{split}
\end{align}
Combining \eqref{eqn:psi-bound}, \eqref{eqn:2-resolvents-ra}, \eqref{eqn:parity-aa}, \eqref{eqn:expansion-aa}, and \eqref{eqn:expansion-rr-aa}, we deduce \eqref{eqn:parity-rr-aa}.

Next, we consider \eqref{eqn:parity-rr-ab}. We have 
\begin{equation}
(R \bar R)_{ac} = 
    \sum_{i\notin \{a,b\}}\left(R_{ai}\bar R_{ic}\right)+R_{aa} \bar R_{{ac}},
\end{equation}
and $R_{{aa}} \bar R_{{ac}} =O_{\prec, \odd}(\Psi) + O_{\prec}(N^{-D})$ by \Cref{lem:parity-r}. Using \eqref{eqn:neu-series-3} and the resolvent identities used previously, we have 
\begin{align*}
\begin{split}
\sum_{i\notin \{a,b\}}\left(R_{ai}\bar R_{ic}\right)
&= 
- R_{aa} \sum_{i\notin \{a,b\}}
\left( \sum_{r\notin \{a,b\}} h_{ar} R_{ri}^{(a)} \right) 
\left( 
\bar R^{(a)}_{ic}  + \bar R_{aa} 
\left(
\sum_{s \notin \{a,b\}} \bar R_{{sc}}^{(a)} h_{sa}
\right)
\left(
\sum_{t \notin \{a,b\}} \bar R_{it}^{(a)} h_{{at}}
\right)
\right)
\\
&=- R_{aa} \sum_{r\notin \{a,b\}} h_{ar} \big (R^{(a)} \bar R^{(a)} \big)_{rc}
-{\vert} R_{aa}{\vert^2} \sum_{s,t \notin \{a,b\}} h_{{ar}}h_{sa}h_{{at}}
\big (R^{(a)} \bar R^{(a)} \big)_{rt}\bar R_{sc}^{(a)}.
\end{split}
\end{align*}
Applying \eqref{eqn:2-resolvents-ra} and \eqref{eqn:parity-r1r-aa} completes the proof of \eqref{eqn:parity-rr-ab}.
\end{proof}

\begin{lemma}\label{l:parity-ar}
Fix $D>0$. For all spectral parameters $z=E+\iu\eta$ satisfying $E\in I_\ell$ and $\eta=\eta_\ell$, and indices $c \neq a$,
\begin{align}
(AR)_{a a}  &=\Oparity{\even}{1 }+\Oparity{\odd}{1 }+O_{\prec}\left(N^{-D}\right),\label{e:ar-parity}\\
(AR)_{a c}  &=\Oparity{\odd}{\Psi }+
\Oparity{\even}{\Psi} + \Oparity{\even}{|A_{ac}|} + O_{\prec}\left(N^{-D}\right).\label{e:ar-odd}
\end{align}
\end{lemma}
\begin{proof}
We begin by noting that the first inequality in \eqref{eqn:2-resolvents-ra} implies that
\begin{align}\label{e:ar-march}
\big| (A\bar R^{(a)})_{as} \big|
= 
\big| \langle \bm e_a, A\bar R^{(a)} \bm e_s \rangle \big|
=\big| \langle  A \bm e_a, \bar R^{(a)} \bm e_s \rangle \big|\prec 1,
\end{align}
since $A$ is deterministic and symmetric and $s\neq a$.

To prove \eqref{e:ar-parity}, we write
\begin{align}
(AR)_{a a}  = A_{aa} R_{aa}  + \sum_{i\neq a} A_{ai} R_{ia}.
\end{align}
The first term is $\Oparity{\even}{1 }$, by \Cref{lem:parity-r}. We expand the second term as 
\begin{align}\label{e:useabove}
\sum_{i\neq a } A_{ai} R_{ia}
= \sum_{i\neq a } A_{ai} \left( -R_{a a} \sum_{r \notin \{a,b\}} h_{ra} R_{ir}^{(a)} \right)
= - R_{a a} \sum_{r \notin \{a,b\}} h_{ra} (A R^{(a)})_{ar} = \Oparity{\odd}{1}
\end{align}
where the last bound uses \eqref{e:ar-march}. This shows \eqref{e:ar-parity}.

Next, we have 
\begin{align}
(AR)_{a c}  = A_{aa} R_{ac}  + \sum_{i\neq a } A_{ai} R_{ic}.
\end{align}
The first term is $\Oparity{\odd}{{\Psi} }$, by \Cref{lem:parity-r}. The sum is 
\begin{align}
\begin{split}
\sum_{i\neq a } A_{ai} R_{ic} &= 
\sum_{i\neq a }  A_{ai} \left(
R_{i c}^{(a)}+R_{a a}\left( \sum_{r \notin \{a,b\}} R_{i r}^{(a)} h_{ra} \right) \left( \sum_{s \notin \{a,b\}} h_{a s} R_{sc}^{(a)} \right)\right)\\
&= (A R^{(a)})_{ac} + \sum_{r,s \notin \{a,b \}}
h_{ra} h_{as}
(AR^{(a)})_{ar} R_{sc}^{(a)} \\
&= \Oparity{\even}{\Psi} + \Oparity{\even}{|A_{ac}|} + O_{\prec}\left(N^{-D}\right).
\end{split}
\end{align}
In the last line, we used \eqref{eqn:2-resolvents-ra} to estimate the first term and $(AR^{(a)})_{ar}$ in the sum.

\end{proof}
\begin{lemma}\label{lem:parity-rar-rarr}
Fix $D>0$. For all spectral parameters $z=E+\iu\eta$ satisfying $E\in I_\ell$ and $\eta=\eta_\ell$, and indices $c\neq a$, 
we have
\begin{align}
(R A \bar{R})_{a a}  &=\Oparity{\even}{N^{1/2}\Psi}+\Oparity{\odd}{1}+O_\prec(N^{-D}),\label{eqn:parity-rar-aa} \\
(R A \bar{R})_{a c}  &=\Oparity{\odd}{N^{1/2}\Psi}+\Oparity{\even}{\Psi} + 
 \Oparity{\even}{|A_{ac}|}+O_{\prec}(N^{-D}),\label{eqn:parity-rar-ab} \\
(R A \bar{R})_{cc}  &=\Oparity{\even}{N^{1/2}\Psi}+ \Oparity{\odd}{\Psi} +O_\prec(N^{-D}),\label{eqn:parity-rar-bb} \\
\Tr\left(RA\bar R\right)&=\Tr\rar+\Oparity{\even}{N^{3/2}\Psi^{9/4}}+\Oparity{\odd}{N\Psi^2}+O_\prec(N^{-D}),\label{eqn:parity-rar-trace}.
\end{align}
\end{lemma}
\begin{proof}

 We begin with \eqref{eqn:parity-rar-aa}. 
By the resolvent identity \eqref{eqn:neu-series-2}, we have
\begin{align}
\label{eqn:expansion-rar-aa}
(RA\bar R)_{aa}=&|R_{aa}|^2\sum_{r,s\notin \{a,b\}}\left(R^{(a)}A\bar R^{(a)}\right)_{rs}h_{ar}h_{as}\\
&+|R_{aa}|^2\left(\sum_{s\notin \{a,b\}}\left(A\rab\right)_{as}h_{as}+\sum_{r\notin \{a,b\}}\left(\ra A\right)_{ra}h_{ar}+A_{aa}\right).\label{eqn:expansion-rar-aa2}
\end{align}
By the definition of graded polynomial (see Definition \ref{def:graded-poly}), \eqref{eqn:parity-aa}, and \eqref{eqn:2-resolvents-traceless-ra}, the term in \eqref{eqn:expansion-rar-aa} is
\begin{align}\label{eqn:rar-aa-first-line}
\begin{split}
|R_{aa}|^2\sum_{r,s\notin \{a,b\}}\left(R^{(a)}A\bar R^{(a)}\right)_{rs}h_{ar}h_{as}=\Oparity{\even}{N^{1/2}\Psi}+O_{\prec}(N^{-D}).
\end{split}
\end{align}

Recall \eqref{e:ar-march}, and note that similarly, we have
$\left(\ra A\right)_{ra}\prec 1.$
Moreover, $|A_{aa}|\leq 1$ as a consequence of $\|A\|\leq 1$. Therefore, by definition of graded polynomials, \eqref{eqn:expansion-rar-aa2} is
\begin{align}\label{eqn:rar-aa-second-line}
\begin{split}
&|R_{aa}|^2\left(\sum_{s\notin \{a,b\}}\left(A\rab\right)_{as}h_{as}+\sum_{r\notin \{a,b\}}\left(\ra A\right)_{ra}h_{ar}+A_{aa}\right)\\
&=\Oparity{\odd}{1}+\Oparity{\even}{1}+O_\prec(N^{-D}).
\end{split}
\end{align}
Now \eqref{eqn:parity-rar-aa} follows from \eqref{eqn:rar-aa-first-line} and \eqref{eqn:rar-aa-second-line}.

Next, for \eqref{eqn:parity-rar-ab}, we have by similar reasoning that
\begin{align}
(R A \bar R)_{ac} &= \sum_{i,j} R_{ai} A_{ij} {\bar R}_{jc}\\
&= 
\sum_{i,j\neq a } R_{ai} A_{ij}{\bar R}_{jc} + 
(RA)_{aa} {\bar R}_{ac} + R_{aa} (A {\bar R})_{ac} 
- R_{aa} A_{aa} {\bar R}_{ac}\\
&=
\sum_{i,j\neq a } R_{ai} A_{ij} {\bar R}_{jc} 
+ \Oparity{\odd}{\Psi} + 
 \Oparity{{even}}{|A_{ac}|}
 + \Oparity{\even}{\Psi}
+O_\prec(N^{-D})
\end{align}
Further, by \eqref{eqn:neu-series-3} and the resolvent identities used previously,
\begin{align}
\sum_{i,j\neq a } R_{ai} A_{ij} {\bar R}_{jc}
= - {R}_{aa} \sum_{r\notin \{a,b\}} h_{ar} ({{R}^{(a)}} A {\bar R}^{(a)})_{rc} - |{R}_{aa}|^2 \sum_{r,s,t\notin \{a,b\}} h_{ar} h_{as} h_{at} {\bar R}^{(a)}_{sc} ({{R}}^{(a)}  A {\bar R}^{(a)})_{rt},
\end{align}
and the claim follows from \eqref{eqn:2-resolvents-traceless-ra}.

For \eqref{eqn:parity-rar-bb}, we note that 
\begin{align}
(R A \bar R)_{cc} &= \sum_{i,j} R_{ci} A_{ij} {\bar R}_{jc}\\
&= 
\sum_{i,j\neq a } R_{ci} A_{ij} {\bar R}_{jc} + 
(RA)_{ca} {\bar R}_{ac} + R_{ca} (A {\bar R})_{ac} 
- R_{ca} A_{aa} {\bar R}_{ac}\\
&=
\sum_{i,j\neq a } R_{ai} A_{ij} {\bar R}_{jc} 
+ \Oparity{\odd}{\Psi} 
 + \Oparity{\even}{\Psi}
+O_\prec(N^{-D}),
\end{align}
as the leading-order term can be bounded as before.

Turning to \eqref{eqn:parity-rar-trace}, we note that 
\begin{align*}
\Tr\left(RA\bar R\right)
& = \sum_{i \neq a} \sum_{j,k} R_{ij} A_{jk} \bar R_{ki} + (RA \bar R)_{aa}\\
&=  \sum_{i, j,k} R^{(a)}_{ij} A_{jk} \bar R^{(a)}_{ki}-
R^{(a)}_{aa} A_{aa} \bar R^{(a)}_{aa}
+ (RA \bar R)_{aa}\\
&= \Tr\rar +
\Oparity{\even}{1}
+ 
\Oparity{\even}{N^{1/2}\Psi}+\Oparity{\odd}{1}+O_\prec(N^{-D}),
\end{align*}
where we used \eqref{eqn:parity-rar-aa} in the last line. Then \eqref{eqn:parity-rar-trace} follows after noting the errors above are bounded by the claimed error terms.
\end{proof}
\begin{lemma}\label{l:march-fix}
Fix $D>0$. For all spectral parameters $z=E+\iu\eta$ satisfying $E\in I_\ell$ and $\eta=\eta_\ell$, and indices $c \neq a$,
\begin{align}
(ARR)_{a a}  &=\Oparity{\even}{N\Psi^2 }+\Oparity{\odd}{N\Psi^2 }+O_{\prec}\left(N^{-D}\right),\label{e:arr-parity}
\\
(ARR)_{a c}  &=\Oparity{\even}{N\Psi^2 }+\Oparity{\odd}{N\Psi^2 }+O_{\prec}\left(N^{-D}\right).\label{e:arr-parity-2}
\end{align}
\end{lemma}
\begin{proof}
For the first estimate, we have
\begin{align}
(ARR)_{a a} = A_{aa} (RR)_{aa}  + \sum_{i\neq a} A_{ai} R_{ia} R_{aa} +
\sum_{i,j\neq a} A_{ai} R_{ij} R_{ja}.
\end{align}
We have 
\begin{align}
A_{aa} (RR)_{aa} = \Oparity{\even}{N\Psi^2}+O_{\prec}(N^{-D}),
\end{align}
by \Cref{lem:parity-rr}. We also have 
\begin{align}
R_{aa} \sum_{i \neq a} A_{ia} R_{ia}  = \Oparity{\odd}{1}
\end{align} 
by \eqref{e:useabove}. We expand 
\begin{align}
\begin{split}\label{e:arr-int-step}
\sum_{i,j\neq a} A_{ai} R_{ij} R_{ja}
& = 
\sum_{i,j\neq a} A_{ai} \left( R_{i j}^{(a)}+R_{a a} \sum_{r,s \notin \{a,b\}} R_{i r}^{(a)} R_{sj}^{(a)} h_{ra} h_{a s}  \right) \left(-R_{a a} \sum_{t \notin \{a,b\}} R_{jt}^{(a)} h_{ta}  \right)\\
&= - R_{aa} \sum_{t \neq a } h_{ta} (A R^{(a)}
R^{(a)})_{at}  - R_{aa}^{{2}} \sum_{r,s,t \neq a }
h_{ra} h_{sa} h_{ta} (A R^{(a)})_{ar} (R^{(a)}
R^{(a)})_{st}.
\end{split}
\end{align}
We observe that 
\begin{align}\label{e:bound_arr}
(AR^{(a)} R^{(a)})_{rs} \prec N\Psi^2 
\end{align}
for any $r,s$ with $s\neq a$ (and analogous claims with one or both of the resolvents conjugated). To justify it, note that 
\[
(AR^{(a)} R^{(a)})_{rs}  = 
\langle \e_r, AR^{(a)} R^{(a)} \e_s \rangle  = 
\langle A\e_r, R^{(a)} R^{(a)} \e_s \rangle,
\]
then recall from \eqref{eqn:2-resolvents-ra} that \[(R^{(a)} R^{(a)})_{\x s} \prec N\Psi^2\] for any $\x$ such that $\| \x \| \le 1$. Using \eqref{e:bound_arr} in \eqref{e:arr-int-step}, we obtain
\begin{align}
\sum_{i,j\neq a} A_{ai} R_{ij} R_{ja}
= \Oparity{\odd}{N\Psi^2 }.
\end{align}
This completes the proof of \eqref{e:arr-parity}.

For \eqref{e:arr-parity-2}, we write 
\begin{align}
(ARR)_{a c} = A_{aa} (RR)_{ac}  + \sum_{i\neq a} A_{ai} R_{ia} R_{ac} +
\sum_{i,j\neq a} A_{ai} R_{ij} R_{jc}.
\end{align}
We have 
\begin{align}
A_{aa} (RR)_{ac} = \Oparity{\odd}{N\Psi^2}+O_{\prec}(N^{-D}),
\end{align}
by \Cref{lem:parity-rr}. Further,
\begin{align}
\sum_{i\neq a} A_{ai} R_{ia} R_{ac}
= R_{ac} \sum_{i\neq a} A_{ai} R_{ia} = {\Oparity{\even}{\Psi}},
\end{align}
by \eqref{e:useabove} {and \eqref{eqn:parity-ab}}. Finally, we expand
\begin{align*}
&\sum_{i,j\neq a} A_{ai} R_{ij} R_{jc}\\
&=
\sum_{i,j\neq a} 
A_{ai} \left( R_{i j}^{(a)}+R_{a a}\left( \sum_{r \notin \{a,b\}} R_{i r}^{(a)} h_{ra} \right) \left( \sum_{s \notin \{a,b\}} h_{a s} R_{sj}^{(a)} \right) \right) \\
&\qquad \times 
\left( R_{jc}^{(a)}+R_{a a}\left( \sum_{t \notin \{a,b\}} R_{j t}^{(a)} h_{ta} \right) \left( \sum_{u  \notin \{a,b\}} h_{a u} R_{u c}^{(a)} \right) \right)\\
&=
(A R^{(a)}R^{(a)})_{ac} + R_{aa} \sum_{r,s \notin \{a,b\}} h_{ra} h_{sa} (A R^{(a)})_{ar} (R^{(a)}R^{(a)})_{sc}
+ R_{aa} \sum_{t,u \notin \{a,b\}} h_{ta} h_{ua}
 (A R^{(a)} R^{(a)})_{at} R^{(a)}_{uc}
\\ &\quad + R^2_{aa} \sum_{r,s,t,u \notin \{a,b\}} h_{ta} h_{ua}h_{ra} h_{sa} (A R^{(a)})_{ar} (R^{(a)}R^{(a)})_{st} R^{(a)}_{uc}.
\end{align*}
Bounding these terms as before completes the proof. 
\end{proof}

\begin{lemma}\label{lem:parity-rar-rarr-2}
Fix $D>0$. For all spectral parameters $z=E+\iu\eta$ satisfying $E\in I_\ell$ and $\eta=\eta_\ell$, and indices $c\neq a$,
we have
\begin{align}
(R A \bar{R} R)_{a a}  &=\Oparity{\even}{N^{3/2}\Psi^{9/4}}+\Oparity{\odd}{N\Psi^2}+O_{\prec}\left(N^{-D}\right),\label{eqn:parity-rarr-aa}\\
(R A \bar{R} R)_{a c}  &=\Oparity{\odd}{N^{3/2}\Psi^{9/4}}+\Oparity{\even}{N\Psi^2}+O_{\prec}\left(N^{-D}\right),\label{eqn:parity-rarr-ab}\\
(R A \bar{R} R)_{cc}  &=\Oparity{\even}{N^{3/2}\Psi^{9/4}}  +\Oparity{\odd}{N\Psi^3} + \Oparity{\odd}{N\Psi^2 |A_{ac}|} +O_\prec\left(N^{-D}\right),\label{eqn:parity-rarr-bb}\\
(RA\bar RR)_{ca}&=\Oparity{\odd}{N^{3/2}\Psi^{9/4}}+\Oparity{\even}{N\Psi^3} + \Oparity{\even}{N\Psi^2 |A_{ac}|} +O_\prec(N^{-D}).\label{eqn:parity-rarr-ba}
\end{align}
\end{lemma}
\begin{proof}
For \eqref{eqn:parity-rarr-aa}, we have 
\begin{align}
\begin{split}\label{e:rarr-prelim}
(R A \bar{R} R)_{a a}  &=
\sum_{i,j,k \neq a } R_{ai} A_{ij} \bar R_{jk} R_{ka}
+  \sum_{j,k  } R_{aa} A_{aj} \bar R_{jk} R_{ka}
\\
&\quad + \sum_{i \neq a } \sum_j R_{ai} A_{ij} \bar R_{ja} R_{aa} +  \sum_{i, k \neq a } R_{ai} A_{ia} \bar R_{ak} R_{ka}
\end{split}
\end{align}
We begin with the second, third, and fourth sums, with are lower-order. The second sum is
\begin{align}
R_{aa} \sum_{j,k} A_{aj} \bar R_{jk} R_{ka}
= R_{aa} (A \bar R R)_{aa} = 
\Oparity{\even}{ N \Psi^2 }
+ \Oparity{\odd}{N \Psi^2} + O_\prec(N^{-D}), 
\end{align}
by \eqref{eqn:parity-rr-aa} and \eqref{e:arr-parity}. 
The third sum is
\begin{align}
(RA{\bar R})_{aa} R_{aa} - R_{aa}(A{\bar R})_{aa} R_{aa} =\Oparity{\even}{N^{1/2}\Psi}+\Oparity{\odd}{1}+O_\prec(N^{-D}) , 
\end{align}
by \eqref{eqn:parity-rar-aa} and \Cref{l:march-fix}. 
The fourth sum is 
\begin{align}\label{e:fourthsum1}
\begin{split}
 \sum_{i, k \neq a } R_{ai} A_{ia} \bar R_{ak} R_{ka} &= \sum_{i, k \neq a }  \left(-R_{a a} \sum_{r \notin \{a,b\}} h_{a r} R_{ri}^{(a)}\right) A_{ia} \left| R_{a a} \sum_{s \notin \{a,b\}} h_{a s} R_{ks}^{(a)}\right|^2\\
 &= -R_{aa} |R_{aa}|^2 
\left(
\sum_{r\notin \{a,b\}} h_{ra} (R^{(a)} A)_{ra}
\right)
 \left( \sum_{s,t \notin \{a,b\}} h_{as} h_{at} (R^{(a)} \bar R^{(a)})_{st} \right)\\
 &= 
\Oparity{\odd}{N \Psi^2}. 
\end{split}
\end{align}

Continuing from \eqref{e:rarr-prelim},  the first (leading-order) sum is 
\begin{align}\label{e:march-mainterm}
&\sum_{i,j,k\neq a } R_{ai} A_{ij} \bar R_{jk} R_{ka}
\\
&= 
\sum_{i,j,k}
\left(-R_{a a} \sum_{r \notin \{a,b\}} h_{a r} R_{ri}^{(a)}\right) A_{ij}
\left( \bar R_{jk}^{(a)}+ \bar R_{a a} \sum_{s,t \notin \{a,b\}} \bar R_{j s}^{(a)}\bar R_{tk}^{(a)} h_{ra}  h_{a t}   \right)
\left(-R_{a a} \sum_{u \notin \{a,b\}} h_{a u} R_{ku}^{(a)}\right) \notag\\
&= R_{aa}^2 \sum_{r{,u}\notin \{a,b\}} h_{ar} h_{au}
( R^{(a)} A \bar R^{(a)} R^{(a)})_{ru}
+
|R_{aa}|^2 R_{aa}\hspace{-1.5em}\sum_{r, s, t, u \notin \{a,b\}} h_{ar} h_{as} h_{at} h_{au}
( R^{(a)} A \bar R^{(a)})_{rs} (\bar R^{(a)} R^{(a)})_{tu}.
\notag
\end{align}
These terms are all even, and can be bounded using \Cref{lem:parity-r}, \Cref{lem:parity-rr}, \Cref{lem:parity-rar-rarr},{and \eqref{eqn:3-resolvents-traceless-ra}}. This completes the argument for  \eqref{eqn:parity-rarr-aa}. The computation for \eqref{eqn:parity-rarr-ab} is extremely similar, and hence omitted.

Next, we prove \eqref{eqn:parity-rarr-ba}; the proof of \eqref{eqn:parity-rarr-aa} is analogous and omitted. 
We
\begin{align}
\begin{split}\label{e:rarr-prelim-2}
(R A \bar{R} R)_{c a}  &=
\sum_{i,j,k \neq a } R_{ci} A_{ij} \bar R_{jk} R_{ka}
+  \sum_{j,k  } R_{ca} A_{aj} \bar R_{jk} R_{ka}
\\
&\quad + \sum_{i \neq a } \sum_j R_{ci} A_{ij} \bar R_{ja} R_{aa} +  \sum_{i, k \neq a } R_{ci} A_{ia} \bar R_{ak} R_{ka}.
\end{split}
\end{align}
The first term is $\Oparity{\odd}{N^{3/2}\Psi^{9/4}}$, as can be shown nearly identically to the bound for \eqref{e:march-mainterm}. The second sum is
\begin{align}
R_{ac} \sum_{j,k} A_{aj} \bar R_{jk} R_{ka}
= R_{ac} (A \bar R R)_{aa} = 
\Oparity{\even}{ N \Psi^3 }
+ \Oparity{\odd}{N \Psi^3} + O_\prec(N^{-D}), 
\end{align}
by \eqref{eqn:parity-rr-ab} and \eqref{e:arr-parity}. The third sum is 
\begin{align}
(RAR)_{ca} R_{aa} - R_{ca}(AR)_{aa} R_{aa} = \Oparity{\odd}{N^{1/2} \Psi }+\Oparity{\even}{\Psi} + O_\prec(N^{-D}).
\end{align}
For the fourth sum, we have 
\begin{align}
\sum_{i, k \neq a } R_{ci} A_{ia} \bar R_{ak} R_{ka} =
\left(\sum_{i\neq a}R_{ci} A_{ia} \right) \left( \sum_{k\neq a} \bar R_{ak} R_{ka}\right)
\end{align}
It was shown in \eqref{e:fourthsum1} that 
\begin{align}
\sum_{k\neq a} \bar R_{ak} R_{ka}
= \Oparity{\even}{N \Psi^2}
\end{align}
Further, by \eqref{e:ar-odd},
\begin{align}
\sum_{i\neq a}R_{ci} A_{ia}
= (RA)_{ca} {-} R_{ca} A_{aa} = 
\Oparity{\odd}{{\Psi} }+
\Oparity{\even}{\Psi} + \Oparity{\even}{|A_{ac}|} + O_{\prec}\left(N^{-D}\right).
\end{align}
This completes the proof.
\end{proof}
\begin{remark}
    We note that the bounds we give for the even graded polynomials in \eqref{eqn:parity-rarr-ab} and \eqref{eqn:parity-rarr-ba} differ in the subleading error terms. The more refined bound for the latter quantity is needed in the proof of \Cref{lem:parity-x3}. 
\end{remark}

Using Lemma \ref{lem:parity-r}, Lemma \ref{lem:parity-rr}, Lemma \ref{lem:parity-rar-rarr} and the definitions of $x_i,y_i$ in Lemma \ref{lem:resolvent-expansion}, we have the following lemma. We omit the proof, since it is a straightforward adaptation of the proof of Lemma~\ref{lem:resolvent-expansion}. 
\begin{lemma}\label{lem:parity-xy}
Let $x_i,i=1,2$ and $y_i,i=1,2,3$ be defined as in Lemma \ref{lem:resolvent-expansion}. For spectral parameters $z=E+\iu\eta$ satisfying $E\in I_\ell$ and $\eta=\eta_\ell$,
we have
\begin{align*}
x_1&=\Oparity{\odd}{N^{1+\delta/2}\Psi^{5/4}}+\Oparity{\even}{N^{1/2+\delta/2}\Psi}+O_\prec(N^{-D}),\\
x^R,x_2&=\Oparity{\even}{N^{1+\delta/2}\Psi^{5/4}}+\Oparity{\odd}{N^{1/2+\delta/2}\Psi}+O_\prec(N^{-D}),\\
y_1,y_3&=\Oparity{\odd}{N^{6 \epsilon_1} \Psi}+O_\prec(N^{-D}),\\
y^R, y_2&=\Oparity{\even}{N^{6 \epsilon_1}  \Psi}+O_\prec(N^{-D}).
\end{align*}
\end{lemma}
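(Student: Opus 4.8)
The plan is to feed the graded-polynomial representations of Lemmas~\ref{lem:parity-r}, \ref{lem:parity-rr}, and \ref{lem:parity-rar-rarr} into the explicit formulas for the resolvent-expansion coefficients $x_1, x_2, x^R$ and $J_1, J_2, J_3$ produced in the proof of Lemma~\ref{lem:resolvent-expansion}, and then to multiply out the resulting products using the algebraic rules for $O_{\prec,\even}$ and $O_{\prec,\odd}$ recorded after Definition~\ref{def:graded-poly}. The first step is to unwind these coefficients. Since $U = H - Q$ is supported only on the two entries $(a,b)$ and $(b,a)$, every factor of $U$ in the resolvent expansion \eqref{eqn:resolvent-expansion} (for $x^G - x^R$) and in $\Tr G - \Tr R$ (for the $J_r$, hence the $y_r$) splits a string of resolvents by a rank-one matrix $\e_c\e_d^\trans$ with $c, d \in \{a,b\}$. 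Consequently each $x_r$ is a fixed $\R$-linear combination --- with deterministic coefficients, among them the scalar $P := \tfrac{\eta_\ell}{\pi}\sqrt{N^3/(2|\I|(N-|\I|))}$ and a harmless $(1+\delta_{ab})^{-1}$ --- of multi-resolvent entries of the shape $(S_1 A S_2 S_3)_{cd}$ with $S_i \in \{R,\bar R\}$ and $c,d\in\{a,b\}$, times (for $r \ge 2$) one or two further resolvent entries $S_{**}$ carrying indices in $\{a,b\}$; likewise $x^R = P\,\Tr(R A \bar R)$, and each $J_r$ is such a combination of entries $(S_1 S_2)_{cd}$. In every case the total number of indices equal to $a$ among these entries is $\equiv r \pmod 2$ (and $0$ for $x^R$), which is what will fix the leading parity.

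The second step is the substitution. For every entry occurring above, Lemmas~\ref{lem:parity-r}--\ref{lem:parity-rar-rarr} --- together with their verbatim analogues for $\bar R$ and for $R^{(a)}$, which hold by the same proofs since conjugation only replaces $\m$ by $\bar\m$ and $|\bar\m| = |\m|$ --- provide a decomposition into an $O_{\prec,\even}$ part, an $O_{\prec,\odd}$ part, and an $O_\prec(N^{-D})$ remainder, with an entry being odd or even according to whether it carries an odd or an even number of indices equal to $a$. Multiplying these out with $O_{\prec,\odd}(K_1)O_{\prec,\odd}(K_2) = O_{\prec,\even}(K_1 K_2)$ and $O_{\prec,\odd}(K_1)O_{\prec,\even}(K_2) = O_{\prec,\odd}(K_1 K_2)$, one reads off that the leading parity is odd for $x_1$ and even for $x_2$ and $x^R$. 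To track sizes, note $|\I| \ge N^{1-\delta}$ and $N - |\I| \ge \tfrac12 N^{1-\delta}$ give $|P| \le C\eta_\ell N^{(1+\delta)/2}$, while \eqref{eqn:psi-bound-specified} gives $N\eta_\ell\Psi(z) \le C$ for $z = E + \iu\eta_\ell$ with $E \in I_\ell$, so every factor $\eta_\ell N\Psi$ may be absorbed into a constant. For example, by \eqref{eqn:parity-rarr-ab}--\eqref{eqn:parity-rarr-ba} the odd part of $x_1$ is at most $|P|\,N^{3/2}\Psi^{9/4} \lesssim \eta_\ell N^{(1+\delta)/2 + 3/2}\Psi^{9/4} = N^{1+\delta/2}\,(N\eta_\ell\Psi)\,\Psi^{5/4} \lesssim N^{1+\delta/2}\Psi^{5/4}$, while its even part is at most $|P|\,N\Psi^2 \lesssim N^{1/2+\delta/2}\Psi$; the bounds for $x_2$ are obtained the same way, and for $x^R$ one also uses that the $Q^{(a)}$-measurable piece $P\,\Tr(\ra A\rab)$ is $O_{\prec,\even}$ of the required size (since $|\Tr(\ra A\rab)| \prec N^{1/2}\Psi$ by summing \eqref{eqn:2-resolvents-traceless-ra}, up to negligible corrections from the $a$-th row and column).

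For $y_1, y_2, y_3$ the Helffer--Sj\"ostrand representation \eqref{eqn:regularized-y} is linear in $\Tr G$, so it acts term-by-term on the graded-polynomial decomposition of $J_r$ from the previous step (obtained via \eqref{eqn:parity-r1r-ab} and its relatives): integrating each $Q^{(a)}$-measurable coefficient against the smooth weights $f_E$ and $\tilde f$ keeps it $Q^{(a)}$-measurable and preserves the parity. The weights and the cutoff $\bm 1(|\sigma| > \tilde\eta_\ell)$ confine the spectral parameter $e + \iu\sigma$ to $\bm S$ --- here one uses $\tilde\eta_\ell = \Delta_\ell N^{-\delta_5} \ge N^{-1+\tau/10}$ for $\ell \le N^{1-\tau}$, and that $\tilde f'$ is already supported away from $\sigma = 0$ --- so the ``$z \in \bm S$'' forms of Lemmas~\ref{lem:parity-r}--\ref{lem:parity-rr} apply throughout. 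The size bound is then downgraded from $N\Psi^2$ to $\Psi$ by exactly the computation behind Lemma~\ref{lem:resolvent-expansion}(3), namely $\int |f_E''(e)\tilde f(\sigma)|\,\bm 1(|\sigma| > \tilde\eta_\ell)\, N\Psi(e+\iu\sigma)^2\, \d e\, \d\sigma \lesssim \Psi$ and its analogues for the $f_E\tilde f'$ and $f_E'\tilde f'$ weights, which is \cite[Lemma~7.7]{BloKnoYauYin16}. This yields $y_1, y_3 = O_{\prec,\odd}(\Psi) + O_\prec(N^{-D})$ and $y_2 = O_{\prec,\even}(\Psi) + O_\prec(N^{-D})$.

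The step I expect to require the most care is the parity bookkeeping of the products: one must confirm that the advertised parity is carried by the \emph{dominant} monomial of each product (e.g.\ that the even contribution to $x_1$ is genuinely subleading to the odd one, and that the explicit combinatorial coefficients do not annihilate the leading odd term), and that expanding in the $a$-th row and column respects the parity in the mildly asymmetric multi-resolvents $(RA\bar RR)_{ab}$ versus $(RA\bar RR)_{ba}$, as flagged in the remark after Lemma~\ref{lem:parity-rar-rarr}. This is also where the edge decay $N\eta_\ell\Psi \le C$ gets used twice (for $x_1$), which has no bulk counterpart, and it is precisely why the refined $*$-graded bounds --- rather than the plain stochastic-domination bounds of Lemma~\ref{lem:resolvent-expansion} --- are the ones that ultimately produce the extra $N^{-1/2}$ when expectations are taken via Lemma~\ref{lem:odd-poly}.
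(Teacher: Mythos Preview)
Your approach is correct and is exactly what the paper does (it omits the proof, calling it ``straightforward'' from Lemmas~\ref{lem:parity-r}, \ref{lem:parity-rr}, \ref{lem:parity-rar-rarr} and the definitions in Lemma~\ref{lem:resolvent-expansion}). The parity bookkeeping you describe, the size calculation for $x_1$ via $|P|\,N^{3/2}\Psi^{9/4}\lesssim N^{1+\delta/2}\Psi^{5/4}$ using $N\eta_\ell\Psi\le C$, and the treatment of $y_i$ through the Helffer--Sj\"ostrand integral are all on target.

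One concrete slip: your bound $|\Tr(\ra A\rab)|\prec N^{1/2}\Psi$ ``by summing \eqref{eqn:2-resolvents-traceless-ra}'' is off by a factor of $N$. Summing the isotropic bound $|(\ra A\rab)_{cc}|\prec N^{1/2}\Psi$ over $c\neq a$ gives $N^{3/2}\Psi$, not $N^{1/2}\Psi$. The right input here is the \emph{averaged} local law \eqref{eqn:avg} (with $k=2$, $m=1$, noting $\Tr M(z,A,\bar z)=|\m|^2\Tr A=0$), which yields $|\Tr(\ra A\rab)|\prec\eta^{-3/2}$ and hence an even part of $x^R$ of size $N^{1+\delta/2}\Psi^{1/2}$. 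This is weaker than the $\Psi^{5/4}$ recorded in the lemma (which appears to be a small misprint for $x^R$), but it is precisely the bound proved in Lemma~\ref{lem:resolvent-expansion} and is entirely sufficient for the downstream estimates on $P_{(0,\ldots)}$ in the proof of Lemma~\ref{lem:third-moment-terms}, since $x^R$ only ever occurs multiplied by at least one $y_{l_i}=O_{\prec}(\Psi)$.
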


To bound $x_3$, we need an extra lemma. 
\begin{lemma}\label{lem:parity-x3}
Denote by 
    $\Oparity{\odd,b}{K}\,$
 the graded polynomial expanded in the $b$-th row and column of $Q$ instead of $a$-th row and column. For spectral parameters $z=E+\iu\eta$ satisfying $E\in I_\ell$ and $\eta=\eta_\ell$,
we have
\begin{align}\label{eqn:parity-x3}
\begin{split}
    x_3=&\Oparity{\odd}{N^{1+\delta/2}\Psi^{5/4}}+\Oparity{\odd,b}{N^{1+\delta/2}\Psi^{5/4}}\\
    & + \Oparity{\even,b}{N^{1/2+\delta/2} \Psi^2  } + 
    \Oparity{\even}{N^{1/2+\delta/2} \Psi^2  }\\ &+ \Oparity{\even,b}{N^{1/2+\delta/2} \Psi |A_{ab}|   } + 
    \Oparity{\even}{N^{1/2+\delta/2}\Psi  |A_{ab}|   }+\Od
\end{split}
\end{align}
%
\end{lemma}
\begin{proof}
Note that, by the definition of $x_3$, the resolvent terms in $x_3$ come from the third line of \eqref{eqn:resolvent-expansion}. The resolvent terms in the third line of \eqref{eqn:resolvent-expansion} have one of the following forms:
\begin{equation}
 (R A \bar R R)_{**} R_{**} R_{**}
, \quad  
( \bar R R)_{**} R_{**} (R A \bar R)_{**}, \quad (RA \bar R )_{**} \bar R_{**} (\bar R R)_{**}  ,\quad 
(\bar R R A R )_{**} \bar R_{**} \bar R_{**}.
\end{equation}
Here, each $*$ denotes an index that is either $a$ or $b$. Further, in adjacent factors in the products, the second $*$ in the first factor must differ from the first $*$ in the second factor (and analogously for the first and last factors). We will discuss how to handle the contributions from the first two kinds of terms; the latter two kinds are analogous (since, up to conjugation and symmetry, they are the same as the others).

By \eqref{eqn:resolvent-expansion} and the definition of $x_3$, 
\begin{align}
    \frac{\pi \sqrt{\Tr(A^2)}}{N\eta_\ell} x_3
    =& -(RA\bar RR)_{ab}R_{aa}R_{bb}-(RA\bar RR)_{ba}R_{bb} R_{aa}\label{e:rarr1}\\
    & -(RA\bar RR)_{ab}R_{ab}R_{ab}-(RA\bar RR)_{ba}R_{ba} R_{ba}\label{e:rarr2}\\
    & -(RA\bar RR)_{aa}R_{bb}R_{ab}-(RA\bar RR)_{bb}R_{ab} R_{aa}\label{e:rarr3}\\
    &-(RA\bar R)_{ab}(\bar R R)_{aa}R_{bb}-(RA\bar R)_{ba}(\bar R R)_{bb}R_{aa}\label{e:rar1}\\ 
    &-(RA\bar R)_{ab}(\bar R R)_{ab}R_{ab}-(RA\bar R)_{ba}(\bar R R)_{ba}R_{ba}\label{e:rar2}\\ 
    &  -(RA\bar R)_{aa}(\bar R R)_{ba}R_{bb}-\left(RA\bar R\right)_{bb}(\bar R R)_{ab}R_{aa} +[\dots],\label{e:rar3}
\end{align}
where $[\dots]$ denotes terms omitted according to the previous discussion. 
For the first term on the right-hand side of \eqref{e:rarr1}, using resolvent identities with respect to the $b$-th row and column as in the proof of Lemma \ref{lem:parity-r} {and Lemma \ref{lem:parity-rar-rarr-2}}, we have
\begin{align}
\begin{split}\label{e:alt-expand}
    (RA\bar RR)_{ab}&=\Oparity{\odd,b}{N^{3/2}\Psi^{9/4}}+\Oparity{\even,b}{N\Psi^3} + \Oparity{\even,b}{N\Psi^2|A_{ab}|} +\Od,\\
    R_{aa}&=\Oparity{\even,b}{1}+\Od,\\
    R_{bb}&=\Oparity{\even,b}{1}+\Od,
\end{split}
\end{align}
and combining these estimates gives the desired bound. The second term in \eqref{e:rarr1} is bounded similarly. The terms in lines \eqref{e:rarr2} and \eqref{e:rarr3} are bounded using Lemma \ref{lem:parity-r} {and Lemma \ref{lem:parity-rar-rarr-2}}; these are simpler to bound than the previous line, due to the presence of additional off-diagonal resolvent entries {and the fact that $(RA\bar RR)_{aa}$ and $(RA\bar RR)_{ab}$ have the same bound in $\Oparity{*,b}{}$ in Lemma \ref{lem:parity-rar-rarr-2}.}

Similarly, the lines \eqref{e:rar1}, \eqref{e:rar2}, and \eqref{e:rar3} 
are quickly bounded using \Cref{lem:parity-r}, \Cref{lem:parity-rr}, and \Cref{lem:parity-rar-rarr}. For the $(RA\bar R)_{aa}$ term in \eqref{e:rar3}, one uses the estimate analogous to \eqref{eqn:parity-rar-bb} coming from expanding in $b$ (as in \eqref{e:alt-expand}) and treats $a$ as an off-diagonal entry.
\end{proof}
\begin{remark}
    Note that it is still legitimate to apply Lemma \ref{lem:odd-poly} to \eqref{eqn:parity-x3}. By linearity of expectation, we may apply Lemma \ref{lem:odd-poly} to $\Oparity{\odd}{K}$ and $\Oparity{\odd,b}{K}$ separately.
\end{remark}

We are now ready for the proof of Lemma \ref{lem:third-moment-terms}.
\begin{proof}[Proof of \Cref{lem:third-moment-terms}]
Recall from \Cref{lem:parity-xy} that 
\begin{equation}\label{e:yRrenew}
y^R=\Oparity{\even}{N^{6 \epsilon_1}  \Psi}+O_\prec(N^{-D}).
\end{equation}
Note that for $N \ge N(\tau)$, this implies (recall \Cref{para}) that 
\begin{equation}\label{e:yR-small}
y^R = O_\prec(N^{-\epsilon_1}).
\end{equation}
By Taylor expansion around $0$, we have for every integer $K \ge 1$ that 
\begin{align}\label{e:q-taylor}
q(y^R)  = \sum_{j=0}^K \frac{ q^{(j)}(0) }{j!} (y^R)^j + E_K,
\end{align}
where $E_K$ is a $K$-dependent error term satisfying
\begin{equation}
| E_K | \le C_K \| q^{(K+1)}\|_\infty |y^R|^{K+1}.
\end{equation}
For $K \ge K_0(\tau,D)$, we have by \eqref{e:yR-small} that $|y^R|^{K+1} = O_\prec( N^{-D})$, and hence $E_K = O_\prec( N^{-D})$. Recalling \eqref{e:yRrenew} and \eqref{e:q-taylor}, this gives 
\begin{equation}\label{e:qyR}
q(y^R) = \Oparity{\even}{1} + O_\prec( N^{-D}),
\end{equation}
where the leading-order term comes from the $j=0$ term in \eqref{e:q-taylor}. 
Similarly, for $k=1,2,3$, 
\begin{equation}
q^{(k)}(y^R) = \Oparity{\even}{1} + O_\prec( N^{-D}).
\end{equation}

Next, note that by \Cref{lem:parity-xy} and \eqref{e:qyR},
\begin{align}
\int_{I_\ell}x^Rq(y^R)\, \d E&=\Oparity{\even}{N^{3\delta/2+{\delta_1}} \Psi^{1/4} }+\Oparity{\odd}{N^{-1/2+3\delta/2+{\delta_1}}}
\end{align}
{where we used the fact that 
\[
\vert I_\ell\vert = 2N^{\delta_2}\Delta_\ell = 2N^{\delta_2+\delta_1}\eta_\ell = O\left( \frac{N^{\delta+\delta_1}}{N\Psi}\right).
\]
by Definition \ref{def:regularized}, Definition \ref{para}, and Lemma \ref{lem:prelim-bound}}.
Recall that $g$ is smooth and compactly supported. 
By 
Taylor expansion around $0$ (as in the argument for \eqref{e:qyR}), for all $k=1,2,3$ we have
\begin{align}
g^{(k)}\left(\int_{I_\ell}x^Rq(y^R)\, \d E\right)&=\Oparity{\even}{1 }+\Oparity{\odd}{N^{-1/2+\delta/2}}+ O_\prec( N^{-D}).\label{eqn:parity-g}
\end{align}
The same graded polynomial expansion holds for the expansion with respect to the $b$-th row and column.

We now proceed to bound the terms identified in the lemma statement. Define
\begin{align*}
    \Psi_\ell=(N\Delta_\ell)^{-1} \qquad
    \widehat N=N^{3\delta/2 + {\delta_1} + 18 \epsilon_1} 
    , 
\end{align*}
and note that 
\begin{align*}\Psi_\ell \le N^{-\tau/3},\qquad \widehat N \le N^{\tau/100}. 
\end{align*}
Combining Lemma \ref{lem:parity-xy}, the definition of $P_{\bm l}$, Remark \ref{rmk:stochastic-continuation}, and \eqref{eqn:psi-bound-specified}, we have
\begin{equation*}
\begin{aligned}
P_{(1)},P_{(0,1)}&=\Oparity{\odd}{\widehat N\Psi_\ell^{1/4}}+\Oparity{\even}{\widehat NN^{-1/2}}+\Od,\\
P_{(2)},P_{(0,2)},P_{(1,1)},P_{(0,1,1)}&=\Oparity{\even}{\widehat N\Psi_\ell^{1/4}}+\Oparity{\odd}{\widehat NN^{-1/2}}+\Od.
\end{aligned}
\end{equation*}
We also have
\[
    P_{(0,3)},P_{(1,2)},P_{(2,1)},P_{(0,1,2)},P_{(1,1,1)},P_{(0,1,1,1)}=\Oparity{\odd}{\widehat N\Psi_\ell^{5/4}}+\Oparity{\even}{\widehat NN^{-1/2}\Psi_\ell}+\Od.
\]
By Lemma \ref{lem:parity-x3}, 
\begin{align*}
    P_{(3)}=&\Oparity{\odd}{\widehat N\Psi_\ell^{1/4}}+\Oparity{\odd,b}{\widehat N\Psi_\ell^{1/4}}\\
    &+\Oparity{\even}{\widehat NN^{-1/2}|A_{ab}| }+\Oparity{\even,b}{\widehat NN^{-1/2}|A_{ab}|}\\
&+\Oparity{\even}{\widehat NN^{-1/2}\Psi_\ell}+\Oparity{\even,b}{\widehat NN^{-1/2}\Psi_\ell}+\Od.
\end{align*}
Combining these bounds with \eqref{eqn:parity-g} and Lemma \ref{lem:odd-poly}, we conclude that there exists a constant $c =  c(\tau)>0$ such that
\begin{align*}
\begin{split}
    \E [Y]\prec N^{-1/2-c}(1 + |A_{ab}| \Psi^{-1}) +N^{-D}\leq N^{-1/2-c}(1 + |A_{ab}| \Psi^{-1}),
\end{split}
\end{align*}
when $Y$ represents any term in \eqref{eqn:third-moment-terms}.
\end{proof}

\begin{remark}
We chose to prove \Cref{lem:third-moment-terms} using expansions based on the Schur complement formula for convenience. It likely also possible to prove it using cumulant expansions, as in \Cref{sec:local-law}, but we do not pursue this alternative here. 
\end{remark} 

\begin{remark}\label{r:theproblem}
We now comment on the hypothesis  $\Tr(A^2)\geq N^{1-\delta}$ in \Cref{thm:CLT}. The theorem should surely hold under a much weaker condition, for instance $\Tr(A^2)\geq N^\delta$. 
However, some of our technical inputs do not seem sharp enough to established this improved result. Consider, for instance, the estimate on $x_3$ in \eqref{e:rar3}. If $\Tr(A^2)$ is made smaller, this must be offset by an improved estimate on terms such as $(RAR)_{aa}$ to obtain the same bound for $x_3$. However, our estimates on the entries of $RAR$ are not sensitive to the size of $\Tr(A^2)$.

Specifically, the proof of \eqref{eqn:2-resolvents-traceless} in \Cref{sec:local-law} uses \eqref{eqn:center-iso} and \eqref{eqn:iso}. These bounds appear suboptimal for $A$ such that $\Tr(A^2)$ is small; consider, for example, the matrix $A$ with a single entry nonzero entry, $A_{12} = 1$. Then \eqref{eqn:2-resolvents-traceless} gives a bound of order $N^{1/2} \Psi$ for $(GAG)_{11}$, but $(GAG)_{11} = G_{11} G_{12}$ has order $\Psi$, by \eqref{eqn:2-resolvents}. 

\end{remark}

\appendix
\section{Proof of Theorem~\ref{thm:GOE-CLT}}\label{s:goeclt}
\begin{proof}[Proof of \Cref{thm:GOE-CLT}]

Because the distribution of $H$ is invariant under conjugation by orthogonal matrices, the eigenvector $\bm u$ of $H$ is uniformly distributed on $\mathbb S^{N-1}$.
Then by diagonalizing $A$, we may assume without loss of generality that $A$ is a diagonal matrix with diagonal entries $\bm \mu=(\mu_1,\ldots,\mu_N)$. By the assumptions of the theorem, we have 
\begin{align}
        \sum_{i=1}^N \mu_i&=0,\label{sum0}\\
        \sum_{i=1}^N \mu_i^2&\geq N^{1-\delta},\label{sum2}\\
        \max_{1\leq i\leq N}|\mu_i|&\leq 1\label{max1}.
\end{align}
By radial symmetry of the multi-dimensional gaussian distribution, it follows that
\begin{equation*}
    \bm u\stackrel{(d)}{=}\frac{\bm g}{\|\bm g\|},
\end{equation*}
where $\bm g=(g_1,\ldots,g_N)\in \mathbb R^N$ consists of independent standard gaussian random variables. Note that, by \eqref{sum0}, 
\[\sum_{i=1}^N\mu_ig_i^2=\sum_{i=1}^N\mu_i(g_i^2-1),\] so it suffices to show that
\begin{equation}\label{goal7}
    \frac{N}{\sqrt 2\|\bm \mu\|}\frac{\sum_{i=1}^N\mu_i(g_i^2-1)}{\|\bm g\|^2}\rightarrow \mathcal N(0,1)
\end{equation}
in distribution.

To this end, we check the \textit{Lindeberg's condition} for the sum
\begin{equation*}
\frac{1}{\sqrt 2\|\bm \mu\|}\sum_{i=1}^N\mu_i(g_i^2-1).
\end{equation*}
Fix any $\epsilon >0 $. For sufficiently large $N\ge N_0(\epsilon)$, we have
\begin{align*}
    &\sum_{i=1}^N\E\left[\frac{\mu_i^2\left(g_i^2-1\right)^2}{\|\bm \mu\|^2}\bm 1_{(\epsilon,\infty)}\left(\frac{|\mu_i(g_i^2-1)|}{\|\bm \mu\|}\right)\right]\\
    &=\sum_{i=1}^N\int_{\epsilon^2}^\infty \P\left(\mu_i^2\left(g_i^2-1\right)^2>x\|\bm \mu\|^2\right)dx+\sum_{i=1}^N\epsilon^2\P\left(\mu_i^2\left(g_i^2-1\right)^2>\epsilon^2\|\bm \mu\|^2\right)\\
    &\leq N\int_{\epsilon^2}^\infty \P\left((g_1^2-1)^2>N^{1-\delta}x\right)dx+N\epsilon^2\P\left((g_1^2-1)^2>N^{1-\delta}\epsilon^2\right)\\
    &\leq N\int_{\epsilon^2}^\infty \P\left(|g_1|>\frac{1}{2} N^{(1-\delta)/ {4}}x^{1/4}\right)dx +N\epsilon^2\P\left(|g_1|>\frac{1}{2} N^{(1-\delta)/ {4}}\epsilon^{1/2}\right)\\
    &\leq 
     4 N^{1-(1-\delta)/2} \epsilon^{-1} 
     \exp\left( - \frac{\epsilon N^{(1-\delta)/2}}{8} \right)
    +  2  N^{1-(1-\delta)/4} \epsilon^{3/2} 
    \exp\left(- \frac{\epsilon N^{(1-\delta)/2}}{8}\right),
\end{align*}
where we use \eqref{sum2} and \eqref{max1} in the first inequality, and a standard Gaussian tail bound in the last inequality.
Then 
\begin{equation}\label{lindeberg}
\begin{aligned}
    \lim_{N\rightarrow\infty}\sum_{i=1}^N\E\left[\frac{\mu_i^2\left(g_i^2-1\right)^2}{\|\bm \mu\|^2}\bm 1_{(\epsilon,\infty)}\left(\frac{|\mu_i(g_i^2-1)|}{\|\bm \mu\|}\right)\right]=0.
\end{aligned}
\end{equation}
Together with \eqref{lindeberg}, $\E\left[\mu_i(g_i^2-1)\right]=0$,  and $$\operatorname{Var}\left(\frac{\sum_{i=1}^N\mu_i(g_i^2-1)}{(\sqrt 2\|\bm \mu\|)}\right)=1,$$
Lindeberg's central limit theorem \cite[Theorem~8.13]{ccinlar2011probability} implies that
\begin{equation*}
\begin{aligned}
    \frac{\sum_{i=1}^N\mu_i(g_i^2-1)}{\sqrt 2\|\bm \mu\|}\rightarrow\mathcal N(0,1)
\end{aligned}
\end{equation*}
in distribution. Combining this and the almost sure convergence 
\begin{equation*}
\frac{N}{\|\bm g\|^2}\rightarrow 1
\end{equation*}
guaranteed by the law of large numbers finishes the proof of \eqref{goal7}.
\end{proof}
\begin{remark}\label{r:momcvg}
The conclusion of the theorem can be strengthened to convergence in moments. See \cite[Theorems~2.3 and 2.4]{o2016eigenvectors} for the case where $A$ is a projection; the general case can be proved by straightforward, but tedious, moment computations. Using this improved result, the conclusion of Theorem~\ref{thm:CLT} can also be strengthened to convergence in moments.
\end{remark}

\section{Proof of Lemma \ref{lem:three-resolvent-local-law}}\label{sec:local-law}
The proof is based on the following cumulant expansion lemma, which can be found in \cite[Lemma 3.2]{lee2018local}.
\begin{lemma}[Cumulant expansion]\label{lem:cumulant-expansion}
Fix $T \in \mathbb{N}$ and let $F:\R\rightarrow\mathbb C^+$ be $T+1$ times continuously differentiable. Let $Y$ be a mean zero random variable with finite moments to order $T+2$. Then there exists a function $\Omega_T: \mathbb{C} \rightarrow \mathbb{C}$ such that
\begin{equation}\label{eqn:cumulant-theorem}
\mathbb{E}\big[Y F(Y)\big]=\sum_{r=1}^{T} \frac{\kappa^{(r+1)}(Y)}{r !} \mathbb{E}\left[F^{(r)}(Y)\right]+\mathbb{E}\Big[\Omega_{T}\big(Y F(Y)\big)\Big],
\end{equation}
where $\mathbb{E}$ denotes the expectation with respect to $Y, \kappa^{(r+1)}(Y)$ denotes the $(r+1)$-th cumulant of $Y$, and $F^{(r)}$ denotes the $r$-th derivative of the function $F$. Further, there exists a constant $C >0$ (not depending on $T$, $F$, or $Y$) such that for every $Q>0$,
\begin{equation*}
\begin{aligned}
\left|\mathbb{E}\Big[\Omega_{T}\big(Y F(Y)\big)\Big]\right| \leqslant \frac{(C T)^{T}}{T !} \left(   \mathbb{E}\left[|Y|^{T+2} \right] \cdot  \sup_{|t| \le Q }\left|F^{(T+1)}(t)\right| 
 + \mathbb{E}\left[|Y|^{T+2}\one\{|Y| > Q \} \right]\cdot \sup_{t \in \R }\left|F^{(T+1)}(t)\right| \right).
\end{aligned}
\end{equation*}
\end{lemma}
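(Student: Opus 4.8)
The plan is to read \eqref{eqn:cumulant-theorem} as the \emph{definition} of the error term $\E[\Omega_T(YF(Y))]$, so that the only thing to establish is the bound on it. Concretely, set $m_k\defeq\E[Y^k]$ and
\begin{equation*}
\mathcal E_T\defeq \E\big[Y F(Y)\big]-\sum_{r=1}^{T}\frac{\kappa^{(r+1)}(Y)}{r!}\,\E\big[F^{(r)}(Y)\big];
\end{equation*}
the goal is to show $|\mathcal E_T|\le \frac{(CT)^{T}}{T!}\,\E[|Y|^{T+2}]\,\sup_{t}|F^{(T+1)}(t)|$ for a numerical constant $C$. One may assume $M\defeq\sup_{t\in\R}|F^{(T+1)}(t)|<\infty$, since otherwise the bound is vacuous; then each $F^{(r)}$, $0\le r\le T$, grows at most polynomially of degree $T+1-r$, so every expectation above is finite because $\E[|Y|^{T+2}]<\infty$. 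This is the classical cumulant expansion, and an essentially identical argument is \cite[Lemma~3.2]{lee2018local}.

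The first step is Taylor expansion at $0$: for $0\le r\le T$, using only $F\in C^{T+1}$,
\begin{equation*}
F^{(r)}(Y)=\sum_{k=0}^{T-r}\frac{F^{(r+k)}(0)}{k!}\,Y^{k}+\frac{1}{(T-r)!}\int_{0}^{Y}(Y-t)^{T-r}\,F^{(T+1)}(t)\,\d t .
\end{equation*}
Substituting the $r=0$ case into $\E[YF(Y)]$ and the remaining cases into the cumulant sum, I would then check that the polynomial parts cancel exactly: using $\kappa^{(1)}(Y)=m_1=0$ the $F(0)$ term drops, and for each $1\le j\le T$ the coefficient of $F^{(j)}(0)$ is $m_{j+1}/j!$ from the first expansion and $\sum_{r=1}^{j}\frac{\kappa^{(r+1)}(Y)}{r!\,(j-r)!}\,m_{j-r}$ from the second, and these agree by the classical moment--cumulant recursion $m_{n}=\sum_{s=1}^{n}\binom{n-1}{s-1}\kappa^{(s)}(Y)\,m_{n-s}$ taken at $n=j+1$ (the $s=1$ term vanishing since $\kappa^{(1)}=0$). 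What remains is
\begin{equation*}
\mathcal E_T=\frac{1}{T!}\,\E\!\left[Y\int_{0}^{Y}(Y-t)^{T}F^{(T+1)}(t)\,\d t\right]-\sum_{r=1}^{T}\frac{\kappa^{(r+1)}(Y)}{r!\,(T-r)!}\;\E\!\left[\int_{0}^{Y}(Y-t)^{T-r}F^{(T+1)}(t)\,\d t\right].
\end{equation*}

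The last step is to bound these remainders. Estimating each inner integral by $M$ times $|Y|^{T+1}/(T+1)!$, resp.\ $|Y|^{T-r+1}/(T-r+1)!$, the first term is at most $M\,\E[|Y|^{T+2}]/(T+1)!$. For the $r$-th cumulant term I would invoke the standard combinatorial bound $|\kappa^{(s)}(Y)|\le C_0^{s}\,s!\,\E[|Y|^{s}]$ (which follows from $\kappa^{(s)}=\sum_{\pi}(-1)^{|\pi|-1}(|\pi|-1)!\prod_{B\in\pi}m_{|B|}$, the Lyapunov inequality $\E[|Y|^{|B|}]\le(\E[|Y|^{s}])^{|B|/s}$, and the fact that the number of ordered partitions of an $s$-element set is at most $2^{s}s!$), together with $\E[|Y|^{r+1}]\,\E[|Y|^{T-r+1}]\le\E[|Y|^{T+2}]$ (again Lyapunov); this bounds the $r$-th term by $C_0^{r+1}\,(r+1)\,M\,\E[|Y|^{T+2}]/(T-r+1)!$. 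Summing the resulting geometric-type series over $1\le r\le T$ gives $|\mathcal E_T|\le C_1^{T}\,(T+2)\,M\,\E[|Y|^{T+2}]$ for a numerical $C_1$, and since $T!\le T^{T}$ one has $C_1^{T}(T+2)\le (CT)^{T}/T!$ once $C$ is taken large enough relative to $C_1$, which is the claim.

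There is no substantive analytic obstacle here: the two points requiring care are the bookkeeping showing that the polynomial parts cancel exactly through the moment--cumulant recursion, and tracking constants in the combinatorial cumulant estimate precisely enough to land on the stated shape $(CT)^{T}/T!$. The complex-valuedness of $F$ plays no role, since Taylor's theorem with integral remainder applies verbatim to $\mathbb C$-valued functions of a real variable.
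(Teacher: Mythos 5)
The paper does not prove this lemma at all: it is quoted verbatim from the literature (the appendix opens with ``The proof is based on the cumulant expansion which can be found in \cite[Lemma 3.2]{lee2018local}''), so there is no in-paper argument to compare against. Your proposal supplies the standard self-contained proof, and it is correct: reading \eqref{eqn:cumulant-theorem} as the definition of the remainder, Taylor-expanding $F^{(r)}$ at $0$ with integral remainder, cancelling the polynomial parts via the moment--cumulant recursion $m_{j+1}=\sum_{r=1}^{j}\binom{j}{r}\kappa^{(r+1)}m_{j-r}$ (the $s=1$ term vanishing because $Y$ is centered), and then controlling the remainders with $|\kappa^{(s)}|\le C_0^{s}s!\,\E[|Y|^{s}]$ and Lyapunov's inequality is exactly the argument given in \cite{lee2018local} and its antecedents. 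The coefficient check is right: the coefficient of $F^{(j)}(0)$ is $m_{j+1}/j!$ on one side and $\sum_{r=1}^{j}\kappa^{(r+1)}m_{j-r}/(r!(j-r)!)$ on the other, and these coincide by the recursion. One small slip in wording: the inner integral $\int_0^Y(Y-t)^{T-r}F^{(T+1)}(t)\,\d t$ is bounded by $M|Y|^{T-r+1}/(T-r+1)$, not by $M|Y|^{T-r+1}/(T-r+1)!$; the extra factorial only appears after multiplying by the prefactor $1/(T-r)!$. Your subsequent displayed estimates already incorporate this correctly, so the final bound $|\mathcal E_T|\le C_1^{T}(T+2)M\,\E[|Y|^{T+2}]\le\frac{(CT)^{T}}{T!}M\,\E[|Y|^{T+2}]$ stands.
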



\begin{proof}[Proof of Lemma \ref{lem:three-resolvent-local-law}]
First, we claim that it suffices to prove the local laws in \eqref{eqn:2-resolvents} for the resolvent $G$, because the local laws for $R$ are straightforward consequences of the ones for $G$. We illustrate the procedure of deducing a local law for $R$ from the corresponding local law for $G$ for the first inequality in \eqref{eqn:2-resolvents}. The other deductions are similar. 

Pick any $z=E+\iu\eta\in\bm S$. Note that the first claim in \eqref{eqn:2-resolvents} when $S=G$ is just Theorem~\ref{thm:iso-single}. By \eqref{eqn:resolvent-expansion-original}, we have 
\begin{equation}\label{e:error-resolvent}
    R_{\bm x\bm y}=G_{\bm x\bm y}-(GUG)_{\bm x\bm y}+(GUGUG)_{\bm x\bm y}-((GU)^3G)_{\bm x\bm y}+((GU)^4R)_{\bm x\bm y}.
\end{equation}
Combining the estimates $h_{ab}\prec N^{-1/2}$, $\|R\|\leq \eta^{-1}$, $\Psi\geq C\tau^{1/4}N^{-1/2}$ from Lemma~\ref{lem:prelim-bound} and the isotropic local law \eqref{isotropic} for $G$, the first claim in \eqref{eqn:2-resolvents} also holds for $R$. 

In light of the previous discussion, we only prove the other two bounds in \eqref{eqn:2-resolvents} for $G$.
By Theorem \ref{thm:iso-single}, we have
\begin{equation}\label{a3}
\begin{aligned}
    (G\bar G)_{\x\y}=\sum_k G_{\x k}\bar G_{k\y}&\prec \left|\sum_k\langle\x,\e_k\rangle\langle\e_k,\y\rangle\right|+\left|\sum_k\langle\x,\e_k\rangle\right|\Psi+\left|\sum_k\langle\e_k,\y\rangle\right|\Psi+N\Psi^2\\
    &\leq |\langle\x,\y\rangle|+2N^{1/2}\Psi+N\Psi^2\prec N\Psi^2,
\end{aligned}
\end{equation}
where we use Cauchy--Schwarz inequality in the second-to-last inequality and $\Psi\geq C\tau^{1/4}N^{-1/2}$ on $\bm S$ in the last step (from  \eqref{eqn:psi-bound}). Similarly we have
\begin{equation}\label{a4}
    (G G )_{\x \y}\prec N\Psi^2.
\end{equation}
Then  \eqref{eqn:2-resolvents} follows from \eqref{isotropic}, \eqref{a3}, and \eqref{a4}.

Next, the expression \eqref{eqn:2-resolvents-traceless} is a direct consequence of \eqref{eqn:center-iso} and \eqref{eqn:iso} with two resolvents and one traceless matrix (and \eqref{eqn:psi-bound}). It remains to prove \eqref{eqn:3-resolvents-traceless}.

We proceed by computing the moments of the quantity $\left(GA\bar GG\right)_{cd}$. For the rest of the proof, we assume the spectral parameter $z=E+\iu\eta\in\bm S$ satisfies $E\in I_\ell$ and $\eta=\eta_\ell$ as in the statement of Lemma \ref{lem:three-resolvent-local-law}.

Let $D>1$ be a parameter. We have
\begin{equation}\label{eqn:multi-resolvent-moment}
\begin{aligned}
\mathbb{E} \left[z\left|(G A \bar{G} G)_{cd}\right|^{2 D}\right]= 
& \mathbb{E} \left[z (G A \bar{G} G)_{c  d }(G A \bar{G} G)_{c  d }^{D-1}(\bar{G} A G \bar{G})_{c  d }^D \right]\\
= & \mathbb{E} \left[\sum_{k} h_{c  k} (GA\bar GG)_{kd}(G A \bar{G} G)_{c  d }^{D-1}(\bar{G} A G \bar{G})_{c  d }^D\right] \\
& -\mathbb{E} \left[(A\bar{G} G)_{c  d }(G A \bar{G} G)_{c  d }^{D-1}(\bar{G} A G \bar{G})_{c  d }^D\right],
\end{aligned}
\end{equation}
where we used the identity $z G = HG -I$ in the last equality.

Let $\bm w_1,\ldots,\bm w_N$ be an orthonormal basis of $\R^N$ such that $\e_c^\trans A\bm w_1\leq 1$ and $\e_c^\trans A\bm w_i=0$ for $i=2,3,\cdots,N$.
We have the following high probability bound:
\begin{align}\label{b5}
\begin{split}
    \left|(A\bar GG)_{cd}\right|=\left| \sum_{i}\bm e_c^\trans A\bm w_i\bm w_i^\trans\bar GG\bm e_d\right|\leq(G\bar G)_{\bm w_1d}\prec N\Psi^2,
\end{split}
\end{align}
where we used \eqref{eqn:2-resolvents} and \eqref{eqn:psi-bound} in the last step.

Using Young's inequality with powers $p=2D$ and $q=(2D)/(2D-1)$, and \eqref{b5}, the last line in \eqref{eqn:multi-resolvent-moment} can be bounded by
\begin{equation}\label{eqn:4th}
\left|\mathbb{E}\left[(A\bar{G} G)_{c d}(G A \bar{G} G)_{c d}^{D-1}(\bar{G} A G \bar{G})_{c d}^D\right]\right| \leqslant N^{2 D \varepsilon} (N\Psi^2)^{2D}+N^{-(2 D \varepsilon)/(2 D-1)} \mathbb{E}\left[\left|(G A \bar{G} G)_{c d}\right|^{2 D}\right]
\end{equation}
for any small $\epsilon>0$, for $N\ge N_0(\epsilon)$. 

Applying Lemma \ref{lem:cumulant-expansion} to the second line in \eqref{eqn:multi-resolvent-moment}, with $T=12D$, we have 
\begin{equation}\label{eqn:cumulant-expansion}
\begin{aligned}
& \mathbb{E} \left[\sum_k h_{c k}(G A \bar{G} G)_{k d}(G A \bar{G} G)_{c d}^{D-1}(\bar{G} A G \bar{G})_{c d}^D\right] \\
& =  \sum_{r=1}^{ 20D } \frac{1}{r ! N^{(r+1) / 2}} \sum_k \kappa_{r+1}^{c,k}\mathbb{E}\left[\partial_{c k}^r(G A \bar{G} G)_{k d}(G A \bar{G} G)_{c d}^{D-1}(\bar{G} A G \bar{G})_{c d}^D\right]+\Omega,
\end{aligned}
\end{equation}
where $\kappa_{r}^{c,k}$ is the $r$-th cumulant of $\sqrt Nh_{ck}$ and $\Omega$ denotes the error term in \eqref{eqn:cumulant-theorem}.

We split the sum in \eqref{eqn:cumulant-expansion} into two cases and handle the error term $\Omega$ at the end.
\begin{enumerate}
\item When $r=1$, the summand is
\begin{align}\label{eqn:r=1}
\begin{split}
    \sum_{k}\frac{1+\delta_{ck}}{N}\mathbb{E}\left[\partial_{c k}(G A \bar{G} G)_{k d}(G A \bar{G} G)_{c d}^{D-1}(\bar{G} A G \bar{G})_{c d}^D\right].
\end{split}
\end{align}
Note that 
\begin{align*}
\begin{split}
    (1+\delta_{ck})\partial_{ck}G_{ij}=-G_{ic}G_{kj}-G_{ik}G_{cj}.
\end{split}
\end{align*}
We have
\begin{align}\label{eqn:integration-by-parts}
\begin{split}
    (1&+\delta_{ck})\partial_{ck}\left[(GA\bar GG)_{kd}\left(GA\bar GG\right)_{cd}^{D-1}\left(\bar GAG\bar G\right)_{cd}^{D}\right]\\
    =&-[G_{kk}(GA\bar GG)_{cd}+G_{kc}(GA\bar GG)_{kd}+(GA\bar G)_{kc}(\bar GG)_{kd}+(GA\bar G)_{kk}(\bar GG)_{cd}\\
    &\qquad+(GA\bar GG)_{kc}G_{kd}+(GA\bar GG)_{kk}G_{cd}]\cdot\left(GA\bar GG\right)_{cd}^{D-1}\left(\bar GAG\bar G\right)_{cd}^{D}\\
    &-(D-1)\cdot[G_{cc}(GA\bar GG)_{kd}+G_{ck}(GA\bar GG)_{cd}+\left(GA\bar G\right)_{cc}\left(\bar GG\right)_{kd}+\left(GA\bar G\right)_{ck}\left(\bar GG\right)_{cd}\\
    &\qquad+\left(GA\bar GG\right)_{cc}G_{kd}+\left(GA\bar GG\right)_{ck}G_{cd}]\cdot\left(GA\bar GG\right)_{kd}\left(GA\bar GG\right)_{cd}^{D-2}\left(\bar GAG\bar G\right)_{cd}^{D}\\
    &-D\cdot[\bar G_{cc}\left(\bar GAG\bar G\right)_{kd}+\bar G_{ck}\left(\bar GAG\bar G\right)_{cd}+\left(\bar GAG\right)_{cc}\left(G\bar G\right)_{kd}+\left(\bar GAG\right)_{ck}\left(G\bar G\right)_{cd}\\
    &\qquad+\left(\bar GAG\bar G\right)_{cc}\bar G_{kd}+\left(\bar GAG\bar G\right)_{ck}\bar G_{cd}]\cdot(GA\bar GG)_{kd}\left(GA\bar GG\right)_{cd}^{D-1}\left(\bar GAG\bar G\right)_{cd}^{D-1}.
\end{split}
\end{align}
Inserting \eqref{eqn:integration-by-parts} into \eqref{eqn:r=1}, we have 
\begin{equation}\label{eqn:r=1organized}
\begin{aligned}
\sum_{k}&\frac{1+\delta_{ck}}{N}\partial_{ck} (GA\bar GG)_{kd}(G A \bar{G} G)_{c d}^{D-1}(\bar{G} A G \bar{G})_{c d}^D \\
= & -m\left|(G A \bar{G} G)_{c d}\right|^{2 D} -\left(\sum_{i=1}^5 \alpha_i\right)(G A \bar{G} G)_{c d}^{D-1}(\bar{G} A G \bar{G})_{c d}^D \\
& -(D-1)\left(\sum_{i=1}^6 \beta_i\right)(G A \bar{G} G)_{c d}^{D-2}(\bar{G} A G \bar{G})_{c d}^D  -D\left(\sum_{i=1}^6 \hat{\beta}_i\right)(G A \bar{G} G)_{c d}^{D-1}(\bar{G} A G \bar{G})_{c d}^{D-1},
\end{aligned}
\end{equation}
where $m = N^{-1} \Tr G$ and 
\begin{equation*}
\begin{gathered}
\alpha_1=\frac{1}{N}(G G A \bar{G} G)_{c d}, \quad\alpha_2=\frac{1}{N}(\bar{G} A G \bar{G} G)_{c d}, \quad\alpha_3=\frac{1}{N}(G \bar{G} A G G)_{c d}, \\
\alpha_4=\frac{1}{N} \operatorname{Tr}(G A \bar{G})(G \bar{G})_{c d}, \quad\alpha_5=\frac{1}{N} G_{c d} \operatorname{Tr}(G A \bar{G} G),
\end{gathered}
\end{equation*}
\begin{equation*}
\begin{gathered}
\beta_1=\frac{1}{N} G_{c c}(G \bar{G} A G G A \bar{G} G)_{d d}, \quad\beta_2=\frac{1}{N}(G A \bar{G} G)_{c d}(G G A \bar{G} G)_{c d}, \\
\beta_3=\frac{1}{N}(G A \bar{G})_{c c}(G \bar{G} G A \bar{G} G)_{d d}, \quad\beta_4=\frac{1}{N}(\bar{G} G)_{c d}(G A \bar{G} G A \bar{G} G)_{c d}, \\
\beta_5=\frac{1}{N}(G A \bar{G} G)_{c c}(G G A \bar{G} G)_{d d}, \quad\beta_6=\frac{1}{N} G_{c d}(G A \bar{G} G G A \bar{G} G)_{c d},
\end{gathered}
\end{equation*}

\begin{equation*}
\begin{gathered}
\hat \beta_1=\frac{1}{N}\bar  G_{c c}(\bar G G A \bar G G A \bar{G} G)_{d d}, \quad \hat \beta_2=\frac{1}{N}(\bar G A G  \bar G)_{c d}(\bar G G A \bar{G} G)_{c d}, \\
\hat \beta_3=\frac{1}{N}(\bar G A G)_{c c}(\bar G G G A \bar{G} G)_{d d}, \quad \hat \beta_4=\frac{1}{N}(G \bar G)_{c d}(\bar G A G G A \bar{G} G)_{c d}, \\
\hat \beta_5=\frac{1}{N}(\bar G A G \bar G)_{c c}(\bar G G A \bar{G} G)_{d d}, \quad \hat \beta_6=\frac{1}{N} \bar G_{c d}(\bar G A G \bar G G A \bar{G} G)_{c d}.
\end{gathered}
\end{equation*}


We used the fact that $G$ is symmetric, with $G_{ij} = G_{ji}$ for all $i,j \in \llbracket 1 , N \rrbracket$, in the above calculations. We also used that $A$ is symmetric by assumption. We now  estimate the terms $\alpha_i,\beta_i, \hat \beta_i$. 
\begin{itemize}
\item By \eqref{eqn:center-iso} and \eqref{eqn:iso}, we have 
\begin{align*}
\begin{split}
    |\alpha_1|=\left|\frac{1}{N}(G G A \overline{G} G)_{cd}\right|\prec N^{-3/2}\eta^{-3}\leq N^{3/2}\Psi^3,
\end{split}
\end{align*}
where we used $1/(N\eta)\leq \Psi$ in the last inequality.
\item 
The same computation used to bound $\alpha_1$ also shows that
\begin{align*}
\begin{split}
    |\alpha_2|=\left|\frac{1}{N}(\overline{G} A G \overline{G} G)_{cd}\right|\prec N^{-3/2}\eta^{-3}\leq N^{3/2}\Psi^3.
\end{split}
\end{align*}
\item By the same argument used to bound $\alpha_1$ and $\alpha_2$, we have
\begin{align*}
\begin{split}
    |\alpha_3|\prec N^{-3/2}\eta^{-3}\leq N^{3/2}\Psi^3.
\end{split}
\end{align*}
\item By the definition of $M$ in \eqref{Mdefinition}, and using $\Tr A = 0$, we have 
\begin{align*}
\begin{split}
M(z,A,\overline z) = A|m_{\mathrm{sc}}(z)|^2, \qquad 
    \Tr(M(z,A,\overline z)I)=\Tr(A|m_{\mathrm{sc}}(z)|^2)=0.
\end{split}
\end{align*}
Combining the previous equation with \eqref{eqn:avg}, we have
\begin{align*}
\begin{split}
    \left|\frac{1}{N}\Tr(GA\overline G)\right|\prec N^{-1}\eta^{-3/2}\leq N^{1/2}\Psi^{3/2}.
\end{split}
\end{align*}
Therefore
\begin{align*}
\begin{split}
    |\alpha_4|=\left|\frac{1}{N} \operatorname{Tr}(G A \overline{G})(G \overline{G})_{cd}\right|\prec N^{3/2}\Psi^{7/2},
\end{split}
\end{align*}
where we used $\left(G\overline G\right)_{cd}\prec N\Psi^2$ (from \eqref{eqn:2-resolvents}).
\item By the definition of $M$ in \eqref{Mdefinition} and \cite[Equation~(3.12)]{CipErdSch22optimal}, we have
\begin{align*}
\begin{split}
    \Tr(M(z,A,\overline z,I,z)I)&=\frac{1}{2\eta}\Tr(M(z,A,\overline z)I-M(z,A,z)I)\\
    &=\frac{1}{2\eta}\Tr(A|m_{\mathrm{sc}}(z)|^2-Am_{\mathrm{sc}}(z)^2)=0.
\end{split}
\end{align*}
Combining this equation with \eqref{eqn:avg}, we have
\begin{align*}
\begin{split}
    \left|\frac{1}{N}\Tr(GA\overline GG)\right|\prec N^{-1}\eta^{-5/2}\leq N^{3/2}\Psi^{5/2}.
\end{split}
\end{align*}
Therefore
\begin{align*}
\begin{split}
    |\alpha_5|=\left|\frac{1}{N} G_{cd} \operatorname{Tr}(G A \overline{G} G)\right|\prec N^{3/2}\Psi^{5/2}.
\end{split}
\end{align*}
\item For $\beta_1$, we cannot directly use \eqref{eqn:iso} and \eqref{eqn:center-iso}, since the estimate on operator norm of the deterministic part 
\begin{equation}\label{detpart}
M(z,I,\overline z,A,z,I,z,A,\overline z,I,z)
\end{equation}
provided by \eqref{eqn:center-iso} 
is not small compared to the estimate of the fluctuations in \eqref{eqn:iso}. Instead, we use the inequality 
\[
\left|\left(G\overline GAGGA\overline GG\right)_{dd}\right|\leq \sum_{k=1}^N\left|G_{dk}\right|\left|\left(\overline GAGGA\overline GG\right)_{kd}\right|
\]
and bound the entries of $(\overline GAGGA\overline GG)_{kd}$ by computing $M(\bar z,A,z,I,z,A,\bar z,I,z)$ explicitly.

By \cite[Equation~(3.12)]{CipErdSch22optimal}, we have
\begin{align*}
\begin{split}
M(\bar z,A,z,I,z,A,\bar z,I,z)=\frac{1}{2\eta}\big(M(\bar z,A,z,I,z,A,\bar z)-M(\bar z,A,z,I,z,A, z)\big).
\end{split}
\end{align*}
By definition, we have
\begin{align}\label{M1}
\begin{split}
    &\quad M(\bar z,A,z,I,z,A,\bar z)\\
    &=\frac{1}{N}\Tr{(A^2)}I \left(m_\circ[\bar z,\bar z]m_\circ[z,z]+m_\circ[\bar z,\bar z]m_\circ[z]^2\right)+A^2\left(m_\circ[z]^2m_\circ[\bar z]^2+m_\circ[z,z]m_\circ[\bar z]^2\right)\\
    &=
    \frac{1}{N}\Tr(A^2)I \left(\m[\bar z,\bar z]\m[z,z]-\m(\bar z)^2\m[ z, z]\right)+ A^2\m(\bar z)^2\m[z,z],
\end{split}
\end{align}
and 
\begin{align}\label{M2}
\begin{split}
    &\quad M(\bar z,A,z,I,z,A, z)\\
    &=\frac{1}{N}\Tr(A^2)I\left(m_\circ[\bar z,z]m_\circ[z,z]+m_\circ[\bar z,z]m_\circ[z]m_\circ[z]\right)+A^2\left(m_\circ[\bar z]m_\circ[z]^3+m_\circ[z,z]m_\circ[\bar z]m_\circ[z]\right)\\
    &=\frac{1}{N}\Tr(A^2)I\left(\m[\bar z,z]\m[z,z]-|\m(z)|^2\m[z,z]\right) +A^2 |\m(z)|^2\m[z,z].
\end{split}
\end{align}
By \cite[Equation (A.1)]{CipErdSch22optimal}, there exists a constant $C>0$ such that 
\begin{equation}\label{e:detpart}
\big|\m(z)\big| \le C, \quad \big|\m[\bar z,z]\big| \le C\eta^{-1}, \quad \big|\m[z,\bar z]\big|\le C\eta^{-1}, \quad \big| \m[z,z]\big| \le C\eta^{-1}.
\end{equation}
Since $\Tr( A^2) I$ is diagonal and $\|A\|\leq 1$, the off-diagonal entries of $\overline GAGGA\overline GG$ are bounded by \[\eta^{-2} + N^{-1/2}\eta^{-7/2}\leq  2 N^{3}\Psi^{7/2}\] with high probability by \eqref{eqn:iso}, where we used $\eta = \eta_\ell \le N^{-1/3}$, 
 \eqref{e:detpart}, and $(A^2)_{ij} \le 1$ (from $\|A^2\| \le 1$)  to neglect the deterministic contribution from \eqref{detpart}. For the diagonal entries, we must account for the $\Tr( A^2) I$ term, and we can estimate these terms by $\eta^{-3} + N^3 \Psi^{7/2}$.  Therefore, by \Cref{thm:iso-single}, \eqref{isotropic}, and \eqref{eqn:psi-bound-specified}, we have
\begin{align*}
\begin{split}
    \left|\left(G\overline GAGGA\overline GG\right)_{dd}\right|\leq \sum_{k=1}^N\left|G_{dk}\right|\left|\left(\overline GAGGA\overline GG\right)_{kd}\right|\prec N^{4}\Psi^{9/2}.
\end{split}
\end{align*}
We conclude that
\begin{align*}
\begin{split}
    |\beta_1|=\left|\frac{1}{N} G_{cc}(G \bar{G} A G G A \bar{G} G)_{dd}\right|\prec N^{3}\Psi^{9/2}.
\end{split}
\end{align*}
\item By \eqref{eqn:center-iso} and \eqref{eqn:iso}, we have
\begin{align*}
\begin{split}
    |\beta_2|=\left|\frac{1}{N}(G A \bar{G} G)_{cd}(G G A \bar{G} G)_{cd}\right|\prec \frac{1}{N}(N^{-1/2}\eta^{-2})(N^{-1/2}\eta^{-3})\leq N^{3}\Psi^{5}.
\end{split}
\end{align*}
\item By \eqref{eqn:center-iso} and \eqref{eqn:iso}, we have
\begin{align*}
\begin{split}
    |\beta_3|=\left|\frac{1}{N}(G A \bar{G})_{cc}(G \bar{G} G A \bar{G} G)_{dd}\right|\prec \frac{1}{N}(N^{-1/2}\eta^{-1})(N^{-1/2}\eta^{-4})\leq N^{3}\Psi^{5}.
\end{split}
\end{align*}
\item By \Cref{thm:iso-single}, \eqref{eqn:center-iso}, and \eqref{eqn:iso}, we have
\begin{align}
\begin{split}
    |\beta_4|=\left|\frac{1}{N}(\bar{G} G)_{cd}(G A \bar{G} GA  \bar{G}  G)_{cd}\right|\prec\frac{1}{N}(N\Psi^2)(\eta^{-3})\leq N^{3}\Psi^{5}.
\end{split}
\end{align}
\item By the same argument  used to bound $\beta_2$, we have
\begin{align*}
\begin{split}
    |\beta_5|=\left|\frac{1}{N}(G A \bar{G} G)_{cc}(G G A \bar{G} G)_{dd}\right|\prec N^{3}\Psi^{5}.
\end{split}
\end{align*}
\item  For $\beta_6$, we follow our approach for $\beta_1$. We have
\begin{align}\label{e:beta6bound}
\left|\left(GA \overline G G GA\overline GG\right)_{dd}\right|\leq \sum_{k=1}^N\left|\left( GA \overline G GGA\overline G\right)_{dk}\right| \left|G_{kd}\right|
\end{align}
We aim to understand the deterministic equivalent $M( z,A,\bar z,I,z, I, z, A,\bar z)$ for $GA \overline G GGA\overline G$. By \cite[Equation~(3.12)]{CipErdSch22optimal},
\[
M( z,A,\bar z,I,z, I, z, A,\bar z)
= \frac{1}{2\eta}
\left(
M( z,A, z, I, z, A,\bar z) - M( z,A,\bar z, I, z, A,\bar z)
\right)
\]
The term $M( z,A, z, I, z, A,\bar z)$ is the transpose of $M( \bar z,A, z, I, z, A, z)$, which was already bounded in \eqref{M2}. Using \cite[Equation~(3.12)]{CipErdSch22optimal} again, we can write 
\[
M( z,A,\bar z, I, z, A,\bar z)
=
\frac{1}{2\eta}\left(
M( z,A, z, A,\bar z)
- M( z,A,\bar z, A,\bar z)
\right).
\]

For any $z_1, z_2,z_3 \in \mathbb{C} \setminus \R$, the identity 
\cite[(2.9)]{CipErdSch22optimal} gives
\begin{equation}\label{e:2traceless}
M(z_1, A, z_2, A , z_3) = \frac{1}{N}\Tr{(A^2)}\big( \m[z_1, z_3] - \m(z_1) m(z_3) \big) \m(z_2) + A^2 \m(z_1) \m(z_2) \m(z_3).
\end{equation}
Using \eqref{e:detpart}, we can find the same bounds for the diagonal entries and off-diagonal entries of $M( z,A,\bar z,I,z, I, z, A,\bar z)$ that we found for the analogous deterministic equivalent in the analysis of $\beta_1$. 
Then using \eqref{e:beta6bound}, we conclude that
\begin{align*}
\begin{split}
    |\beta_6|=\left|\frac{1}{N} G_{cd}(G A \bar{G} G G A \bar{G} G)_{cd}\right|\prec N^{3}\Psi^{9/2}.
\end{split}
\end{align*}
\item  We now the consider the $\hat \beta_i$ terms. For $i$ such that $2\le i \le 5$, $\hat \beta_i$ may be bounded analogously to $\beta_i$ by directly applying \eqref{eqn:center-iso} and \eqref{eqn:iso}. The terms $\hat\beta_1$ and $\hat\beta_6$ are bounded similarly to $\beta_1$ and $\beta_6$. The only substantive changes required come in the analysis of the deterministic equivalents. 

For $\hat \beta_1$, 
\[
\left|\left(\overline G GA \overline GGA\overline GG\right)_{dd}\right|\leq \sum_{k=1}^N\left|\overline G_{dk}\right|\left|\left( GA\overline GGA\overline GG\right)_{kd}\right|,
\]
and the deterministic equivalent of
$ GA\overline GGA\overline GG$ is 
\[
M(z, A, \bar z, I, z, A, \bar z, I ,z) 
= \frac{1}{2\eta} 
\big(
M(z, A , \bar z , I ,z , A , z) 
- M( z ,A,  \bar z, I ,z , A ,\bar z)
\big)
\]
We have
\begin{equation}\label{e:boundedb4}
M(z, A , \bar z , I ,z , A , z) 
= 
\frac{1}{2\eta} 
\big(
M(z, A  ,z , A , z) 
-M(z, A , \bar z  , A , z) 
\big) ,
\end{equation}
and similarly for $M( z ,A,  \bar z, I ,z , A ,\bar z)$. The resulting deterministic equivalents can then be bounded using \eqref{e:2traceless}.

For $\hat \beta_6$, we use
\begin{align}
\left|\left(\overline GA  G \overline G GA\overline GG\right)_{dd}\right|\leq \sum_{k=1}^N\left|\left( \overline GA  G \overline GGA\overline G\right)_{dk}\right| \left|G_{kd}\right|.
\end{align}
The deterministic equivalent of $\overline GA  G \overline GGA\overline G$ is 
\[
M(\bar z , A , z, I, \bar z, I, z, A ,\bar z)
= \frac{1}{2\eta} 
\big(
M(\bar z , A , z, I,  z, A ,\bar z)
-M(\bar z , A , z, I, \bar z, A ,\bar z)
\big).
\]
The first term on the right-hand side was bounded in \eqref{M1}, and the second is the conjugate of $M(z, A , \bar z , I ,z , A , z) $, which was bounded in \eqref{e:boundedb4}. 
\end{itemize}
Let $\epsilon>0$ be a parameter. In \eqref{eqn:r=1organized}, we use Young's inequality twice in the second inequality, with powers $p=2D$ and $q=(2D)/(2D-1)$ for terms containing $\alpha_i$'s and powers $p=D$ and $q=D/(D-1)$ for terms containing $\beta_i$'s, to show that 
\begin{equation}\label{eqn:r=1final}
\begin{aligned}
&\left|\E\left[m |(GA\bar GG)_{cd}|^{2D}+
\sum_k \frac{1+\delta_{c k}}{N} \partial_{c k}(G A \bar{G} G)_{k d}(G A \bar{G} G)_{c d}^{D-1}(\bar{G} A G \bar{G})_{c d}^D\right]\right|\\
&\leqslant  \sum_{i=1}^5 \mathbb{E}\left[\left|\alpha_i\right|\left|(G A \bar{G} G)_{c d}\right|^{2 D-1}\right]+ \sum_{i=1}^6 \mathbb{E}\left[\left|\beta_i\right|\left|(G A \bar{G} G)_{c d}\right|^{2 D-2}\right] 
+  \sum_{i=1}^6 \mathbb{E}\left[\left|\hat \beta_i\right|\left|(G A \bar{G} G)_{c d}\right|^{2 D-2}\right]  \\
&\leqslant  \sum_{i=1}^5 N^{2 D \varepsilon}\E\left[\left|\alpha_i\right|^{2 D}\right]+ \sum_{i=1}^6 N^{D \varepsilon}\E\left[\left|\beta_i\right|^D\right]
+  \sum_{i=1}^6 N^{D \varepsilon}\E\left[|\hat \beta_i|^D\right]  \\
& \quad +\left(5 N^{-(2 D \varepsilon)/ (2 D-1)}+12 N^{-(D \varepsilon)/(D-1)}\right) \mathbb{E}\left[\left|(G A \bar{G} G)_{c d}\right|^{2 D}\right]\\
&\prec \, 20N^{2D\epsilon}N^{3D}\Psi^{9D/2}+20N^{-(2D\epsilon)/(2D-1)}\mathbb{E}\left[\left|(G A \bar{G} G)_{c d}\right|^{2 D}\right].
\end{aligned}
\end{equation}
\item Now we fix $r\geq 2$ in the cumulant expansion. In this case, we use the following relaxation of \eqref{eqn:center-iso} and \eqref{eqn:iso}. Since it is a direct consequence of these inequalities, we omit the proof.
\begin{corollary}[``Coarser'' isotropic local law]\label{thm:coarse}
Fix $k \in \mathbb{N}$,  and $z_1, \ldots, z_{k+1} \in \bm S$. Let $A_1, \ldots, A_k$ be deterministic matrices such that $\left\|A_j\right\| \le 1$, and $m$ of them satisfy  $\Tr A_j =0$ for some $0 \leqslant m \leqslant k$. 
Suppose that 
$\min _j \operatorname{dist}\left(z_j,[-2,2]\right) \leqslant 1$.
Then for arbitrary deterministic vectors $\boldsymbol{x}, \boldsymbol{y}$ such that  $\|\boldsymbol{x}\|+\|\boldsymbol{y}\| \le 2$, we have 
\begin{equation}\label{eqn:coarse}
\left|\left\langle\boldsymbol{x},\left(G_1 A_1 \cdots G_k A_k G_{k+1}\right) \boldsymbol{y}\right\rangle\right| \prec N^{k-m/2}
\end{equation}
with $G_j:=G\left(z_j\right)$.
\end{corollary}
We call a term of the form $\left(G_1 B_1 \cdots G_{s+1}\right)_{i j}$ a block. The effect of one differentiation operator $\partial_{a k}$ on a product of blocks is to increase the number of blocks and the number of $G$ 's by exactly one each, while keeping the number of $A$ factors unchanged. And from \eqref{eqn:coarse}, we know the effect of each traceless matrix $A$ is a contribution of a factor of $N^{-1 / 2}$.

\begin{itemize}
\item Suppose $r<2D-1$. Note that when using the product rule, $\partial_{ck}^r$ can at most operate on $r$ different blocks, there are at least $2D-r-1$ blocks of $(GA\bar GG)_{cd}$ or $(\bar GAG\bar G)_{cd}$ which are unaffected. In view of this, we have
\begin{align}\label{eqn:partial-bound-1}
\begin{split}
    &\left|\partial_{ck}^r\left[\left(GA\bar GG\right)_{kd} (G A \bar{G} G)_{c d}^{D-1}(\bar{G} A G \bar{G})_{c d}^D\right]\right|\\
    \leq&\,\sum_{\substack{r_1+r_2=r\\r_1\leq D-1\\r_2\leq D}}\binom{D}{r_1}\binom{D-1}{r_2}\left|\partial_{ck}^r\left[\left(GA\bar GG\right)_{kd} (G A \bar{G} G)_{c d}^{r_1}(\bar{G} A G \bar{G})_{c d}^{r_2}\right]\right|\left|\left(GA\bar GG\right)_{cd}\right|^{2D-1-r}\\
    \leq &\,D^r\sum_{\substack{r_1+r_2=r\\r_1\leq D-1\\r_2\leq D}}\left|\partial_{ck}^r\left[\left(GA\bar GG\right)_{kd} (G A \bar{G} G)_{c d}^{r_1}(\bar{G} A G \bar{G})_{c d}^{r_2}\right]\right|\cdot\left|\left(GA\bar GG\right)_{cd}\right|^{2D-1-r}.
\end{split}
\end{align}
There are $1+r$ blocks, $3(r+1)$ $G$'s, and $1+r$ $A$'s in
\begin{align*}
 \begin{split}
     \left(GA\bar GG\right)_{kd} (G A \bar{G} G)_{c d}^{r_1}(\bar{G} A G \bar{G})_{c d}^{r_2}.
 \end{split}
 \end{align*}
and after the operation of $\partial_{ck}^r$, there are $2r+1$ blocks, $4r+3$ $G$'s and $1+r$ $A$'s. By Corollary \ref{thm:coarse}, we know
\begin{align}\label{eqn:partial-bound-2}
\begin{split}
    \left|\partial_{ck}^r\left(GA\bar GG\right)_{kd} (G A \bar{G} G)_{c d}^{r_1}(\bar{G} A G \bar{G})_{c d}^{r_2}\right|\prec N^{(4r+2)-(2r)-(1+r)/2 }=N^{3(r+1)/2}.
\end{split}
\end{align}
Combining \eqref{eqn:partial-bound-1} and \eqref{eqn:partial-bound-2}, we deduce that a summand in \eqref{eqn:cumulant-expansion} with $2\leq r<2D-1$ can be bounded by
\begin{align}\label{eqn:r>1final1}
\begin{split}
    &\left|\frac{\kappa_{r+1}}{r ! N^{(r+1) / 2}} \sum_k \mathbb{E}\left[\partial_{c k}^r(G A \bar{G} G)_{k d}(G A \bar{G} G)_{c d}^{D-1}(\bar{G} A G \bar{G})_{c d}^D\right]\right|\\
    &\prec \,D^{r+1}N^{r+2}\E\left[\left|\left(GA\bar GG\right)_{cd}\right|^{2D-1-r}\right]\\
    &\leq \, D^{r+1}N^{(2D\epsilon)/(r+1)+2D(r+2)/(r+1)}+D^{r+1}N^{-(2D\epsilon)/(2D-r-1)}\E\left[\left|\left(GA\bar GG\right)_{cd}\right|^{2D}\right],
\end{split}
\end{align}
for every $\epsilon>0$, where we used Young's inequality in the last step with $p = 2D/( 2D - 1 - r)$ and $q = 2D/(1+r)$. 
\item Suppose $r\geq 2D-1$. There are initially $2D$ blocks, $6D$ $G$'s and $2D$ $A$'s in
\begin{align*}
\begin{split}
    (G A \bar{G} G)_{k d}(G A \bar{G} G)_{c d}^{D-1}(\bar{G} A G \bar{G})_{c d}^D,
\end{split}
\end{align*}
and after the operation of $\partial_{ck}^r$,  there are many terms, each with $2D+r$ blocks, $6D+r$ $G$'s and $2D$ $A$'s. The number of terms depends on $D$. Then by Corollary \ref{thm:coarse}, we have
\begin{align}\label{eqn:partial-bound-3}
\begin{split}
    \left|\partial_{ck}^r(G A \bar{G} G)_{k d}(G A \bar{G} G)_{c d}^{D-1}(\bar{G} A G \bar{G})_{c d}^D\right|\prec C(D) N^{3D}.
\end{split}
\end{align}
Therefore, a summand in \eqref{eqn:cumulant-expansion} with $r\geq 2D-1$ can be bounded by
\begin{align}\label{eqn:r>1final2}
\begin{split}
    \left|\frac{\kappa_{r+1}}{r ! N^{(r+1) / 2}} \sum_k \mathbb{E}\left[\partial_{c k}^r(G A \bar{G} G)_{k d}(G A \bar{G} G)_{c d}^{D-1}(\bar{G} A G \bar{G})_{c d}^D\right]\right| &\prec C(D)N^{3D-(r+1)/2+1}\\&\leq C(D)N^{D+1}.
\end{split}
\end{align}
\end{itemize}
\item Finally, we consider the error term $\Omega$ in \eqref{eqn:cumulant-expansion}. 
Define $H_t^{(k)}$ by 
\begin{align*}
    \left(H_t^{(k)}\right)_{ij}=\begin{cases}
        t,&\text{ if }i=c,j=k,\\
        h_{ij},&\text{ otherwise.}
    \end{cases}
\end{align*}
Let $G_t^{(k)}$ denote the resolvent of $H_t^{(k)}$. 
Fix a parameter $\zeta >0$ and set $Q = N^{-1/2 + \zeta}$. By choosing $\zeta$ small enough, in a way that depends only on $\tau$, a resolvent expansion similar to \eqref{e:error-resolvent} shows that the first claim in \eqref{eqn:2-resolvents} also holds for $G_t^{(k)}$,
for every $i,j \in \llbracket 1, N \rrbracket$, uniformly in the choice of $k \in \llbracket 1, N \rrbracket$ and $|t| \le Q$. In particular, we have
\begin{equation}\label{e:perturbedll}
\sup_{i,j\le N}  \sup_{|t| \le Q} \big|(G^{(k)}_t)_{ij}\big| \prec 1.
\end{equation}

By Lemma \ref{lem:cumulant-expansion}, 
\begin{equation}\label{eqn:error-term}
    \Omega\leq C_{D}\sum_k  K(k),
\end{equation}
where
\[
K(k) = 
\E\left[|h_{ck}|^{20D+2} \right]\sup_{|t| \le Q } 
J(t) 
 + \E\left[|h_{ck}|^{20D+2}  \one\{|h_{ck}| > Q \}\right]\sup_{t \in \R } J(t) 
\]
and 
\[
J(t) 
= \E\left[\left|\partial_{ck}^{20D+1}(G_t^{(k)}A\bar G_t^{(k)}G_t^{(k)})_{kd}(G_t^{(k)}A\bar G_t^{(k)}G_t^{(k)})_{cd}^{D-1}(\bar G_t^{(k)}AG_t^{(k)}\bar G_t^{(k)})_{cd}^D \right|\right].
\]
By the trivial bound $\|G_t^{(k)}\|\leq \eta^{-1}\leq N$ from \eqref{eqn:absolute-bound-G}, $|A_{ij}| \le 1$, and counting the number of terms generated by taking derivatives in the definition of $J(t)$, we have (as a crude bound) 
\begin{equation}\label{e:edit02}
\sup_{t \in \R } J(t) \le C(D) N^{80(D+1)}. 
\end{equation}
Further, by \eqref{finite-moment} and Markov's inequality, we have for every $M >0$ that 
\begin{equation}\label{e:edit1}
\P\big( |N^{1/2} h_{ck}| > N^\zeta  \big) \le \mu_M N^{- M \zeta}.
\end{equation}\label{e:edit12}
By taking $M$ sufficiently large, in a way that depends on $\zeta$, we can enforce that $N^{-M\zeta} \le N^{-160(D+1)}$. Then together with the Cauchy--Schwartz inequality, \eqref{e:edit02} and \eqref{e:edit12} imply that the second term in the definition of $K(k)$ is negligible. 

Next, for the first term in $K(k)$, we note that for any indices $a,b$, we have 
\begin{equation}\label{e:summation}
(G_t^{(k)}A\bar G_t^{(k)}G_t^{(k)})_{ab}
= \sum_{i,j,k =1}^N 
(G_t^{(k)})_{ai} A_{ij} \bar (G_t^{(k)})_{jl} (G_t^{(k)})_{lb}.
\end{equation}
Then by \eqref{e:perturbedll}, the recalling that $|A_{ij}| \le 1$, 
\[
\big|
(G_t^{(k)}A\bar G_t^{(k)}G_t^{(k)})_{ab}
\big|  \le C^3.
\]
When $\partial_{ck}$ acts on some $(G_t^{(k)}A\bar G_t^{(k)}G_t^{(k)})_{ab}$, it increases the number of entries of $G_t^{(k)}$ in the summation, but it does not add a new summation index. We conclude that 
\[
\sup_{|t| \le Q } 
J(t)  \le C(D) N^{6D}.
\]
Using $|h_{ck}|^{20D+2} \prec N^{-10D -1}$, we find that $K(k) \le C(D) N^{-2D-1}$, and hence
\begin{equation}\label{eqn:error-bound}
   \Omega \le C(D) N^{-2D}.
\end{equation}
\end{enumerate}

Combining \eqref{eqn:multi-resolvent-moment}, \eqref{eqn:4th}, \eqref{eqn:cumulant-expansion}, \eqref{eqn:r=1final}, \eqref{eqn:r>1final1}, \eqref{eqn:r>1final2} and \eqref{eqn:error-bound}, we have
\begin{align*}
&\left|\E\left[(z+m)|(GA\bar GG)_{cd}|^{2D}\right]\right|\\
&\prec \, C(D)N^{2D\epsilon}N^{3D}\Psi^{9D/2}+C(D)N^{-(2D\epsilon)/(2D-1)}\mathbb{E}\left[\left|(G A \bar{G} G)_{c d}\right|^{2 D}\right].
\end{align*}
Using the local law $|m-\m|\prec\Psi$ \cite[Theorem 2.6]{benaych2016lectures}, we have
\begin{equation*}
\begin{aligned}
    &(z+\m) \left|\E\left[|(GA\bar GG)_{cd}|^{2D}\right]\right|\\
&\prec \, C(D)N^{2D\epsilon}N^{3D}\Psi^{9D/2}+C(D)N^{-(2D\epsilon)/(2D-1)}\mathbb{E}\left[\left|(G A \bar{G} G)_{c d}\right|^{2 D}\right]+\Psi\cdot \E\left[|(GA\bar GG)_{cd}|^{2D}\right]
\end{aligned}
\end{equation*}
Since $z+\m=1/\m$ (recall \eqref{definingequation}), which is bounded away from $0$ (by \cite[Equation (3.2)]{benaych2016lectures}), and $\epsilon>0$ is arbitrary, we have
\begin{align*}
\begin{split}
\mathbb{E}\left[\left|(G A \bar{G} G)_{c d}\right|^{2 D}\right]\prec N^{3D}\Psi^{9D/2}.
\end{split}
\end{align*}
Since $D>1$ is arbitrary, we have by Markov's inequality that 
\begin{align*}
\begin{split}
|(GA\bar GG)_{cd}|\prec  N^{3/2}\Psi^{9/4}.
\end{split}
\end{align*}
This completes the proof of \eqref{eqn:3-resolvents-traceless}.
\end{proof}

\section{Joint Distribution of Eigenvectors} \label{a:joint}
In this section, we briefly explain how to generalize our univariate result to the following multivariate one.

\begin{theorem}\label{theo:multivariable}
    Let $H$ be a Wigner matrix and fix $\tau\in(0,1)$ and $k\in\mathbb{N}$. Then there exists $\delta=\delta(\tau)\in(0,1)$ such that the following holds. Let $A_1,\dots, A_k\in\R^{N\times N}$ be deterministic sequences of traceless matrices such that $A_i=A_i^*$, $\Vert A_i\Vert\leqslant 1$, and $\mathrm{Tr}(A_i^2)\geqslant N^{1-\delta}$. Let $\ell_1,\dots,\ell_k\in\llbracket 1,N^{1-\tau}\rrbracket \cup \llbracket N-N^{1-\tau},N\rrbracket$ be deterministic sequences of indices and let $\bm{u}_{\ell_1},\dots, \bm{u}_{\ell_k}$ be the corresponding sequences of $\ell^2$-normalized eigenvectors of $H$. Then 
    \[
    \left(\sqrt{\frac{\beta N^2}{2\mathrm{Tr}(A_1^2)}}\langle \bm{u}_{\ell_1},A_1 \bm{u}_{\ell_1}\rangle,
    \dots,
    \sqrt{\frac{\beta N^2}{2\mathrm{Tr}(A_k^2)}}\langle \bm{u}_{\ell_1},A_k \bm{u}_{\ell_k}\rangle
    \right)
    \rightarrow (\mathcal{N}_1,\dots,\mathcal{N}_k)
    \]
    where $\mathcal{N}_i$ is a family of i.i.d standard Gaussian random variables and the convergence is in distribution. We take $\beta=1$ if $H$ is real symmetric and $\beta =2 $ if it is complex Hermitian.
\end{theorem}
The overall proof is the same, as we regularize each observable in the same way but generalize each lemma to a multivariate version. The difference between this generalization and the initial proof is merely notational. Using the same notation as in \eqref{eq:notpl}, we have the following analogue of Lemma~\ref{lem:regularized-observable}, which compares our multidimensional observable to a regularized version.

\begin{lemma}
    Let $H$ be a Wigner matrix, and let the parameters $\epsilon_1>0$ and $\delta_1,\ldots,\delta_5$ be chosen as in Definition \ref{para}.
    Let $g:\R^k\rightarrow\R$ be a compactly supported smooth function.
    Then there exists a constant $c(\tau,g,k)>0$ such that 
    \begin{align*}
    \begin{split}
        \left| \E \Big[g\big(\widehat{p}_{\ell_1}(A_1),\dots,\widehat{p}_{\ell_k}(A_k)\big)\Big]-\E \big[g\big(v_{\ell_1}(\bm \delta, A_1),\dots,v_{\ell_k}(\bm \delta, A_k)\big) \Big] \right|\le c^{-1} N^{-c}.
    \end{split}
    \end{align*}
    \end{lemma}
It is important to note that our choice of parameters $\bm{\delta}$ is independent of the matrices $A_i$ and the indices $\ell_i$. Indeed, they only depend on $\varepsilon_0$ from Proposition \ref{prop:rigidity}, which is uniform in the indices of the eigenvalues, and $\tau$. 

The proof of Theorem \ref{theo:multivariable} then finishes by the same resolvent comparison argument from Subsection \ref{subsec:resolventcomparison} by considering  functions $g$ of $k$ variables.

\bibliographystyle{amsplain}
\bibliography{project_QUE.bib}

\end{document}